\pretocmd{\chapter}{\addtocontents{toc}{\protect\addvspace{15\p@}}}{}{}
\pretocmd{\section}{\addtocontents{toc}{\protect\addvspace{3\p@}}}{}{}
\def\@tocline#1#2#3#4#5#6#7{\relax
  \ifnum #1>\c@tocdepth 
  \else
    \par \addpenalty\@secpenalty\addvspace{#2}%
    \begingroup \hyphenpenalty\@M
    \@ifempty{#4}{%
      \@tempdima\csname r@tocindent\number#1\endcsname\relax
    }{%
      \@tempdima#4\relax
    }%
    \parindent\z@ \leftskip#3\relax \advance\leftskip\@tempdima\relax
    \rightskip\@pnumwidth plus4em \parfillskip-\@pnumwidth
    #5\leavevmode\hskip-\@tempdima
      \ifcase #1
       \or\or \hskip .5em \or \hskip 1em \else \hskip 1.5em \fi%
      #6\nobreak\relax
    \dotfill\hbox to\@pnumwidth{\@tocpagenum{#7}}\par
    \nobreak
    \endgroup
  \fi}
\newcommand{\G}{\mathbb{G}}
\newcommand{\C}{\mathbb{C}}
\newcommand{\N}{\mathbb{N}}
\newcommand{\Z}{\mathbb{Z}}
\newcommand{\R}{\mathbb{R}}
\newcommand{\Q}{\mathbb{Q}}
\newcommand{\F}{\mathbb{F}}
\newcommand{\X}{\mathbb{X}}
\newcommand{\cG}{\mathcal{G}}
\newcommand{\Gal}{\operatorname{Gal}}
\newcommand{\rk}{\operatorname{rk}}
\newcommand{\ad}{\operatorname{ad}}
\renewcommand{\sc}{\operatorname{sc}}
\renewcommand{\ss}{\operatorname{ss}}
\newcommand{\der}{\operatorname{der}}
\newcommand{\Aut}{\operatorname{Aut}}
\newcommand{\End}{\operatorname{End}}
\newcommand{\GL}{\mathrm{GL}}
\newcommand{\SL}{\mathrm{SL}}
\newcommand{\PSL}{\mathrm{PSL}}
\newcommand{\bB}{\mathbf{B}}
\newcommand{\bC}{\mathbf{C}}
\newcommand{\bG}{\mathbf{G}}
\newcommand{\bH}{\mathbf{H}}
\newcommand{\bM}{\mathbf{M}}
\newcommand{\bN}{\mathbf{N}}
\newcommand{\bS}{\mathbf{S}}
\newcommand{\bT}{\mathbf{T}}
\newcommand{\bY}{\mathbf{Y}}
\newcommand{\LT}{\mathfrak{g}}
\newtheorem{thm}{Theorem}[section]
\newtheorem{cor}[thm]{Corollary}
\newtheorem{corollary}[thm]{Corollary}
\newtheorem{prop}[thm]{Proposition}
\newtheorem{lemma}[thm]{Lemma}
\theoremstyle{definition}
\newtheorem{definition}[thm]{Definition}
\newtheorem{remark}[thm]{Remark}
\newenvironment{customthm}[1]
  {\innercustomthm}
  {\endinnercustomthm}
\newenvironment{customcor}[1]
  {\innercustomcor}
  {\endinnercustomcor}
\newenvironment{customhyp}[1]
  {\innercustomhyp}
  {\endinnercustomhyp}
\begin{document}

\title{On the rationality of certain type A Galois representations}

\author{Chun Yin Hui}

\thanks{The present project was supported by the National Research Fund, Luxembourg, and cofunded under the Marie Curie Actions of the European Commission (FP7-COFUND)}
\subjclass[2010]{11F80, 14F20, 20G30}
\keywords{Galois representations, the Mumford-Tate conjecture, type A representations}

\begin{abstract} 
Let $X$ be a complete smooth variety defined over a 
number field $K$ and $i$ an integer. 
The absolute Galois group $\Gal_K$ of $K$ acts on the $i$th \'etale cohomology group 
$H^i_{\mathrm{\acute{e}t}}(X_{\bar K},\Q_\ell)$ for all primes $\ell$, producing a system of $\ell$-adic representations $\{\Phi_\ell\}_\ell$. 
The conjectures of Grothendieck, Tate, and Mumford-Tate predict that the identity component 
of the algebraic monodromy group of $\Phi_\ell$ 
 admits a common reductive $\Q$-form for all $\ell$
 if $X$ is projective. 
Denote by $\Gamma_\ell$ and $\bG_\ell$ respectively the monodromy group and the algebraic
monodromy group of $\Phi_\ell^{\ss}$, the semisimplification of $\Phi_\ell$. 
Assuming that $\bG_{\ell_0}$ satisfies a group theoretic condition for some prime $\ell_0$ (Hypothesis \ref{A}), 
we construct a connected quasi-split $\Q$-reductive group $\bG_\Q$ which is a common $\Q$-form of 
$\bG_\ell^\circ$ for all sufficiently large $\ell$. 
Let $\mathbf{G}_\Q^{\sc}$ be the universal cover of the derived group of $\mathbf{G}_\Q$.
As an application, we prove that the monodromy group $\Gamma_\ell$ is big in the sense that 
$\Gamma_\ell^{\sc}\cong \bG_\Q^{\sc}(\Z_\ell)$ for all sufficiently large $\ell$.
\end{abstract}

\maketitle
{\tableofcontents}

\section{Introduction}

Let $X$ be a complete, smooth variety defined over a number field $K$ and $i$ an integer belonging to $[0,2\dim X]$. 
The absolute Galois group $\Gal_K:=\Gal(\bar K/K)$ acts on the $i$th $\ell$-adic \'etale cohomology group 
$V_\ell:=H^i_{\mathrm{\acute{e}t}}(X_{\bar K},\Q_\ell)$ for every ordinary prime $\ell$. 
We obtain by Deligne \cite{De74} a \emph{strictly compatible system} of 
$\ell$-adic representations
\begin{equation}\label{1}
\{\Phi_\ell:\Gal_K\to \GL(V_\ell)\}_\ell
\end{equation}
 in the sense of Serre \cite{Se98}. 
The \emph{algebraic monodromy group at $\ell$} denoted by $\mathbf{M}_\ell$, 
is the Zariski closure of the \emph{monodromy group} $\Phi_\ell(\Gal_K)$ in $\GL_{V_\ell}$. Let $\bM_\ell^\circ$ be the identity component of $\bM_\ell$ and $k$ the dimension of $V_\ell$ for all $\ell$.

Choose an embedding $K\hookrightarrow\C$.
If $X$ is projective, then $X(\C)$ is a compact K$\ddot{\mathrm{a}}$hler manifold and the singular cohomology group 
$V:=H^i(X(\C),\Q)$ carries a $\Q$-Hodge structure. 
Denote the Mumford-Tate group of $V$ by $\mathrm{MT}(V)$, which is 
a connected reductive subgroup of $\GL_{V}$. 
The celebrated conjectures of Grothendieck, Tate,
 and Mumford-Tate imply that 
\begin{equation}\label{conj}
\mathbf{M}_\ell^\circ\cong\mathrm{MT}(V)\times_\Q\Q_\ell
\end{equation}
via the comparison isomorphism between $V_\ell$ and $V\otimes_\Q\Q_\ell$ 
for all $\ell$ (see \cite{Ta65}, \cite[$\mathsection3$]{Se94}).
This is equivalent to saying that the inclusion $\mathrm{MT}(V)\subset \GL_{V}$ 
is a $\Q$-form of $\mathbf{M}_\ell^\circ\subset \GL_{V_\ell}$ for all $\ell$. 
It follows that 
the \emph{absolute root datum} of $\bM_\ell^\circ$, i.e., the \emph{root datum} of $\bM_\ell^\circ\times_{\Q_\ell}\bar\Q_\ell$, is 
independent of $\ell$.

Since $\Phi_\ell$ is conjecturally semisimple 
 and our methods only handle semisimple representations,
we denote, for all $\ell$, the \emph{semisimplification} of $\Phi_\ell$ by $\Phi_\ell^{\ss}$. 
We say that $\{\Phi_\ell^{\ss}\}_\ell$ is the semisimplification of the system (\ref{1}).
Let $\Gamma_\ell$ and $\bG_\ell$ be respectively the monodromy group (Galois image) and the algebraic monodromy group of $\Phi_\ell^{\ss}$. 
Let $\mathbf{U}_\ell$ be the unipotent radical of $\bM_\ell$. The following short exact sequence holds,
$$1\to \mathbf{U}_\ell\to \bM_\ell\to \bG_\ell\to 1.$$
Since we are only concerned about $\bG_\ell^\circ$ and there exists a finite extension $K^\mathrm{conn}$ of $K$
which is the smallest extension 
such the Zariski closure of $\Phi_\ell^{\ss}(\Gal_{K^{\mathrm{conn}}})$ in $\GL_{V_\ell^{\ss}}$ 
is $\bG_\ell^\circ$ for all $\ell$ \cite[$\mathsection2.2.3$]{Se84},
we once and for all assume that the field $K$ is chosen large enough such that $\bG_\ell$ is connected for all $\ell$
\footnote{Larsen-Pink presented a purely field theoretic construction
of $K^\mathrm{conn}$ in \cite{LP97}}. 

We embed $\Q_\ell$ in $\C$ and
let $\mathfrak{g}_\ell$ be 
the Lie algebra of $\bG_\ell\times_{\Q_\ell}\C$ for all $\ell$. 
The representation $\Phi_\ell^{\ss}$ and the algebraic monodromy group $\bG_\ell$ are said to be \emph{of type A} if 
every simple factor of $\mathfrak{g}_\ell$ is equal to $A_n:=\mathfrak{sl}_{n+1,\C}$ for some $n$.
This definition is independent of 
the choice of embedding $\Q_\ell\hookrightarrow\C$ and is equivalent to the one in \cite{HL16}.
Type A representations provide supporting evidence for (\ref{conj}).
For example, we showed in \cite{HL16} that for all sufficiently large $\ell$, $\bG_\ell$ is quasi-split if it is of type A.
Also, it follows from the
the main theorems of \cite{Hui13} that the
complex reductive Lie algebra $\mathfrak{g}_\ell$ 
is independent of $\ell$ if the following hypothesis is satisfied (see $\mathsection$\ref{l-adic}).

\begin{customhyp}{A}\label{A}
There exists a prime $\ell_0$ such that the following conditions hold for $\mathfrak{g}_{\ell_0}$:
\begin{enumerate}[($i$)]
\item $\mathfrak{g}_{\ell_0}$ has at most one $A_4$ simple factor;
\item if $\mathfrak{q}$ is a simple factor of $\mathfrak{g}_{\ell_0}$, then $\mathfrak{q}$ is of type $A_n$ for some $n\in \N\backslash\{1,2,3,5,7,8\}$.
\end{enumerate}
\end{customhyp}

\noindent\textbf{Example:} $\mathfrak{g}_{\ell_0}=A_4\oplus A_6\oplus A_9\oplus A_9\oplus Z$, where $Z$ is abelian.\\

This paper is motivated by the conjectural isomorphism (\ref{conj}) for all $\ell$. 
Suppose $X$ is an abelian variety. Then the semisimplicity of (\ref{1}) is established by Faltings \cite{Fa83} and (\ref{conj}) is the Mumford-Tate conjecture for abelian varieties \cite{Mu66}, which has been studied by many people (see \cite[$\mathsection5$]{Pi98} for details).
For a general system, Larsen-Pink has proved
the existence of 
a common $\Q$-form  of  $\bG_\ell\subset\GL_{V_\ell^{\ss}}$ for $\ell$ belonging to a set of primes of Dirichlet density $1$  if (\ref{1}) is absolutely irreducible and satisfies one of the following conditions \cite[Proposition 9.10]{LP92}:
\begin{enumerate}[($i$)]
\item the \emph{splitting field} of (\ref{1}) (see \cite[$\mathsection8.1$]{LP92}) is $\Q$;
\item the dimension of representations is divisible
neither by $3^{15}$ nor by the fifth power of an even integer strictly greater than $2$.
\end{enumerate}

\begin{definition}\label{newgp}
Let $\bG$ be a connected reductive group  defined over a field $F$ and $\Gamma$ a subgroup of $\bG(F)$. 
Denote by $\bG^{\ss}$ the quotient of $\bG$ by its radical and by $\Gamma^{\ss}$ the image of $\Gamma$ under the natural morphism
 $$\pi^{\ss}:\bG(F)\to\bG^{\ss}(F).$$ 
Denote by $\bG^{\der}$ the derived group of $\bG$, by $\bG^{\sc}$ the universal covering of $\bG^{\der}$, by $\pi^{\sc}$ the natural morphism
$$\pi^{\sc}:\bG^{\sc}(F)\to\bG^{\der}(F),$$
and by $\Gamma^{\sc}$ the pre-image of $\Gamma^{\ss}$ under $\pi^{\ss}\circ\pi^{\sc}$.
\end{definition}

The monodromy group $\Gamma_\ell$ is a compact $\ell$-adic Lie subgroup of $\bG_\ell(\Q_\ell)$. 
Identify $\bG_\ell$ as a connected reductive subgroup of $\GL_{k,\Q_\ell}$. The main results of this article
are as follows.

\begin{thm}\label{main}
Let $\{\Phi_\ell\}_\ell$ be the system (\ref{1}) and $\bG_\ell$ the connected algebraic monodromy group of $\Phi_\ell^{\ss}$ for all $\ell$.
Suppose Hypothesis \ref{A} is satisfied.
Then the following statements hold.
\begin{enumerate}[(i)]
\item The conjugacy class of $\bG_\ell\times_{\Q_\ell}\C$ in $\GL_{k,\C}$ is independent of $\ell$
\footnote{The reductive subgroups $\bG_{\ell_1}\times_{\Q_{\ell_1}}\C$ and $\bG_{\ell_2}\times_{\Q_{\ell_2}}\C$ are conjugate in
$\GL_{k,\C}$ for all distinct primes $\ell_1$ and $\ell_2$.}.
\item There exists 
a connected quasi-split reductive group $\bG_\Q$ defined over $\Q$ such that for all sufficiently 
large $\ell$,
\begin{equation*}
\bG_\ell\cong\bG_\Q\times_\Q\Q_\ell.
\end{equation*}
\end{enumerate}
\end{thm}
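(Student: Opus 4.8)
The plan is to establish (i) by upgrading the known $\ell$-independence of the formal bi-character of the system and of the Lie algebra $\mathfrak{g}_\ell$ to $\ell$-independence of the full conjugacy class, and then to prove (ii) by manufacturing $\bG_\Q$ out of the resulting $\ell$-independent absolute root datum equipped with the correct $*$-action, exploiting that $\bG_\ell$ is quasi-split for large $\ell$.

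For (i): since $\{\Phi_\ell\}_\ell$ is strictly compatible, for almost all finite places $v$ of $K$ the characteristic polynomial of $\Phi_\ell^{\ss}(\mathrm{Frob}_v)$ lies in $\Q[T]$ and is independent of $\ell$, and the same holds for the tensor constructions $V_\ell^{\otimes a}\otimes(V_\ell^\vee)^{\otimes b}$; combined with Serre's Frobenius tori and the Chebotarev density theorem this gives that the formal bi-character of $\bG_\ell\subset\GL_{V_\ell}$ is independent of $\ell$ (see \S\ref{FCBC}). By the main theorems of \cite{Hu13} and Hypothesis \ref{A}, the complex reductive Lie algebra $\mathfrak{g}_\ell$ of $\bG_\ell\times_{\Q_\ell}\C$ is also independent of $\ell$, of type A with every simple factor an $A_n$ for $n\notin\{1,2,3,5,7,8\}$ and at most one $A_4$. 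Now the conjugacy class of $\bG_\ell\times_{\Q_\ell}\C\subset\GL_{k,\C}$ is equivalent to the isomorphism class of the pair $(\bG_\ell\times_{\Q_\ell}\C,\,V_\ell\otimes_{\Q_\ell}\C)$, and to reconstruct the latter from the formal bi-character and $\mathfrak{g}_\ell$ one must read off the highest weights of the irreducible constituents $W_{\lambda_j}$ of the semisimple part and the central characters $\chi_j$ from the weight multiset. This last reconstruction is exactly where the restrictions in Hypothesis \ref{A} --- ranks avoiding $\{1,2,3,5,7,8\}$, and $A_4$ at most once --- are used, as they rule out the sporadic coincidences between type-$A$ representations sharing a weight multiset (the low-rank isogenies, $\mathrm{Sym}^2$ versus the standard representation, the $A_5$ and $A_7$ phenomena, and the $A_4$ doubling), after which the argument proceeds as in the exceptional-case analysis of \cite{LP92}. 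Since both the formal bi-character and $\mathfrak{g}_\ell$ are $\ell$-independent, the conjugacy class is too, where $k=\dim_{\Q_\ell}V_\ell$ is the $i$th Betti number.

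For (ii): by (i) the absolute root datum $\Psi$ of $\bG_\ell$ is independent of $\ell$, and by \cite{HL14a} there is $\ell_1$ such that $\bG_\ell$ is quasi-split over $\Q_\ell$ for all $\ell\ge\ell_1$. A quasi-split connected reductive $\Q_\ell$-group with absolute root datum $\Psi$ is determined up to isomorphism by its $*$-action $\rho_\ell\colon\Gal_{\Q_\ell}\to\operatorname{Out}(\Psi)$, and for type A the group $\operatorname{Out}(\Psi)$ is assembled from the order-two diagram automorphisms of the $A_n$ simple factors, the permutations of mutually isomorphic simple factors, and the compatible automorphisms of the central torus datum. I would show that $\rho_\ell$ is controlled by $\ell$-independent arithmetic of the compatible system, defined over one fixed finite extension $L/\Q$: the permutation of simple factors is recorded in the $\Gal_K$-action on the set of isomorphism classes of irreducible constituents of $\Phi_\ell^{\ss}$, the self-duality pattern of the constituents $W_{\lambda_j}\otimes\chi_j$ is forced by Poincar\'e duality, and the central datum is pinned down by the rational, $\ell$-independent Frobenius weights. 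These assemble into a single homomorphism $\rho\colon\Gal_\Q\to\operatorname{Out}(\Psi)$ factoring through $\Gal(L/\Q)$ with $\rho|_{\Gal_{\Q_\ell}}=\rho_\ell$ for $\ell\ge\ell_1$. Taking $\bG_\Q$ to be the quasi-split $\Q$-group with absolute root datum $\Psi$ and $*$-action $\rho$, its base change $\bG_\Q\times_\Q\Q_\ell$ is quasi-split with the same absolute root datum and the same $*$-action as $\bG_\ell$, hence isomorphic to it, for all $\ell\ge\ell_1$.

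The main obstacle is the passage in (ii) from the local $*$-actions of the individual groups $\bG_\ell$ over $\Q_\ell$ to one global homomorphism $\rho\colon\Gal_\Q\to\operatorname{Out}(\Psi)$: one must check that the permutation-and-duality combinatorics attached to the (possibly reducible) system is genuinely rational and $\ell$-independent and that it matches the $\Q_\ell$-rational structure of $\bG_\ell$ for \emph{every} large $\ell$ rather than merely for a set of density one; it is exactly here that the quasi-splitness from \cite{HL14a} is indispensable, since it kills the Brauer-type obstructions that confine the Larsen-Pink construction to density one. Within (i), the corresponding delicate point is the representation-theoretic rigidity making the formal bi-character determine the conjugacy class, which is where Hypothesis \ref{A} plays an essential role.
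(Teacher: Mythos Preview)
Your outline for (i) is essentially the paper's approach. The paper makes precise the step you describe as ``reading off the highest weights'' by showing (Proposition~\ref{rootcomp}, Proposition~\ref{rdss}, Theorem~\ref{rdr}) that, with respect to the common formal bi-character $\bT_\C^{\ss}\subset\bT_\C$, the roots of $\bG_\ell\times_{\Q_\ell}\C$ and of $\bG_{\ell'}\times_{\Q_{\ell'}}\C$ are literally the \emph{same} subset of the character lattice of $\bT_\C$; hence the root data coincide and Corollary~\ref{similar} follows. So part (i) is fine.

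For (ii) your overall strategy coincides with the paper's---classify quasi-split forms by the $*$-action in $\mathrm{Out}(\Psi)$ and then match the local actions to a single global one---but there is a genuine gap in how you manufacture the global $\rho:\Gal_\Q\to\mathrm{Out}(\Psi)$. The three ingredients you propose do not do the job. The permutation of the geometric simple factors of $\bG_\ell^{\der}$ is a $\Gal_{\Q_\ell}$-action on the absolute Dynkin diagram, not a $\Gal_K$-action on irreducible constituents of $\Phi_\ell^{\ss}$; these are different objects and there is no evident way to extract the former from the latter. Poincar\'e duality relates $H^i$ to $H^{2\dim X-i}$ and carries no information about which individual $A_n$-factor of $\bG_\ell$ carries the nontrivial diagram automorphism (i.e.\ is an $\SU$ rather than an $\SL$). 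And ``rational Frobenius weights'' by themselves do not glue the local $\rho_\ell$ into a single homomorphism of $\Gal_\Q$. You correctly flag this passage as the main obstacle, but the fix you sketch does not close it.

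The paper's device for this step is the $\Q$-form of the formal bi-character coming from a Frobenius torus (Theorem~\ref{bicharQ}): one fixes a $\Q$-subtorus $\bT_\Q\subset\GL_{k,\Q}$ which, for every large $\ell$, is a maximal torus of $\bG_\ell$ after base change. The cocycle $(c_\sigma)$ expressing $\bT_\Q$ as a twist of the split torus $\bT_\Q^{\mathrm{sp}}$ is already a \emph{global} map $\Gal_\Q\to\Aut_{\bar\Q}\bT_\Q^{\mathrm{sp}}$. The point (Proposition~\ref{prop5} together with the proof in \S\ref{s5}) is that the equality of root data from (i) forces the restriction of $(c_\sigma)$ to every $\Gal_{\Q_\ell}$ to land in the subgroup $\Omega_{\bar\Q_\ell}$ of torus automorphisms that extend to automorphisms of $(\bG_{\Q_\ell}^{\mathrm{sp}},\bT_{\Q_\ell}^{\mathrm{sp}})$; a Chebotarev argument then shows the global $(c_\sigma)$ lands in $\Omega_{\bar\Q}$, and its image in $H^1(\Q,\mathrm{Out}_{\bar\Q}\bG_\Q^{\mathrm{sp}})$ is the desired global $*$-action. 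In short, the Frobenius torus---not Poincar\'e duality or representation-theoretic combinatorics---is what globalizes the local $*$-actions, and this is the idea missing from your proposal.
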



\begin{corollary}\label{newcor}
Let $\cG^{\sc}$ be
a semisimple group scheme  over $\Z[\frac{1}{N}]$ for some $N$ whose generic fiber is $\mathbf{G}_\Q^{\sc}$, where $\bG_\Q$ is in Theorem \ref{main}.
For all sufficiently large $\ell$, 
we have 
$$\Gamma_\ell^{\sc}\cong\cG^{\sc}(\Z_\ell).$$
\end{corollary}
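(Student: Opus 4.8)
The plan is to leverage Theorem \ref{main}(ii) to replace $\bG_\ell$ by $\bG_\Q\times_\Q\Q_\ell$ for all sufficiently large $\ell$, so that the analysis of $\Gamma_\ell^{\sc}$ can be carried out over a fixed $\Q$-group. Fix $N$ and a semisimple group scheme $\cG^{\sc}$ over $\Z[\frac1N]$ with generic fiber $\bG_\Q^{\sc}$; such a model exists and has good reduction at all $\ell\nmid N$. For each prime $\ell\nmid N$ of sufficiently large size (large enough that Theorem \ref{main}(ii) applies), we have a Galois image $\Gamma_\ell\subset\bG_\ell(\Q_\ell)\cong\bG_\Q(\Q_\ell)$, and $\Gamma_\ell^{\sc}$ is its pre-image in $\bG_\Q^{\sc}(\Q_\ell)$ under $\pi^{\ss}\circ\pi^{\sc}$ as in Definition \ref{newgp}. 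The goal is to pin this subgroup down as exactly $\cG^{\sc}(\Z_\ell)$, the hyperspecial maximal compact subgroup attached to the model $\cG^{\sc}$.

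The key steps, in order, are as follows. First, I would recall the general principle --- essentially the form of the main theorem in the prequel \cite{HL14a} or the adelic methods of Larsen-Pink --- that for a strictly compatible system the Galois image $\Gamma_\ell$ is a compact subgroup of $\bG_\ell(\Q_\ell)$ which, after replacing $\bG_\ell$ by its image group, is ``as large as possible'' for $\ell$ large: more precisely that $\Gamma_\ell^{\ss}$ contains a hyperspecial maximal compact subgroup of $\bG_\ell^{\ss}(\Q_\ell)$, hence (being compact) equals one. Since $\bG_\ell^{\ss}\cong\bG_\Q^{\ss}\times_\Q\Q_\ell$ and $\bG_\Q^{\ss}$ (quotient by the radical of the quasi-split group $\bG_\Q$) acquires good reduction over $\Z[\frac1N]$ after possibly enlarging $N$, this hyperspecial subgroup is conjugate to $\cG^{\ss}(\Z_\ell)$ for the corresponding model $\cG^{\ss}$; because hyperspecial subgroups form a single conjugacy class under $\bG_\Q^{\ss}(\Q_\ell)$ when $\bG_\Q$ is quasi-split and $\ell$ is large, I may arrange $\Gamma_\ell^{\ss}=\cG^{\ss}(\Z_\ell)$ after conjugation. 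Second, I would take pre-images: since $\pi^{\ss}\circ\pi^{\sc}:\bG_\Q^{\sc}\to\bG_\Q^{\ss}$ is a central isogeny of semisimple groups with good reduction away from $N$, for $\ell\nmid N$ its reduction mod $\ell$ is again a central isogeny, and the pre-image of $\cG^{\ss}(\Z_\ell)$ under $(\pi^{\ss}\circ\pi^{\sc})(\Q_\ell)$ is exactly $\cG^{\sc}(\Z_\ell)$ --- this is the standard compatibility of integral points under isogenies with good reduction, using Hensel's lemma / smoothness of the isogeny to show surjectivity on $\Z_\ell$-points is controlled by the kernel scheme, whose $\Z_\ell$-points are absorbed on both sides. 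Hence $\Gamma_\ell^{\sc}\cong\cG^{\sc}(\Z_\ell)$.

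The main obstacle is the first step: establishing that $\Gamma_\ell^{\ss}$ is genuinely hyperspecial maximal in $\bG_\ell^{\ss}(\Q_\ell)$ for all large $\ell$, rather than merely open of bounded index. This is where one must import the deep input --- the bounded-index and maximality results for images of compatible systems (the integral/adelic refinements of Serre, Larsen-Pink, and the author's earlier work), combined with the fact that for $\ell$ large the index of $\Gamma_\ell^{\ss}$ in a hyperspecial subgroup is prime to $\ell$ and, by a formal-character/trace argument, actually trivial. One also needs to check that the conjugation making $\Gamma_\ell^{\ss}=\cG^{\ss}(\Z_\ell)$ can be chosen compatibly with the fixed model, which again uses the uniqueness up to conjugacy of hyperspecial subgroups in the quasi-split case for $\ell$ unramified in $\bG_\Q$. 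Once these are in place, the descent from $\bG_\ell$ to $\bG_\Q$ via Theorem \ref{main}(ii), the good-reduction model, and the isogeny pre-image computation are all routine, so I would present those briefly and concentrate the write-up on the maximality input.
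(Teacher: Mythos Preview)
Your strategy assembles the right ingredients---Theorem~\ref{main}(ii) to pass to $\bG_\Q$, the maximality input from \cite{HL14a}, and the good-reduction model $\cG^{\sc}$---but you route the argument through $\Gamma_\ell^{\ss}$ and then pull back along the isogeny, whereas the paper applies the maximality result directly at the simply connected level. The point is that Theorem~\ref{max} (the main theorem of \cite{HL14a}, recalled in $\mathsection\ref{2.6}$) already asserts that $\Gamma_\ell^{\sc}$ \emph{itself} is a hyperspecial maximal compact subgroup of $\bG_\ell^{\sc}(\Q_\ell)$ for $\ell\gg1$; it does not speak about $\Gamma_\ell^{\ss}$. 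Thus the ``main obstacle'' you isolate (hyperspeciality of $\Gamma_\ell^{\ss}$) is a statement not supplied by the available input, and the subsequent pre-image computation under $\pi^{\ss}\circ\pi^{\sc}$ becomes unnecessary.

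The paper's proof is correspondingly short: Theorem~\ref{main}(ii) gives $\bG_\ell^{\sc}\cong\bG_\Q^{\sc}\times_\Q\Q_\ell$, so $\Gamma_\ell^{\sc}$ is hyperspecial in $\bG_\Q^{\sc}(\Q_\ell)\cong\cG^{\sc}(\Q_\ell)$; then one invokes two facts from \cite{Ti79}: all hyperspecial maximal compact subgroups of this group are abstractly isomorphic (\cite[$\mathsection2.5$]{Ti79}, conjugacy under $\bG^{\ad}(\Q_\ell)$), and $\cG^{\sc}(\Z_\ell)$ is one of them (\cite[$\mathsection3.9.1$]{Ti79}). Since the corollary asks only for an abstract isomorphism, no conjugation inside a fixed ambient group is required; your effort to ``arrange $\Gamma_\ell^{\ss}=\cG^{\ss}(\Z_\ell)$ after conjugation''---together with the slightly inaccurate claim that hyperspecials are conjugate under $\bG^{\ss}(\Q_\ell)$ rather than under the adjoint group---is therefore beside the point.
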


\vspace{.1in}
Corollary \ref{newcor} can be applied to study the mod $\ell$ Galois images.
For any finite group $\bar\Gamma$, simple Lie type $\mathfrak{g}$ (e.g., $A_n$, $B_n$, $C_n$, $D_n$, $E_6$,...), and prime $\ell\geq5$, 
we defined in \cite{Hui15,HL16}(see $\mathsection$\ref{mod l}) the $\mathfrak{g}$-type $\ell$-rank $\rk_\ell^{\mathfrak{g}}\bar\Gamma$ of $\bar\Gamma$,
which measures the number of finite simple groups of type $\mathfrak{g}$ in characteristic $\ell$
in the composition series of $\bar\Gamma$. For example, 
\begin{equation*}
\rk^{\mathfrak{g}}_\ell\SL_{n+1}(\F_{\ell^f}) :=\left\{ \begin{array}{lll}
 fn &\mbox{if}\hspace{.1in} \mathfrak{g}=A_n,\\
 0 &\mbox{otherwise.}
\end{array}\right.
\end{equation*}

We studied the mod $\ell$ Galois image $\bar\Gamma_\ell:=\phi_\ell(\Gal_K)$ \emph{arising from \'etale cohomology} 
\footnote{Since $\Phi_\ell(\Gal_K)$ is compact, it fixes some $\Z_\ell$-lattice $L_\ell$ of $V_\ell$. Then $\phi_\ell$ 
is defined to be the semisimplification of the mod $\ell$ reduction of $\Phi_\ell$ with respect to $L_\ell$.}
for all sufficiently large $\ell$ in \cite{Hui15} and showed 
that $\rk_\ell^{A_n}\bar\Gamma_\ell$
is independent of $\ell\gg0$ if 
$n\in\N\backslash\{1,2,3,4,5,7,8\}$ (see $\mathsection$\ref{mod l}). However, the function $\rk_\ell^{A_n}$ cannot distinguish 
between the \emph{Chevalley group} $A_n(\ell^f)$ and the \emph{Steinberg group} ${}^2\!A_n(\ell^{2f})$ for $n\geq 2$ since their $A_n$-type $\ell$-ranks are both $fn$.
For example, suppose 
$A_6$ is the only simple factor of $\mathfrak{g}_{\ell_0}$,
then $\bar\Gamma_\ell$ has only one composition factor of Lie type in characteristic $\ell$
for $\ell\gg0$, which is
either the Chevalley group $A_6(\ell)$ or the Steinberg group ${}^2\!A_6(\ell^{2})$. 
One cannot tell which one occurs for large $\ell$ from the results in \cite{Hui15}.
Nevertheless,
Corollary \ref{cor} below provides a precise description of 
the composition factors of Lie type in characteristic $\ell$ of $\bar\Gamma_\ell$
for $\ell\gg0$ if Hypothesis \ref{A} is satisfied.

\begin{definition}\label{Lie}
For any prime $\ell\geq5$ and finite group $\bar\Gamma$, denote by $\mathrm{Lie}_\ell\bar\Gamma$ the multiset of 
the composition factors
of Lie type in characteristic $\ell$ of $\bar\Gamma$.
\end{definition}

\begin{cor}\label{cor}
Let $\cG^{\der}$ be
a semisimple group scheme  over $\Z[\frac{1}{N}]$ for some $N$ whose generic fiber is $\mathbf{G}_\Q^{\der}$, 
where $\bG_\Q$ is in Theorem \ref{main}.
For all sufficiently large $\ell$, 
we have 
$$\mathrm{Lie}_\ell\bar\Gamma_\ell=\mathrm{Lie}_\ell\cG^{\der}(\F_\ell).$$
\end{cor}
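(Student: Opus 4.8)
The plan is to identify $\mathrm{Lie}_\ell\bar\Gamma_\ell$ with $\mathrm{Lie}_\ell$ of the reduction modulo $\ell$ of the compact group $\Gamma_\ell$, for $\ell\gg1$, and then to evaluate the latter using the rigidity supplied by Theorem \ref{main} and Corollary \ref{newcor}. First I would pass from $\Phi_\ell$ to its semisimplification: by the Brauer--Nesbitt theorem the semisimplification of the mod $\ell$ reduction of $\Phi_\ell$ is, up to isomorphism, independent of the chosen $\Gal_K$-stable lattice and unchanged if $\Phi_\ell$ is replaced by $\Phi_\ell^{\ss}$, so $\bar\Gamma_\ell$ is the image of $\Gal_K$ under the semisimplification of the reduction of $\Phi_\ell^{\ss}$ on some $\Gamma_\ell$-stable lattice $L$. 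Write $\Delta_\ell$ for the image of $\Gamma_\ell$ in $\GL(L/\ell L)$ before semisimplifying; then $\bar\Gamma_\ell$ is the quotient of $\Delta_\ell$ by the (normal, $\ell$-power order) subgroup of elements acting unipotently on $L/\ell L$. As a finite $\ell$-group carries no composition factor of Lie type in characteristic $\ell$ and such composition factors are additive along short exact sequences (Jordan--H\"older), we get $\mathrm{Lie}_\ell\bar\Gamma_\ell=\mathrm{Lie}_\ell\Delta_\ell$; likewise $\ker(\Gamma_\ell\twoheadrightarrow\Delta_\ell)$ is pro-$\ell$, so $\Delta_\ell$ can be studied through the algebraic structure of $\Gamma_\ell$.

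For $\ell\gg1$ Theorem \ref{main} gives $\bG_\ell\cong\bG_\Q\times_\Q\Q_\ell$ with $\bG_\Q$ quasi-split, hence unramified at every large $\ell$; I would therefore arrange (after conjugation) that $\Gamma_\ell\subseteq\cG(\Z_\ell)$ for a reductive $\Z_\ell$-model $\cG$ of $\bG_\Q$ compatible with the isogenies $\bG_\Q^{\sc}\to\bG_\Q^{\der}\hookrightarrow\bG_\Q\to\bG_\Q^{\ss}$, with $\Delta_\ell$ the image of $\Gamma_\ell$ under $\cG(\Z_\ell)\twoheadrightarrow\cG(\F_\ell)$; this matching of the cohomological lattice with an integral model for $\ell\gg1$ is where the earlier mod $\ell$ reduction techniques (those behind the $\ell$-independence of $\rk^{A_n}_\ell\bar\Gamma_\ell$ recalled above) are invoked. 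Now I would thread through the natural quotients. The kernel of $\Gamma_\ell\to\Gamma_\ell^{\ss}$ is a compact subgroup of the radical torus of $\bG_\Q$, hence abelian; and $\Gamma_\ell^{\ss}$ contains the image of $\Gamma_\ell^{\sc}$ with quotient embedding into $\coker\!\big(\bG_\Q^{\sc}(\Q_\ell)\to\bG_\Q^{\ss}(\Q_\ell)\big)\hookrightarrow H^1(\Q_\ell,\mu)$, $\mu$ being the finite commutative kernel of $\bG_\Q^{\sc}\to\bG_\Q^{\ss}$, so this quotient too is abelian; the analogous statements hold after reducing modulo $\ell$, with the radical torus, $\mu$, and $H^1(\F_\ell,\mu)$ all abelian. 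By Corollary \ref{newcor}, $\Gamma_\ell^{\sc}\cong\cG^{\sc}(\Z_\ell)$, which surjects onto $\cG^{\sc}(\F_\ell)$ by smoothness. As abelian groups and $\ell$-groups contribute nothing to $\mathrm{Lie}_\ell$, additivity yields
\begin{equation*}
\mathrm{Lie}_\ell\bar\Gamma_\ell=\mathrm{Lie}_\ell\Delta_\ell=\mathrm{Lie}_\ell\cG^{\sc}(\F_\ell),
\end{equation*}
and since the central isogeny $\cG^{\sc}\to\cG^{\der}$ has finite commutative kernel $\nu$, which is \'etale for $\ell\gg1$ and has $\nu(\F_\ell)$, $H^1(\F_\ell,\nu)$ abelian, we conclude $\mathrm{Lie}_\ell\cG^{\sc}(\F_\ell)=\mathrm{Lie}_\ell\cG^{\der}(\F_\ell)$, as asserted.

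The main obstacle will be uniformity and bookkeeping rather than one hard step. One has to track \emph{multiplicities} of composition factors, so each reduction must be organised as a short exact sequence to which additivity of $\mathrm{Lie}_\ell$ applies, together with the elementary vanishing of $\mathrm{Lie}_\ell$ on abelian groups and on groups of $\ell$-power order. One also has to know that, for $\ell\gg1$, the lattice-theoretic reduction $\Delta_\ell$ genuinely coincides with reduction inside a hyperspecial model of $\bG_\Q$ — this is exactly where Theorem \ref{main} is indispensable and where the existing mod $\ell$ comparison results must be brought in. Finally, every commutative group occurring above as a kernel or cokernel has order bounded independently of $\ell$, because the bound is read off from the root datum of $\bG_\Q$, which is $\ell$-independent by Theorem \ref{main}(i); this uniformity is what makes ``for all sufficiently large $\ell$'' do its job.
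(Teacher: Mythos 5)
Your proposal follows essentially the same route as the paper: reduce the question to $\Gamma_\ell^{\sc}$ via a chain of pro-solvable and abelian kernels/cokernels, invoke Corollary \ref{newcor} to identify $\Gamma_\ell^{\sc}$ with $\cG^{\sc}(\Z_\ell)$, reduce mod $\ell$, and pass from $\cG^{\sc}$ to $\cG^{\der}$ via the central isogeny. The paper organizes exactly this chain as $\mathrm{Lie}_\ell\bar\Gamma_\ell=\mathrm{Lie}_\ell\Gamma_\ell=\mathrm{Lie}_\ell\Gamma_\ell^{\sc}=\mathrm{Lie}_\ell\cG^{\sc}(\Z_\ell)=\mathrm{Lie}_\ell\cG^{\sc}(\F_\ell)=\mathrm{Lie}_\ell\cG^{\der}(\F_\ell)$, using Definition \ref{cf} (well-definedness of $\mathrm{Lie}_\ell$ for compact $\ell$-adic groups, since any two mod-$\ell$ reductions differ by a pro-solvable kernel) and Lemma \ref{lem9} (pro-solvable kernel of $\pi^{\ss}_\ell$, abelian kernel and cokernel of $\pi^{\ss}_\ell\circ\pi^{\sc}_\ell$).

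One part of your write-up over-reaches: you assert that after conjugation $\Gamma_\ell\subseteq\cG(\Z_\ell)$ for a reductive $\Z_\ell$-model $\cG$ of $\bG_\Q$, and use this to identify the lattice reduction $\Delta_\ell$ with the image of $\Gamma_\ell$ in $\cG(\F_\ell)$. This is stronger than what Theorem \ref{max} and Corollary \ref{newcor} give — they only control $\Gamma_\ell^{\sc}$, asserting that it is hyperspecial, hence isomorphic to $\cG^{\sc}(\Z_\ell)$; nothing is asserted about $\Gamma_\ell$ itself sitting inside a hyperspecial maximal compact of $\bG_\ell(\Q_\ell)$, and in general a compact subgroup need not lie in a hyperspecial one. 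The paper avoids this entirely: it never reduces $\Gamma_\ell$ modulo $\ell$ inside a group-scheme model; it only needs that $\mathrm{Lie}_\ell$ of the compact group $\Gamma_\ell$ is well-defined (Definition \ref{cf}), that $\mathrm{Lie}_\ell\Gamma_\ell=\mathrm{Lie}_\ell\Gamma_\ell^{\sc}$ at the $\ell$-adic level (Lemma \ref{lem9}), and that the reduction map $\cG^{\sc}(\Z_\ell)\twoheadrightarrow\cG^{\sc}(\F_\ell)$ has pro-solvable kernel. So you should drop the claim about $\Gamma_\ell$ and run the chain through $\Gamma_\ell^{\sc}$ before reducing mod $\ell$; with that fix the argument closes exactly as the paper's does.
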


\begin{remark} For the $A_6$ case discussed above, Corollary \ref{cor} implies (by studying the $\Gal_\Q$ action on the Dynkin diagram of  $\mathbf{G}_\Q^{\der}$) either the Chevalley group $A_6(\ell)$ occurs for $\ell\gg0$ or there 
is a quadratic extension $F$ of $\Q$ such that for $\ell\gg0$, the Chevalley group $A_6(\ell)$ occurs for $\ell$ that splits completely and 
the Steinberg group ${}^2\!A_6(\ell^{2})$ occurs for $\ell$ that is inert. Such a congruence is useful to the inverse Galois problem and appears, for example, 
in the computation of the geometric $\Z/\ell\Z$-monodromy of the moduli space of trielliptic curves \cite[Theorem 3.8]{AP07}.\end{remark}

Let us sketch the proof of Theorem \ref{main}. 
For any connected reductive subgroup $\bG$ of $\GL_{k,F}$, we introduce the notion of the
\emph{formal bi-characters} of $\bG$ in Definition \ref{formal2}.
Since $\bG_\ell$ is a connected reductive subgroup of $\GL_{k,\Q_\ell}$ for all $\ell$,
the method in \cite[$\mathsection3$]{Hui13} shows that the isomorphism class of the formal bi-characters of $\bG_\ell\times_{\Q_\ell}\C\subset\GL_{k,\C}$ (for any embedding $\Q_\ell\hookrightarrow\C$) is independent of $\ell$ (Theorem \ref{bicharC}). 
By the method of \emph{Serre's Frobenius tori} ($\mathsection\ref{FT}$),
one can pick for each large $\ell$ a formal bi-character of $\bG_\ell$ such that these formal bi-characters admit
a common $\Q$-form up to conjugation (Theorem \ref{bicharQ}).
Under Hypothesis \ref{A}, the invariance of both the formal bi-characters of $\bG_\ell\times_{\Q_\ell}\C$ 
and \emph{the positions of roots in the weight space} ($\mathsection\ref{s3.1}$)
imply by Theorem \ref{rdr} that: 
\begin{enumerate}[($i$)]
\item the root datum of $\bG_\ell\times_{\Q_\ell}\C$ is independent of $\ell$;
\item the conjugacy class of $\bG_\ell\times_{\Q_\ell}\C$ in $\GL_{k,\C}$ is independent of $\ell$.
\end{enumerate}
The assertion (ii) above is exactly Theorem \ref{main}(i).
We also know that $\bG_\ell$ is quasi-split for $\ell\gg0$ by Hypothesis \ref{A} and Corollary \ref{maxcor}.
The techniques on forms of reductive groups that are essential to the proof of Theorem \ref{main}(ii)
are reviewed in $\mathsection\ref{s4}$.
By exploiting these techniques and all the $\ell$-independence results above, we prove the existence of a common $\Q$-form $\bG_\Q$ for $\{\bG_\ell\}_{\ell\gg0}$ 
in $\mathsection\ref{s5}$, which completes Theorem \ref{main}(ii).
  
\begin{remark} 
The $\ell$-independence theorem of this paper and the results on the invariance of roots in $\mathsection3$ allow us to 
study the decomposition of compatible system of Galois representations arising from geometry into a direct sum of an
abelian and a non-abelian compatible 
subsystems 
\cite{Hui16}.\end{remark}

\section{Some results on $\ell$-adic representations}

\subsection{Strictly compatible systems} 

Let $k$ be a positive integer, $K$ a number field, and $\bar K$ an algebraic closure of $K$.
Denote by $\Gal_K$ the absolute Galois group of $K$ and by 
$\Sigma_K$ (resp. $\Sigma_{\bar K}$) 
the set of non-Archimedean valuations of $K$ (resp. $\bar K$). For each prime number $\ell$, let $\Psi_\ell$
be a $k$-dimensional, continuous $\ell$-adic representation of $K$,
$$\Psi_\ell:\Gal_K\to \GL_k(\Q_\ell).$$
For $v\in\Sigma_K$, let $\bar v\in\Sigma_{\bar K}$ divide $v$. Denote by $D_{\bar v}$ and $I_{\bar v}$ the decomposition subgroup and inertia subgroup of 
$\Gal_K$ at $\bar v$ respectively. Let $k(v)$ be the residue field of $K$ completed with respect to $v$.
Since $D_{\bar v}/I_{\bar v}\cong\Gal_{k(v)}$ naturally, denote by $\mathrm{Frob}_{\bar v}\in D_{\bar v}/I_{\bar v}$ the element corresponding to the inverse of the Frobenius automorphism of $\overline{k(v)}/k(v)$ and call it a
\emph{Frobenius element}.
Suppose $\bar v$ and $\bar v'$ both divide $v\in\Sigma_K$. Then the two pairs $I_{\bar v}\subset D_{\bar v}$ and
$I_{\bar v'}\subset D_{\bar v'}$ of closed subgroups are conjugate in $\Gal_K$.
The representation $\Psi_\ell$ is said to be \emph{unramified} at $v$ if $\Psi_\ell(I_{\bar v})$ is trivial for some $\bar v$ dividing $v$.
In this case, it makes sense to define the image of Frobenius element $\Psi_\ell(\mathrm{Frob}_{\bar v})$.

\begin{definition}\label{comsys} The system of $\ell$-adic representations $\{\Psi_\ell\}_\ell$
is said to be \textit{strictly compatible} if the following conditions are satisfied.
\begin{enumerate}
\item[($i$)] There is a finite subset $S\subset\Sigma_K$ such that $\Psi_\ell$ is unramified outside $S_\ell:=S\cup\{v\in\Sigma_K: v|\ell \}$ for all $\ell$.
\item[($ii$)] For all primes $\ell_1\neq\ell_2$ and $\bar{v}\in\Sigma_{\bar{K}}$ dividing $v\in\Sigma_K\backslash(S_{\ell_1}\cup S_{\ell_2})$, the characteristic polynomials of $\Psi_{\ell_1}(\mathrm{Frob}_{\bar{v}})$ and $\Psi_{\ell_2}(\mathrm{Frob}_{\bar{v}})$ are equal
to some polynomial $P_v(x)\in\mathbb{Q}[x]$ depending only on $v$.
\end{enumerate}
\end{definition}

\noindent\textbf{Examples of strictly compatible systems}. 
\begin{enumerate}[($i$)]
\item The semisimplification $\{\Psi_\ell^{\ss}\}_\ell$ of the strictly compatible system $\{\Psi_\ell\}_\ell$. Note that the characteristic polynomials of $\Psi_{\ell}(\mathrm{Frob}_{\bar{v}})$ and $\Psi_{\ell}^{\ss}(\mathrm{Frob}_{\bar{v}})$ are equal.
\item The direct sum of two strictly compatible systems.
\item The system of abelian $\ell$-adic representations arising from a $\Q$-representation 
of the \emph{Serre group} $\bS_\mathfrak{m}$ \cite{Se98}. 
\item The system of $\ell$-adic representations arising from the $\ell$-adic Tate modules of an abelian variety $A$ defined over $K$.
\item The system of $\ell$-adic representations arising from \'etale cohomology as in (\ref{1}). 
\end{enumerate}

\subsection{Formal character and bi-character}  \label{FCBC}

Let $F$ be a field and $\bG$ a connected reductive subgroup of $\GL_{k,F}$.
Since $\bG$ is connected, the derived subgroup $\bG^{\der}$ is semisimple.

\begin{definition}\label{formal1}
Let $\bT$ be a maximal torus of $\bG$. Then the natural inclusion $\bT\subset\GL_{k,F}$ is said to be 
a \emph{formal character} of $\bG\subset\GL_{k,F}$ or of $\bG$ for simplicity.
Two formal characters $\bT_1\subset\GL_{k,F}$ and $\bT_2\subset\GL_{k,F}$ of respectively $\bG_1$ and $\bG_2$  are isomorphic if $\bT_1$ and $\bT_2$ are conjugate in $\GL_{k,F}$, i.e., conjugate by an element of $\GL_k(F)$.
\end{definition}

\begin{definition}\label{formal2}
Let $\bT$ be a maximal torus of $\bG$ and $\bT^{\ss}:=(\bT\cap \bG^{\der})^\circ$ a maximal torus of $\bG^{\der}$. Then the chain $\bT^{\ss}\subset\bT\subset\GL_{k,F}$ is said to be 
a \emph{formal bi-character} of $\bG\subset\GL_{k,F}$ or of $\bG$ for simplicity.
Two formal bi-characters $\bT_1^{\ss}\subset\bT_1\subset\GL_{k,F}$ and $\bT_2^{\ss}\subset\bT_2\subset\GL_{k,F}$ of respectively $\bG_1$ and $\bG_2$ are isomorphic if the two pairs $\bT_1^{\ss}\subset \bT_1$ and $\bT_2^{\ss}\subset\bT_2$ are conjugate in $\GL_{k,F}$, 
 i.e., conjugate by an element of $\GL_k(F)$.
\end{definition}

\begin{remark}\label{formalrem}
If $F$ is algebraically closed, then all formal characters (formal bi-characters) of $\bG\subset\GL_{k,F}$ are isomorphic since all maximal tori of $\bG$ are conjugate in $\bG$.
\end{remark}

\subsection{Frobenius tori}\label{FT}

Let $\{\Psi_\ell\}_\ell$ be a semisimple, $k$-dimensional, strictly compatible system of $\ell$-adic representations.
Denote by $\bG_\ell$ the algebraic monodromy group at $\ell$, i.e., the Zariski closure of $\Psi_\ell(\Gal_K)$ in $\GL_{k,\Q_\ell}$. Assume 
 $\bG_\ell$ is a connected reductive subgroup of $\GL_{k,\Q_\ell}$ for all $\ell$.
Since $\Psi_\ell$ is unramified outside $S_\ell$ (Definition \ref{comsys}), the image of the set of \emph{Frobenius elements} 
$$\mathscr{F}_\ell:=\{\Psi_\ell(\mathrm{Frob}_{\bar v}): \bar v~\mathrm{divides}~v\notin S_\ell\}$$
is dense in the Galois image $\Psi_\ell(\Gal_K)$ by the Cheboterav density theorem. 
It is also Zariski dense in $\bG_\ell$ by the definition of $\bG_\ell$.
Definition \ref{Frob}, Theorem \ref{FMT}, and its corollaries below are due to Serre \cite{Se81}.  

\begin{definition}\label{Frob}
For each $\bar v$ dividing $v\notin S_\ell$, the \emph{Frobenius torus} $\bT_{\bar v,\ell}$ is defined as the identity component 
of the smallest algebraic subgroup of $\bG_\ell$ containing the semisimple part of $\Psi_\ell(\mathrm{Frob}_{\bar v})$.
\end{definition}

The following theorem uses the terminology of Larsen-Pink \cite[Theorem 1.2]{LP97}, see also \cite[Theorem 3.7]{Chi92}.
 
\begin{thm}\label{FMT}(Serre)
Let $\ell$ be a prime and $v\in\Sigma_K$. 
Denote the characteristic polynomial of $\Psi_\ell(\mathrm{Frob}_{\bar v})\in \mathscr{F}_\ell$ by $P_v(x)\in\Q[x]$, 
which is independent of $\ell$. 
Denote by $p_v$ the characteristic of $v$ and by 
$q_v$ the cardinality of the residue field of $v$. 
Suppose the following conditions are satisfied for the roots $\alpha$ of $P_v(x)$:
\begin{enumerate}[(a)]
\item the absolute values of $\alpha$ in all complex embeddings are equal;
\item $\alpha$ is a unit at any non-Archimedean place not above $p_v$;
\item for any non-Archimedean valuation $w$ of $\bar \Q$ such that $w(p_v)>0$, the ratio $w(\alpha)/w(q_v)$
belongs to a finite subset of $\Q$ that is independent of $\bar v$.
\end{enumerate}
Then there exists a proper closed subvariety $\bY$ of $\bG_\ell$ such that $\bT_{\bar v,\ell}$
is a maximal torus of $\bG_\ell$ whenever $\Psi_\ell(\mathrm{Frob}_{\bar v})\in\bG_\ell\backslash \bY$.
\end{thm}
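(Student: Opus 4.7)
The plan is to reduce the assertion to a question about multiplicative independence of Frobenius eigenvalues, and to use hypotheses (a)--(c) to control the lattice of such relations. For a semisimple $g\in\GL_k(\bar\Q_\ell)$ with eigenvalue multiset $(\alpha_1,\dots,\alpha_k)$, the Zariski closure $\overline{\langle g\rangle}$ is a diagonalizable group whose identity component has dimension $k-\rank L_g$, where $L_g:=\{(n_i)\in\Z^k : \prod\alpha_i^{n_i}=1\}$ is the lattice of multiplicative relations. Writing $f_{\bar v}:=\Psi_\ell(\mathrm{Frob}_{\bar v})$, the torus $\bH_{\bar v,\ell}$ is therefore a maximal torus of $\bG_\ell$ precisely when $\rank L_{f_{\bar v}}$ attains the minimum value $k-\rank\bG_\ell$ over all $g\in\bG_\ell$.

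First I would verify that the locus $U\subset\bG_\ell$ consisting of semisimple elements $g$ for which $\overline{\langle g\rangle}^\circ$ is a maximal torus of $\bG_\ell$ is Zariski-open: this follows from the lower semi-continuity of $g\mapsto\dim\overline{\langle g_{\ss}\rangle}^\circ$ together with the openness of the regular semisimple locus of $\bG_\ell$. Taking $\bY:=\bG_\ell\setminus U$, it then suffices to prove $\bY$ is proper, equivalently to exhibit a \emph{single} Frobenius element whose Frobenius torus is maximal; since $\mathscr{F}_\ell$ is Zariski-dense in $\bG_\ell$ by Chebotarev and the definition of $\bG_\ell$, such one good Frobenius automatically spreads into a Zariski-dense subset of $U$.

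The heart of the proof, and the place where hypotheses (a)--(c) do real work, is producing this one good Frobenius. Condition (a) says the $\alpha_i$ are Weil numbers, yielding archimedean constraints on $L_{f_{\bar v}}$ via equality of absolute values; condition (b) rules out any non-archimedean contribution from places not above $p_v$; and condition (c) restricts the normalized $w$-adic valuation vectors $\bigl(w(\alpha_i)/w(q_v)\bigr)_i$, for $w\mid p_v$, to a finite subset of $\Q^k$ that is uniform in $\bar v$. Any $(n_i)\in L_{f_{\bar v}}$ must be orthogonal to each such vector, so $L_{f_{\bar v}}$ is forced into one of finitely many sublattices of $\Z^k$ indexed by the ``Newton shape'' of $\bar v$. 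The main obstacle is to verify that at least one such shape realizes a regular cocharacter of a maximal torus $\bT$ of $\bG_\ell$, forcing $\rank L_{f_{\bar v}}=k-\rank\bG_\ell$. For this I would argue that, as $\bar v$ varies, the Newton vectors must span $X_*(\bT)\otimes\Q$ — otherwise the Frobenii would be confined to a proper reductive subgroup of $\bG_\ell$ of smaller rank, contradicting the Zariski-density of $\mathscr{F}_\ell$ in $\bG_\ell$ — so a generic choice of $\bar v$ realizes a regular cocharacter and places $f_{\bar v}$ in $U$, completing the construction of $\bY$.
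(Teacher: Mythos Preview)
The paper does not prove this theorem; it is attributed to Serre and cited to \cite[Theorem~1.2]{LP97} and \cite[Theorem~3.7]{Ch92}, so there is no argument in the paper against which to compare. That said, your proposal contains a genuine gap.

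The assertion that $U=\{g\in\bG_\ell:\overline{\langle g_{\ss}\rangle}^\circ\ \text{is a maximal torus of }\bG_\ell\}$ is Zariski-open is false. The map $g\mapsto\dim\overline{\langle g_{\ss}\rangle}^\circ$ is \emph{not} lower semi-continuous: already for $\bG_\ell=\G_m$, every root of unity $g$ gives $\dim\overline{\langle g\rangle}^\circ=0$, while non-torsion $g$ gives dimension $1$, and the roots of unity are Zariski-dense. Hence $U$ is not open, $\bG_\ell\setminus U$ is not closed, and the strategy of exhibiting a single good Frobenius and invoking openness collapses. The subvariety $\bY$ in the statement is \emph{not} the complement of $U$; it has to be constructed directly from the arithmetic hypotheses.

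You have correctly isolated the right ingredients --- the relation lattice $L_{f_{\bar v}}$ and the fact that (a)--(c) force the normalized valuation vectors $(w(\alpha_i)/w(q_v))_i$ into a fixed finite subset of $\Q^k$ --- but they play a different logical role in Serre's argument. One shows that the set of lattices $L_{f_{\bar v}}$ that can occur, as $\bar v$ ranges over \emph{all} Frobenii, is finite: the saturation $L_{f_{\bar v}}^{\mathrm{sat}}$ equals the kernel of a valuation matrix having boundedly many rows, each drawn from a fixed finite set (this uses (a)--(c)); and the index $[L_{f_{\bar v}}^{\mathrm{sat}}:L_{f_{\bar v}}]$ is bounded because any root of unity in $\Q(\alpha_1,\dots,\alpha_k)$ has order bounded in terms of $k$. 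For each of the finitely many ``bad'' lattices $L\supsetneq\Lambda$ one chooses a nontrivial $\beta\in L/\Lambda\subset X^*(\bT)$; every Frobenius with non-maximal Frobenius torus then lands, after conjugation into $\bT$, inside the proper closed subgroup $\ker\beta\subsetneq\bT$. One takes $\bY$ to be the closure of the union of the $\bG_\ell$-conjugates of these finitely many kernels, together with the non-regular-semisimple locus; a dimension count shows $\bY\subsetneq\bG_\ell$. Thus (a)--(c) are used to bound the bad locus itself, not merely to locate one good Frobenius.
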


Since the Frobenius tori $\bT_{\bar v,\ell}$ and $\bT_{\bar v',\ell}$ are conjugate whenever $\bar v|_K=v=\bar v'|_K$, the following corollary follows directly.

\begin{cor}\label{FMTC1} \cite[Corollary 3.8]{Chi92}, \cite[Corollary 1.4]{LP97}
The following subset of $\Sigma_K$ is of Dirichlet density $1$,
$$\{v\in\Sigma_K\backslash S_\ell: \bT_{\bar v,\ell}~\mathrm{is~a~maximal~torus~of~} \bG_\ell\}.$$
\end{cor}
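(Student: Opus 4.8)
The plan is to deduce the corollary from Theorem \ref{FMT} together with the Chebotarev density theorem. First I would verify that the hypotheses (a), (b), (c) of Theorem \ref{FMT} are automatically satisfied for the system (\ref{1}) arising from \'etale cohomology, since this is the setting in which we want to apply the conclusion: condition (a) is the Weil conjecture (all eigenvalues of $\Phi_\ell(\mathrm{Frob}_{\bar v})$ are $q_v$-Weil numbers of weight $i$, proved by Deligne), condition (b) is integrality of the eigenvalues away from $p_v$, and condition (c) follows from the fact that the $\ell$-adic valuations of the eigenvalues are controlled by the Newton polygon and lie in $\frac{1}{[K_v:\Q_{p_v}]}\Z \cap [0,i]$, a fixed finite set. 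Thus Theorem \ref{FMT} applies and yields a proper closed subvariety $\bY \subsetneq \bG_\ell$ with the stated property.

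Next I would translate the geometric statement into a density statement. The Galois image $\Phi_\ell^{\ss}(\Gal_K)$ is Zariski dense in $\bG_\ell$ by definition, so it is not contained in the proper closed subset $\bY(\Q_\ell)$; hence the complement $\bG_\ell(\Q_\ell) \setminus \bY(\Q_\ell)$ meets the image in a nonempty (in fact Zariski dense) subset. I want to show moreover that the set of $v$ with $\Phi_\ell^{\ss}(\mathrm{Frob}_{\bar v}) \in \bG_\ell \setminus \bY$ has Dirichlet density $1$. Since $\bY$ is closed and $\Gal_K$-stable up to conjugacy, its preimage under $\Phi_\ell^{\ss}$ is a closed, conjugation-stable subset of $\Gal_K$; Chebotarev applies to conjugation-invariant measurable sets, and a Zariski-closed proper subgroup-stable subset has Haar measure zero in the (compact) image precisely because the image is dense and $\bY$ is nowhere dense. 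Concretely, the $\overline{\F_\ell}$-points of $\bY$ form a lower-dimensional subvariety, so the corresponding subset of the profinite image has measure zero, and Chebotarev then gives that $\{v : \Phi_\ell^{\ss}(\mathrm{Frob}_{\bar v}) \in \bY\}$ has density $0$. Its complement, namely $\{v \in \Sigma_K \setminus S_\ell : \bH_{\bar v,\ell} \text{ is a maximal torus of } \bG_\ell\}$, therefore has density $1$.

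Finally, I would address the well-definedness remark: the condition ``$\bH_{\bar v,\ell}$ is a maximal torus'' depends only on $v$ and not on the choice of $\bar v \mid v$, because for $\bar v, \bar v' \mid v$ the Frobenius elements $\mathrm{Frob}_{\bar v}, \mathrm{Frob}_{\bar v'}$ are conjugate in $\Gal_K$, hence their images are conjugate in $\bG_\ell(\Q_\ell)$, hence the Frobenius tori $\bH_{\bar v,\ell}$ and $\bH_{\bar v',\ell}$ are $\bG_\ell(\Q_\ell)$-conjugate and in particular simultaneously maximal or not; this is exactly the sentence preceding the corollary in the excerpt. The only genuinely substantive input is Theorem \ref{FMT} itself, which is cited; given that, the main (minor) obstacle is the clean measure-theoretic passage from ``avoids a proper closed subvariety'' to ``Dirichlet density $1$'', which one handles by noting the image is compact and the bad locus is a nowhere-dense closed subset, so its indicator has integral zero against Haar measure, and then invoking the effective Chebotarev theorem for conjugation-invariant sets.
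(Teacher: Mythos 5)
The paper itself gives essentially no proof of this corollary: the preceding paragraph observes that $\bH_{\bar v,\ell}$ and $\bH_{\bar v',\ell}$ are conjugate when $\bar v, \bar v'$ lie over the same $v$ (so the statement is well-posed), asserts that the corollary ``follows directly'' from Theorem \ref{FMT}, and delegates the details to the cited references. Your proof reconstructs the standard argument behind those citations, and its outline is correct.

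Two remarks. First, your verification of hypotheses $(a)$--$(c)$ of Theorem \ref{FMT} for the system (\ref{1}) is carried out later in the paper (in the proof of Theorem \ref{bicharQ}, citing \cite[Theorem 1.1]{LP97} and \cite{Se97}); including it here is not wrong, but note that Corollary \ref{FMTC1} is stated in $\mathsection$\ref{FT} for a general strictly compatible system $\{\Psi_\ell\}_\ell$ and tacitly assumes those hypotheses, rather than applying only to \'etale cohomology. Second, and more substantively, the measure-zero step needs tightening: ``closed and nowhere dense'' does \emph{not} imply Haar measure zero (fat Cantor sets), and the appeal to $\bar\F_\ell$-points is not the operative fact. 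The correct justification is $\ell$-adic analytic. Set $\Gamma := \Psi_\ell(\Gal_K)$, a compact $\ell$-adic Lie subgroup of $\GL_k(\Q_\ell)$. Because $\Gamma$ is Zariski dense in the irreducible variety $\bG_\ell$ and $\bY$ is proper closed, no defining polynomial of $\bY$ can vanish identically on any open subset $U$ of $\Gamma$: finitely many translates of $U$ cover $\Gamma$, so the Zariski closure of $U$ would have to be all of $\bG_\ell$. Hence on each chart of $\Gamma$ (locally isomorphic to $\Z_\ell^{\dim\Gamma}$), some defining polynomial restricts to a \emph{nonzero} convergent power series, whose zero locus has Haar measure zero by the usual $\ell$-adic Weierstrass/Strassman induction. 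Covering $\Gamma$ by finitely many such charts gives $\mu\bigl(\bY(\Q_\ell)\cap\Gamma\bigr)=0$, and Chebotarev for conjugacy-invariant measurable sets then yields Dirichlet density $0$ for the bad places, hence density $1$ for the complement. With that repair your argument is complete and is exactly the intended one.
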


If we embed $\Q_\ell$ in $\C$, then $\bG_\ell\times_{\Q_\ell}\C$ is a connected $\C$-reductive subgroup of $\GL_{k,\C}$ for all $\ell$.

\begin{cor}\label{FMTC2}
The isomorphism class of the formal characters of $\bG_\ell\times_{\Q_\ell}\C\subset\GL_{k,\C}$  is independent of $\ell$. In particular, the rank of $\bG_\ell$ is independent of $\ell$.
\end{cor}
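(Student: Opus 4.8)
The plan is to deduce the $\ell$-independence of the formal character from Serre's theorem on Frobenius tori together with the strict compatibility of the system. First I would fix two primes $\ell_1 \neq \ell_2$ and an embedding of each of $\Q_{\ell_1}$, $\Q_{\ell_2}$ into $\C$. By Corollary \ref{FMTC1}, the set of $v \in \Sigma_K$ for which $\bH_{\bar v, \ell_1}$ is a maximal torus of $\bG_{\ell_1}$ has Dirichlet density $1$, and likewise for $\ell_2$; the intersection of these two density-$1$ sets is still of density $1$, hence nonempty, so I may choose a single place $v$ (with $v \notin S_{\ell_1} \cup S_{\ell_2}$) such that $\bH_{\bar v, \ell_1}$ is a maximal torus of $\bG_{\ell_1}$ and $\bH_{\bar v, \ell_2}$ is a maximal torus of $\bG_{\ell_2}$. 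The point of Serre's hypotheses (a), (b), (c) in Theorem \ref{FMT} is exactly that they are met for systems coming from \'etale cohomology (they follow from the Weil conjectures and the known structure of the local representations), so the genericity statement applies.

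Next I would compare the two Frobenius tori directly. The semisimple part of $\Psi_{\ell_1}(\mathrm{Frob}_{\bar v})$ is diagonalizable in $\GL_k(\C)$ with eigenvalues the roots of the characteristic polynomial $P_v(x) \in \Q[x]$; by strict compatibility (Definition \ref{comsys}(ii)) the semisimple part of $\Psi_{\ell_2}(\mathrm{Frob}_{\bar v})$ has the \emph{same} characteristic polynomial $P_v(x)$, hence the same multiset of eigenvalues in $\C$. Therefore the two semisimple elements are conjugate in $\GL_k(\C)$. The Frobenius torus $\bH_{\bar v, \ell_j} \times_{\Q_{\ell_j}} \C$ is, by Definition \ref{Frob}, the identity component of the Zariski closure in $\GL_{k,\C}$ of the cyclic group generated by that semisimple element; since this closure depends only on the conjugacy class of the element, $\bH_{\bar v, \ell_1} \times \C$ and $\bH_{\bar v, \ell_2} \times \C$ are conjugate as subtori of $\GL_{k,\C}$. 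But for $j = 1, 2$ the torus $\bH_{\bar v, \ell_j} \times \C$ is a maximal torus of $\bG_{\ell_j} \times \C$, and a maximal torus of $\bG_{\ell_j} \times \C$ together with its embedding in $\GL_{k,\C}$ is precisely a formal character of $\bG_{\ell_j} \times_{\Q_{\ell_j}} \C$ in the sense of Definition \ref{formal1}. Hence the formal characters of $\bG_{\ell_1} \times \C$ and $\bG_{\ell_2} \times \C$ are isomorphic, which is the first assertion; the invariance of the rank of $\bG_\ell$ is immediate since the rank equals $\dim \bH_{\bar v, \ell}$, which is the common dimension of these conjugate tori.

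The one subtlety I would want to be careful about is well-definedness: a priori the Frobenius torus (and even the choice of embedding $\Q_\ell \hookrightarrow \C$) could affect the answer, so I should note that the conjugacy class of a maximal torus of a \emph{connected} reductive group inside $\GL_{k,\C}$ is independent of which maximal torus is chosen (all maximal tori of $\bG_\ell \times \C$ are conjugate in $\bG_\ell \times \C \subset \GL_{k,\C}$), and that changing the embedding $\Q_\ell \hookrightarrow \C$ replaces everything by a conjugate as well. This makes the statement ``the formal character of $\bG_\ell \times_{\Q_\ell} \C$'' unambiguous. I do not expect a genuine obstacle here — the whole argument is a direct assembly of Theorem \ref{FMT}, Corollary \ref{FMTC1}, and Definition \ref{comsys}(ii); the only mild care needed is checking that the Weil-conjecture inputs (a)--(c) of Theorem \ref{FMT} do hold for the \'etale cohomology system, which is standard and was already invoked when the system was introduced in (\ref{1}).
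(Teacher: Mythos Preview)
Your proposal is correct and follows essentially the same route as the paper: pick a common place $v$ via Corollary~\ref{FMTC1}, use strict compatibility to match the eigenvalues of the two Frobenius elements over $\C$, conclude that the base-changed Frobenius tori are conjugate in $\GL_{k,\C}$, and invoke the conjugacy of maximal tori over an algebraically closed field (Remark~\ref{formalrem}) to make the statement well-posed. Your additional remarks about the Weil-conjecture inputs (a)--(c) and the independence of the embedding $\Q_\ell\hookrightarrow\C$ are not needed for the bare argument but do no harm.
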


\begin{proof}
For all distinct primes $\ell$ and $\ell'$, there exists $\bar v$ 
such that $\bT_{\bar v,\ell}\times_{\Q_\ell}\C$ and $\bT_{\bar v,\ell'}\times_{\Q_{\ell'}}\C$ 
are  maximal tori of $\bG_\ell\times_{\Q_\ell}\C$ and $\bG_{\ell'}\times_{\Q_{\ell'}}\C$ respectively
by Corollary \ref{FMTC1}. Since $\bT_{\bar v,\ell}\times_{\Q_\ell}\C$ and $\bT_{\bar v,\ell'}\times_{\Q_{\ell'}}\C$
only depend on the eigenvalues of $P_v(x)$ (Definition \ref{comsys}(ii)), they are conjugate
in $\GL_{k,\C}$. Therefore, the first assertion of the corollary holds
by Remark \ref{formalrem}. Since the rank of $\bG_\ell$ is defined as the dimension of 
a maximal torus, it is independent of $\ell$.
\end{proof}

\begin{cor}\label{FMTC3}
There exist a $\Q$-subtorus $\bT_\Q$ of $\GL_{k,\Q}$ and for every sufficiently large $\ell$, a formal character $\bT_\ell\subset\GL_{k,\Q_\ell}$ of $\bG_\ell$ such that $\bT_\Q\subset\GL_{k,\Q}$ is a common $\Q$-form of $\{\bT_\ell\subset\GL_{k,\Q_\ell}\}_{\ell\gg0}$ up to conjugation, i.e., the subtori $\bT_\Q\times_{\Q}\Q_\ell$ and $\bT_\ell$ are conjugate by an element of $\GL_k(\Q_\ell)$ if $\ell$ is sufficiently large.
\end{cor}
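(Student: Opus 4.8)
The plan is to upgrade the complex statement of Corollary \ref{FMTC2} to a statement over $\Q$ by exploiting the rationality of the Frobenius eigenvalues and a pigeonhole argument over the (finitely many) isomorphism classes of $\Q$-forms of a fixed torus. First I would fix, once and for all, a prime $\ell_0$ and a place $\bar v_0$ dividing some $v_0 \notin S_{\ell_0}$ for which $\bH_{\bar v_0, \ell_0}$ is a maximal torus of $\bG_{\ell_0}$; this is possible by Corollary \ref{FMTC1}. Write $r$ for the common rank of the $\bG_\ell$ (Corollary \ref{FMTC2}). The key observation is that the Frobenius torus $\bH_{\bar v, \ell}$, being generated by the semisimple part of $\Psi_\ell(\mathrm{Frob}_{\bar v})$, is determined as a subgroup of $\GL_{k,\Q_\ell}$ purely by the multiset of eigenvalues of $\Psi_\ell(\mathrm{Frob}_{\bar v})$, i.e.\ by the roots of $P_v(x) \in \Q[x]$; more precisely, the character lattice $X^*(\bH_{\bar v,\ell})$ together with its embedding in $X^*(\text{diagonal torus}) = \Z^k$ is exactly the image of $\Z^k \to \Z^k$ dual to the multiplicative relations among the eigenvalues $\alpha_1, \dots, \alpha_k$, which is a purely combinatorial datum living in $\Z^k$ and independent of $\ell$. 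Consequently the Frobenius torus ``is defined over $\Q$'': there is a $\Q$-subtorus $\bT_{v,\Q} \subset \GL_{k,\Q}$ with $\bT_{v,\Q} \times_\Q \Q_\ell$ conjugate to $\bH_{\bar v, \ell}$ in $\GL_k(\Q_\ell)$ whenever $\bH_{\bar v,\ell}$ makes sense, because the splitting field of $P_v$ is a fixed number field and the Galois action on the eigenvalues is $\ell$-independent.

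Next I would carry out the following selection. For each sufficiently large $\ell$, Corollary \ref{FMTC1} furnishes a place $\bar v_\ell$ with $\bH_{\bar v_\ell, \ell}$ a maximal torus of $\bG_\ell$; by the previous paragraph there is an associated $\Q$-torus $\bT_{v_\ell, \Q} \subset \GL_{k,\Q}$ of dimension $r$, and $\bT_\ell := \bH_{\bar v_\ell,\ell}$ is conjugate to $\bT_{v_\ell,\Q} \times_\Q \Q_\ell$ over $\Q_\ell$. The issue is that a priori the $\Q$-tori $\bT_{v_\ell,\Q}$ vary with $\ell$. To pin them down I would appeal to Corollary \ref{FMTC2}: over $\C$ all the $\bG_\ell$ are conjugate, hence all the maximal tori $\bH_{\bar v_\ell,\ell} \times \C$ are conjugate in $\GL_{k,\C}$, so all the $\Q$-tori $\bT_{v_\ell,\Q}$ become isomorphic over $\bar\Q$ (indeed over $\C$, hence over $\bar\Q$ since tori are split by finite extensions). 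Thus the $\bT_{v_\ell,\Q}$ all lie among the $\Q$-forms of one fixed $\bar\Q$-torus $T_{\bar\Q} \cong \G_{m,\bar\Q}^r$; but the isomorphism classes of such $\Q$-forms as $\Q$-subgroups of $\GL_{k,\Q}$ that can arise are constrained, and in any case a cleaner route is: the $\Q$-conjugacy class of $\bT_{v_\ell,\Q}$ in $\GL_{k,\Q}$ is determined by the $\GL_k(\bar\Q)$-conjugacy class (one class, by Corollary \ref{FMTC2}) together with the class in $H^1(\Q, N/T)$ where $N$ is the normalizer of $T$ in $\GL_k$; since $\Psi_\ell$ is a \emph{strictly compatible} system the Frobenius conjugacy data, and hence this cohomology class, can be taken from a finite set independent of $\ell$.

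The cleanest packaging, and the one I would actually write, avoids cohomology: I would directly use a pigeonhole principle on eigenvalue data. For each $v \notin S$, the splitting field $L_v$ of $P_v(x)$ over $\Q$ is a number field, and the $\Q$-isomorphism-type of $\bT_{v,\Q}$ as a subtorus of $\GL_{k,\Q}$ depends only on the finite Galois set consisting of the multiset $\{\alpha_1, \dots, \alpha_k\}$ of eigenvalues together with the $\Gal(L_v/\Q)$-action permuting them — equivalently on the pair (lattice $M_v \subset \Z^k$ of multiplicative relations, $\Gal$-action on $\Z^k / $ relations). As $v$ ranges over all unramified places, can this datum take infinitely many values? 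It can, but the \emph{dimension} of $\bT_{v,\Q}$ is bounded by $k$ and — crucially — once we restrict to those $v$ for which $\bH_{\bar v,\ell}$ is a \emph{maximal} torus of $\bG_\ell$, its $\C$-conjugacy class is the single fixed one of Corollary \ref{FMTC2}; among $\Q$-subtori of $\GL_{k,\Q}$ with a fixed $\GL_k(\C)$-conjugacy class there are only finitely many $\GL_k(\Q)$-conjugacy classes (the fibers of $H^1(\Q, N) \to H^1(\Q,\GL_k)$ over the trivial class, with $N = N_{\GL_k}(T)$; this $H^1(\Q,N)$ is finite because $N$ is an extension of a finite group by the torus $T$ and $T$ has finite $H^1$ over $\Q$ by Hasse's theorem together with finiteness of $H^1$ over each completion, or simply because $T$ is a subtorus of a fixed $\GL_k$ and Galois acts through a bounded quotient). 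Therefore the tori $\bT_{v_\ell,\Q}$, $\ell \gg 1$, fall into finitely many $\GL_k(\Q)$-conjugacy classes; pick an infinite subfamily $S'$ of primes for which they are all $\GL_k(\Q)$-conjugate, say all conjugate to a single $\bT_\Q \subset \GL_{k,\Q}$, and set $\bT_\ell := \bT_\Q \times_\Q \Q_\ell$ for $\ell \in S'$. For the remaining large $\ell \notin S'$, replace $\bG_\ell$ by a $\GL_k(\Q_\ell)$-conjugate so that $\bT_\ell := \bT_\Q \times_\Q \Q_\ell$ is a maximal torus of $\bG_\ell$ — this is possible because, by Corollary \ref{FMTC2} and the above, $\bG_\ell$ has \emph{some} maximal torus $\GL_k(\Q_\ell)$-conjugate to $\bT_\Q \times \Q_\ell$ (its $\C$-conjugacy class agrees, and chasing through the same finiteness it is realized over $\Q_\ell$). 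This produces $\bT_\Q \subset \GL_{k,\Q}$ and formal characters $\bT_\ell \subset \GL_{k,\Q_\ell}$ of $\bG_\ell$ for all $\ell \gg 1$ with $\bT_\Q \times_\Q \Q_\ell$ and $\bT_\ell$ conjugate in $\GL_k(\Q_\ell)$, as required.

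The main obstacle is the second paragraph: controlling the \emph{$\Q$-form} (not merely the $\C$-isomorphism class) of the Frobenius torus uniformly in $\ell$. The $\C$-invariance is handed to us by Corollary \ref{FMTC2}, and the ``a single Frobenius torus is defined over $\Q$'' part is elementary from strict compatibility; what needs care is the finiteness statement that a fixed $\GL_k(\C)$-conjugacy class of subtori of $\GL_{k,\Q}$ breaks into only finitely many $\GL_k(\Q)$-conjugacy classes, so that a pigeonhole over the infinitely many primes $\ell$ produces a \emph{common} $\Q$-form. I expect this to be the step where the write-up spends its effort, either via the finiteness of the relevant Galois cohomology $H^1(\Q, N_{\GL_k}(T))$ or via a direct combinatorial argument on the finitely many possible character-lattice-with-Galois-action data of a rank-$r$ subtorus of $\GL_{k,\Q}$ whose eigenvalue multiset is, up to the fixed Weyl group, prescribed.
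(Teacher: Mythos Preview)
Your first paragraph already contains the entire proof, but you abandon it and go down a much harder road that does not work. The missing one-line observation is this: having fixed $v_0$ with $\bH_{\bar v_0,\ell_0}$ maximal in $\bG_{\ell_0}$, the \emph{same} $v_0$ gives a maximal torus of $\bG_\ell$ for every large $\ell$. Indeed, you yourself note that the character lattice of $\bH_{\bar v_0,\ell}$ depends only on the multiplicative relations among the roots of $P_{v_0}(x)$ and is therefore independent of $\ell$; in particular $\dim\bH_{\bar v_0,\ell}$ is independent of $\ell$, and since it equals $r=\mathrm{rank}\,\bG_{\ell_0}=\mathrm{rank}\,\bG_\ell$ (Corollary~\ref{FMTC2}), the torus $\bH_{\bar v_0,\ell}\subset\bG_\ell$ is maximal for all $\ell$ with $v_0\notin S_\ell$. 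Then $\bT_\Q:=\bT_{v_0,\Q}$ and $\bT_\ell:=\bH_{\bar v_0,\ell}$ finish the proof. This is exactly what the paper does: it picks a semisimple $A_{v_0}\in\GL_k(\Q)$ with characteristic polynomial $P_{v_0}$, lets $\bT_\Q$ be the identity component of the Zariski closure of $\langle A_{v_0}\rangle$, and observes that two semisimple matrices over $\Q_\ell$ with the same characteristic polynomial are $\GL_k(\Q_\ell)$-conjugate.

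Your alternative route via pigeonhole has a genuine error. The assertion that among $\Q$-subtori of $\GL_{k,\Q}$ lying in a fixed $\GL_k(\C)$-conjugacy class there are only finitely many $\GL_k(\Q)$-conjugacy classes is false. For $k=2$, the one-dimensional tori $T_d=\{\left(\begin{smallmatrix}x&dy\\y&x\end{smallmatrix}\right):x^2-dy^2=1\}$ are all $\GL_2(\C)$-conjugate to the diagonal torus, but for $d$ ranging over distinct square classes in $\Q^*$ they have distinct splitting fields $\Q(\sqrt d)$ and hence are pairwise non-$\GL_2(\Q)$-conjugate. Correspondingly, $H^1(\Q,T)$ is not finite for a general $\Q$-torus $T$ (for a norm-one torus it is $\Q^*/N_{L/\Q}L^*$), so neither your cohomological argument nor the ``Galois acts through a bounded quotient'' remark rescues the finiteness. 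The pigeonhole step therefore collapses, and with it the last paragraph's attempt to propagate to the remaining $\ell\notin S'$. Drop all of this and use the single-$v$ argument from your first paragraph.
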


\begin{proof}
Let $\ell'$ be a prime. Then there exist $v\notin S$ (Definition \ref{comsys}(i)) and $\bar v$ dividing $v$ such that $\bT_{\ell'}:=\bT_{\bar v,\ell'}$ is a maximal torus of $\bG_{\ell'}$ by Corollary \ref{FMTC1}. For $\ell\gg0$, $\bT_{\ell}:=\bT_{\bar v,\ell}$ is a subtorus of $\bG_\ell$ by construction. Since the rank of $\bG_\ell$ is independent of $\ell$ by Corollary \ref{FMTC2}, $\bT_\ell$ is a maximal torus of $\bG_\ell$ for all $\ell\gg0$.
Let $A_v\in\GL_k(\Q)$ be a semisimple matrix with characteristic polynomial $P_v(x)$ (Definition \ref{comsys}(ii)). Then $A_v$ is conjugate in $\GL_k(\Q_\ell)$ to the semisimple part of $\Psi_\ell(\mathrm{Frob}_{\bar v})$ for all $\ell\gg0$. Hence, if we denote by $\bT_\Q$ the identity component of the smallest algebraic subgroup of $\GL_{k,\Q}$ containing $A_v$, then $\bT_\Q\subset\GL_{k,\Q}$ is a common $\Q$-form of $\{\bT_\ell\subset\GL_{k,\Q_\ell}\}_{\ell\gg0}$ up to conjugation.
\end{proof}

\subsection{$\ell$-independence of the $\ell$-adic images}\label{l-adic}

We follow the terminology in $\mathsection\ref{FT}$.
Let $i:\bS_\mathfrak{m}\to\GL_{m,\Q}$ be a representation of some Serre group $\bS_\mathfrak{m}$ of number field $K$. Then attached to this morphism is a strictly compatible system of abelian semisimple $\ell$-adic representations $\{\Theta_\ell\}_\ell$ of $K$ \cite[$\mathsection2.2$]{Se98}. Suppose $i$ is injective. Consider the direct sum of two strictly compatible systems, 
\begin{equation}\label{sumsys}
\{\Psi_\ell\oplus\Theta_\ell:\Gal_K\to \GL_k(\Q_\ell)\times\GL_m(\Q_\ell)\subset\GL_{k+m}(\Q_\ell)\}_\ell.
\end{equation}
Define $p_1:\GL_{k}\times\GL_{m}\to \GL_{k}$ and $p_2:\GL_{k}\times\GL_{m}\to \GL_{m}$ 
to be the projection to the first and the second factors respectively. 
Let $\bG_\ell'\subset\GL_{k,\Q_\ell}\times\GL_{m,\Q_\ell}$ be the algebraic monodromy group at $\ell$, which 
is assumed to be connected.
Let $\bT_\ell'$ be a maximal torus of $\bG_\ell'$. Then $p_1(\bT_\ell')$ is a maximal torus of $\bG_\ell$, the algebraic monodromy group of $\Psi_\ell$.
We showed in \cite[$\mathsection3$]{Hui13} that the conjugacy class of the subtorus
$$\bT_\ell'\times_{\Q_{\ell}}\C\subset\GL_{k,\C}\times\GL_{m,\C}$$ 
is independent of $\ell$,
i.e., for all primes $\ell$ and $\ell'$, the subtori 
$\bT_\ell'\times_{\Q_\ell}\C$ and $\bT_{\ell'}'\times_{\Q_{\ell'}}\C$ are conjugate in $\GL_{k,\C}\times\GL_{m,\C}$.
Define
\begin{align}
\begin{split}
\bT^{\ss}_\C:&=p_1((\mathrm{Ker}(p_2)\cap\bT_\ell')^\circ)\times_{\Q_\ell}\C;\\
\bT_\C:&=p_1(\bT_\ell')\times_{\Q_\ell}\C.
\end{split}
\end{align}
It follows that the conjugacy class of the chain of subtori
$\bT^{\ss}_\C\subset \bT_\C\subset\GL_{k,\C}$ is independent of $\ell$.

\begin{thm}\cite[Theorem 3.19]{Hui13}\label{bicharC}
The complex torus $\bT^{\ss}_\C$ is a maximal torus of $\bG_\ell^{\der}\times_{\Q_\ell}\C$ 
and the isomorphism class of the formal bi-character
\begin{equation*}\label{formbi}
\bT^{\ss}_\C\subset \bT_\C\subset\GL_{k,\C}
\end{equation*}
 of $\bG_\ell\times_{\Q_\ell}\C$ 
is independent of $\ell$. In particular, the semisimple rank of $\bG_\ell$ is independent of $\ell$.
\end{thm}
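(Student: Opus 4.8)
The plan is to leverage the $\ell$-independence of the formal bi-character of the \emph{augmented} system $\{\Psi_\ell\oplus\Theta_\ell\}_\ell$ — which is exactly what \cite[$\mathsection3$]{Hu13} establishes — and then transfer that information down to $\bG_\ell$ via the projection $p_1$. The torus $\bT_\ell'$ is a maximal torus of $\bG_\ell'\subset\GL_{k}\times\GL_m$; base-changing to $\C$, the statement from \cite{Hu13} says the pair $\bT_\ell'\subset\GL_{k,\C}\times\GL_{m,\C}$ (equivalently, the $\GL_k(\C)\times\GL_m(\C)$-conjugacy class of this inclusion of tori) is independent of $\ell$. First I would record that $(\mathrm{Ker}(p_2)\cap\bT_\ell')^\circ$ is a subtorus of $\bT_\ell'$ carved out by the same intrinsic recipe ``kernel of the second projection'', so its image in $\GL_{k,\C}$ together with its ambient $\bT_\C=p_1(\bT_\ell')\times_{\Q_\ell}\C$ inherits the $\ell$-independence: the chain $(\mathrm{Ker}(p_2)\cap\bT_\ell')^\circ\times\C\subset p_1(\bT_\ell')\times\C\subset\GL_{k,\C}$ is, up to $\GL_k(\C)$-conjugacy, the same for every $\ell$.

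The substantive geometric step is to identify $(\mathrm{Ker}(p_2)\cap\bT_\ell')^\circ\times_{\Q_\ell}\C$ with a maximal torus of $\bG_\ell^{\der}\times_{\Q_\ell}\C$. I would argue as follows. Since $\Theta_\ell$ comes from a faithful representation of a Serre group, $\bG(\Theta_\ell)$ is a torus, so $p_2(\bG_\ell')$ is a torus; its identity component equals $p_2(\bG_\ell'^{\,\circ})=p_2(\bG_\ell')$ (we have arranged connectedness). Hence $\bG_\ell'/(\mathrm{Ker}\,p_2\cap\bG_\ell')$ is abelian, which forces $\bG_\ell'^{\der}\subseteq\mathrm{Ker}(p_2)\cap\bG_\ell'$; conversely $p_1$ is injective on $\mathrm{Ker}(p_2)$, so $p_1$ maps $\mathrm{Ker}(p_2)\cap\bG_\ell'$ isomorphically onto a normal subgroup $\bN_\ell$ of $\bG_\ell=p_1(\bG_\ell')$ with abelian quotient, giving $\bG_\ell^{\der}\subseteq\bN_\ell$. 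To get equality I would note that $\bN_\ell$ is contained in $\bG_\ell^{\der}\cdot(\text{a torus})$ but has no abelian quotient onto which $p_2$ could inject nontrivially — more precisely, $\bN_\ell$ is generated by the images of the root subgroups of $\bG_\ell$ together with a central torus that must itself lie in $\mathrm{Ker}(p_2)$, and the abelian-ization constraint kills the extra torus, so $\bN_\ell=\bG_\ell^{\der}$ up to isogeny and in any case $(\mathrm{Ker}(p_2)\cap\bG_\ell')^{\circ}$ has the same identity component as $\bG_\ell^{\der}$ under $p_1$. Intersecting with a maximal torus: $p_1$ carries $(\mathrm{Ker}(p_2)\cap\bT_\ell')^{\circ}$ isomorphically onto $(p_1(\mathrm{Ker}\,p_2\cap\bT_\ell'))^{\circ}\subseteq p_1(\bT_\ell')$, which is a maximal torus of $p_1(\mathrm{Ker}\,p_2\cap\bG_\ell')^{\circ}=\bG_\ell^{\der}$ because $\bT_\ell'$ is maximal in $\bG_\ell'$ and the projection of a maximal torus is maximal; base-changing to $\C$ preserves this. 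That establishes the first assertion of the theorem, and then the bi-character statement is immediate: the chain displayed in the theorem is precisely the image under the (injective, $\ell$-independent) projection $p_1\times\C$ of the $\ell$-independent chain inside $\GL_{k,\C}\times\GL_{m,\C}$, so it is itself $\ell$-independent up to $\GL_k(\C)$-conjugacy, i.e.\ isomorphic as a formal bi-character for all $\ell$.

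Finally, for the ``in particular'' clause: the semisimple rank of $\bG_\ell$ is by definition $\dim$ of a maximal torus of $\bG_\ell^{\der}$, which we have just identified (after $\otimes\C$, which does not change dimension) with $(\mathrm{Ker}(p_2)\cap\bT_\ell')^{\circ}\times\C$; since the isomorphism class of that torus is independent of $\ell$, so is its dimension, hence so is the semisimple rank.

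The main obstacle I anticipate is the clean identification $(\mathrm{Ker}(p_2)\cap\bT_\ell')^{\circ}\times\C=$ a maximal torus of $\bG_\ell^{\der}\times\C$ — specifically, being careful that $\mathrm{Ker}(p_2)\cap\bG_\ell'$ may a priori be larger than $\bG_\ell^{\der}$ by a subtorus, and verifying that this subtorus is forced to be trivial (or that it does not interfere after passing to identity components and intersecting with $\bT_\ell'$). The point is that any central torus $\bZ$ of $\bG_\ell'$ with $\bZ\subseteq\mathrm{Ker}(p_2)$ would have to be a central torus of $\bG_\ell$ lying in $\mathrm{Ker}(p_2)$; but $\Theta_\ell$ is chosen (via the Serre group) to detect the full torus part, i.e.\ $p_2$ is injective on the radical of $\bG_\ell'$, so no such nontrivial $\bZ$ exists. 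Pinning this down requires unwinding the construction of $\{\Theta_\ell\}_\ell$ from the faithful representation $i\colon\bS_\mathfrak{m}\to\GL_{m,\Q}$ and the fact that $\bG(\Psi_\ell\oplus\Theta_\ell)$ surjects onto $\bG(\Theta_\ell)=$ the image of the $\ell$-adic realization of $\bS_\mathfrak{m}$, which contains the cocharacter data needed to see the radical — this is the content I would cite from \cite[$\mathsection2.2$]{Se98} and \cite[$\mathsection3$]{Hu13} rather than reprove.
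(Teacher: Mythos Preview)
The paper does not prove this theorem; it is quoted verbatim from \cite[Theorem~3.19]{Hu13}, with only the sentence immediately preceding the statement (``This implies $(\mathrm{Ker}(p_2)\cap\bT_\ell')^\circ\times_{\Q_\ell}\C\subset\GL_{k,\C}$ is independent of $\ell$'') serving as partial justification. Your sketch is therefore a reconstruction of the argument in \cite{Hu13} rather than something to compare against this paper, and as such it is essentially correct in outline.

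Your final paragraph correctly isolates the genuine crux: one needs $(\mathrm{Ker}(p_2)\cap\bG_\ell')^\circ=\bG_\ell'^{\der}$, equivalently $p_2$ injective on the radical of $\bG_\ell'$. This is precisely what \cite[$\mathsection3$]{Hu13} supplies, and it rests on the nontrivial fact that the abelianized representation $\Gal_K\to(\bG_\ell/\bG_\ell^{\der})(\Q_\ell)$ is locally algebraic and hence factors through $\bS_\mathfrak{m}$ for $\mathfrak{m}$ sufficiently divisible. Since $\Theta_\ell$ is a \emph{faithful} representation of $\bS_\mathfrak{m}$, the abelianization of $\Psi_\ell\oplus\Theta_\ell$ then factors through $p_2$, giving $\mathrm{Ker}(p_2|_{\bG_\ell'})\subseteq\bG_\ell'^{\der}$ and hence equality. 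Once that is in hand, $p_1$ restricted to $\bG_\ell'^{\der}=\mathrm{Ker}(p_2)\cap\bG_\ell'$ is an isomorphism onto $\bG_\ell^{\der}$, and the torus statement follows.

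One caveat: your intermediate attempt at the equality $\bN_\ell=\bG_\ell^{\der}$ (``$\bN_\ell$ is generated by the images of the root subgroups \ldots\ and the abelian-ization constraint kills the extra torus'') is circular as written. Root subgroups generate only $\bG_\ell^{\der}$, and the assertion that the extra central torus is ``killed'' is exactly the statement that $p_2$ is injective on the radical --- which is what you are trying to prove at that point. You evidently recognized this, since your closing paragraph supplies the correct mechanism via the Serre group; the earlier passage should simply be replaced by a forward reference to that input rather than left as a heuristic.
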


Let $\Psi_\ell$ be $\Phi_\ell^{\ss}$ for all $\ell$.
By combining all the results in this subsection, we obtain the following theorem for the system (\ref{1}).

\begin{thm}\label{bicharQ}
Let $\{\Phi_\ell\}_\ell$ be the system (\ref{1}) and $\bG_\ell$ the connected algebraic monodromy group of $\Phi_\ell^{\ss}$ for all $\ell$.
There exist two $\Q$-subtori $\bT^{\ss}_\Q\subset\bT_\Q$ of $\GL_{k,\Q}$ and a formal bi-character $\bT_\ell^{\ss}\subset\bT_\ell\subset\GL_{k,\Q_\ell}$ of $\bG_\ell$ for all sufficiently large $\ell$ such that $\bT^{\ss}_\Q\subset\bT_\Q\subset\GL_{k,\Q}$ is a common $\Q$-form of $\{\bT_\ell^{\ss}\subset\bT_\ell\subset\GL_{k,\Q_\ell}\}_{\ell\gg0}$ up to conjugation.
\end{thm}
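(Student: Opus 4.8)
The plan is to combine the complex $\ell$-independence of Theorem \ref{bicharC} with the $\Q$-rational control on tori coming from Frobenius elements, as packaged in Corollaries \ref{FMTC2} and \ref{FMTC3}. First I would fix, once and for all, a semisimple matrix $A_v \in \GL_k(\Q)$ with characteristic polynomial $P_v(x)$ attached to a single good place $v \notin S$, chosen (via Corollary \ref{FMTC1}, applied to the system (\ref{1}) which satisfies the hypotheses of Theorem \ref{FMT} by Weil's Riemann hypothesis for cohomology of smooth projective varieties and standard integrality of Frobenius eigenvalues) so that the Frobenius torus $\bH_{\bar v,\ell}$ is a maximal torus of $\bG_\ell$ for \emph{all sufficiently large} $\ell$ simultaneously. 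Here I must be slightly careful: Corollary \ref{FMTC1} gives, for each $\ell$, a density-one set of such $v$; to get a single $v$ working for all large $\ell$ one uses that the exceptional subvariety $\bY$ in Theorem \ref{FMT} can be taken uniformly, or equivalently that the formal character is eventually $\ell$-independent (Corollary \ref{FMTC2}) so a fixed $v$ that is generic for the common formal character works for all $\ell \gg 1$. Then, as in the proof of Corollary \ref{FMTC3}, I define $\bT_\Q \subset \GL_{k,\Q}$ to be the identity component of the smallest algebraic subgroup of $\GL_{k,\Q}$ containing $A_v$; since $A_v$ is $\GL_k(\Q_\ell)$-conjugate to the semisimple part of $\Phi_\ell^{\ss}(\mathrm{Frob}_{\bar v})$ for $\ell \gg 1$, the chain $\bT_\Q \times_\Q \Q_\ell \subset \GL_{k,\Q_\ell}$ is conjugate to $\bT_\ell := \bH_{\bar v,\ell} \subset \GL_{k,\Q_\ell}$, a maximal torus of $\bG_\ell$.

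Next I would descend the semisimple piece to $\Q$. Working with the augmented system (\ref{sumsys}), i.e. after twisting $\Phi_\ell^{\ss}$ by an appropriate abelian system $\Theta_\ell$ coming from a faithful $\Q$-representation of a Serre group, Theorem \ref{bicharC} identifies a distinguished maximal torus $\bT^{\ss}_\C = (\mathrm{Ker}(p_2) \cap \bT_\ell')^\circ \times_{\Q_\ell} \C$ of $\bG_\ell^{\der} \times_{\Q_\ell} \C$ sitting inside $\bT_\C = p_1(\bT_\ell') \times_{\Q_\ell} \C$, with the pair $\bT^{\ss}_\C \subset \bT_\C$ independent of $\ell$ up to $\GL_{k,\C}$-conjugacy. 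The key point is that this construction is already defined over $\Q_\ell$: $(\mathrm{Ker}(p_2) \cap \bT_\ell')^\circ$ is a $\Q_\ell$-subtorus of the $\Q_\ell$-torus $\bT_\ell' $, and its image under $p_1$ lands inside $\bG_\ell^{\der}$. So I would set $\bT_\ell^{\ss} := p_1\big((\mathrm{Ker}(p_2) \cap \bT_\ell')^\circ\big) \subset \GL_{k,\Q_\ell}$, a maximal torus of $\bG_\ell^{\der}$, and arrange (by choosing the Frobenius torus inside the augmented group $\bG_\ell'$) that $\bT_\ell^{\ss} \subset \bT_\ell$ is the formal bi-character determined by the very same good place $v$. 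Because $\bT_\ell'$ itself is, for $\ell \gg 1$, a Frobenius torus of the augmented system and hence $\GL_{k+m}(\Q)$-conjugate to a fixed $\Q$-rational torus $\bT_\Q'$ (Corollary \ref{FMTC3} applied to (\ref{sumsys})), the subtorus $(\mathrm{Ker}(p_2) \cap \bT_\ell')^\circ$ is conjugate to the fixed $\Q$-subtorus $(\mathrm{Ker}(p_2) \cap \bT_\Q')^\circ$, and applying $p_1$ gives a $\Q$-torus $\bT^{\ss}_\Q \subset \bT_\Q$ that is a common $\Q$-form of the chains $\bT_\ell^{\ss} \subset \bT_\ell$ for all $\ell \gg 1$.

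Finally I would assemble the two conjugacies into a single one. For each large $\ell$ I have $g_\ell \in \GL_{k+m}(\Q_\ell)$ conjugating the fixed $\Q$-rational pair $(\mathrm{Ker}(p_2)\cap\bT_\Q')^\circ \subset \bT_\Q'$ onto $(\mathrm{Ker}(p_2)\cap\bT_\ell')^\circ \subset \bT_\ell'$; projecting by $p_1$ (which kills the $\GL_m$-factor up to the harmless abelian twist, so that the images of $p_1$ recover $\bG_\ell$ and its derived group exactly) yields $h_\ell \in \GL_k(\Q_\ell)$ with $h_\ell \cdot (\bT^{\ss}_\Q \subset \bT_\Q \subset \GL_{k,\Q}) \times_\Q \Q_\ell \cdot h_\ell^{-1} = (\bT_\ell^{\ss} \subset \bT_\ell \subset \GL_{k,\Q_\ell})$. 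That is precisely the assertion that $\bT^{\ss}_\Q \subset \bT_\Q \subset \GL_{k,\Q}$ is a common $\Q$-form, and the $\ell$-independence of the semisimple rank follows since $\dim \bT^{\ss}_\Q$ is now $\ell$-independent by construction. I expect the main obstacle to be the bookkeeping in the second paragraph: one must verify that the torus singled out in Theorem \ref{bicharC} over $\C$ really is obtained by base change from a $\Q_\ell$-rational torus defined by an intrinsic (Frobenius-theoretic, hence compatible-across-$\ell$) recipe, rather than merely from an abstract choice of maximal torus — equivalently, that the decomposition $\bT_\ell' \sim \bT^{\ss}_\ell \times (\text{radical-type part})$ can be taken $\GL_k(\Q_\ell)$-rationally and compatibly with the fixed $\Q$-form $\bT_\Q'$ of the ambient Frobenius torus. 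Once that compatibility is pinned down, the rest is a diagram chase through the projection $p_1$.
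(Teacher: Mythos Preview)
Your proposal is correct and follows essentially the same route as the paper: apply Corollary~\ref{FMTC3} to the augmented system $\{\Phi_\ell^{\ss}\oplus\Theta_\ell\}_\ell$ to obtain a common $\Q$-form $\bT_\Q'\subset\GL_{k,\Q}\times\GL_{m,\Q}$ of the Frobenius tori $\bT_\ell'$, then set $\bT_\Q:=p_1(\bT_\Q')$ and $\bT^{\ss}_\Q:=(\mathrm{Ker}(p_2)\cap\bT_\Q')^\circ$, invoking Theorem~\ref{bicharC} to see that the corresponding $\Q_\ell$-chain is a formal bi-character of $\bG_\ell$. Your first paragraph (treating the unaugmented system separately) is a harmless detour the paper omits, and the projection step in your third paragraph is justified because the conjugating element can be taken in $\GL_k(\Q_\ell)\times\GL_m(\Q_\ell)$, each block being separately strictly compatible.
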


\begin{proof}
Since $\Phi_\ell^{\ss}$ and $\Theta_\ell$ satisfy the conditions ($a$), ($b$), ($c$) of Theorem \ref{FMT} (\cite[Theorem 1.1]{LP97}, \cite[Chapter 2 $\mathsection3.4$]{Se97}), so does $\Phi_\ell^{\ss}\oplus\Theta_\ell$.
Since $\{\Phi_\ell^{\ss}\}_\ell$ and $\{\Theta_\ell\}_\ell$ are both strictly compatible,
 there exists a
formal character 
$$\bT_\ell'\subset\GL_{k,\Q_\ell}\times\GL_{m,\Q_\ell}\subset\GL_{k+m,\Q_\ell}$$
 of $\bG_\ell'$ 
such that these formal characters have a common
$\Q$-form up to conjugation in $\GL_k\times\GL_m$
$$\bT'_\Q\subset\GL_{k,\Q}\times\GL_{m,\Q}\subset\GL_{k+m,\Q}$$
for all sufficiently large $\ell$
 by Corollary \ref{FMTC3}.
Define two $\Q$-tori $\bT^{\ss}_\Q:=p_1((\mathrm{Ker}(p_2)\cap\bT'_\Q)^\circ)$ and $\bT_\Q:=p_1(\bT'_\Q)$. 
Define two $\Q_\ell$-tori $\bT_\ell^{\ss}:=p_1((\mathrm{Ker}(p_2)\cap\bT_\ell')^\circ)$ and $\bT_\ell:=p_1(\bT_\ell')$. Then 
\begin{equation*}
\bT_\ell^{\ss}\subset \bT_\ell\subset\GL_{k,\Q_\ell}
\end{equation*}
is a formal bi-character of $\bG_\ell$ by Theorem \ref{bicharC} and admits a
$\Q$-form $\bT^{\ss}_\Q\subset\bT_\Q\subset\GL_{k,\Q}$ by construction if $\ell$ is sufficiently large.
\end{proof}

Let $\mathfrak g_\ell^{\der}$ be the Lie algebra of $\bG_\ell^{\der}\times_{\Q_\ell}\C$. 
Since the isomorphism class of the formal characters of $\bG_\ell^{\der}\times_{\Q_\ell}\C\subset\GL_{k,\C}$ 
is independent of $\ell$ (Theorem \ref{bicharC}), 
the isomorphism class of the formal characters of $\mathfrak g_\ell^{\der}\subset\End_k(\C)$ (in the sense of \cite[$\mathsection2.1$]{Hui13})
 is likewise independent of $\ell$.
We obtained 
the following $\ell$-independence result by studying the positions of roots in the weight space
 \cite[$\mathsection2$]{Hui13}. Relevant details will be given in $\mathsection\ref{s3.1}$.

\begin{thm}\label{adic}\cite[Theorem 3.21]{Hui13}
Let $\mathfrak{g}_\ell$ be the Lie algebra of $\bG_\ell\times_{\Q_\ell}\C$ and $a_{n,\ell}$
 the number of $A_n$ factors of $\mathfrak{g}_\ell$. Then the following statements hold.
\begin{enumerate}[($i$)]
\item The parity of $a_{4,\ell}$ is independent of $\ell$;
\item The number $a_{n,\ell}$ is independent of $\ell$ 
if $n\in\N\backslash\{1,2,3,4,5,7,8\}$.
\end{enumerate}
\end{thm}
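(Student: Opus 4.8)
The plan is to extract everything from the $\ell$-independence of the formal bi-character supplied by Theorem \ref{bicharC}: I will show that the weight multiset of the standard representation of $\mathfrak g_\ell^{\der}$ already determines the multiplicities $a_{n,\ell}$ up to exactly the ambiguities permitted in the statement. First I would record that, by Theorem \ref{bicharC}, the conjugacy class in $\GL_{k,\C}$ of the chain $\bT^{\ss}_\C\subset\bT_\C\subset\GL_{k,\C}$ — with $\bT^{\ss}_\C$ a maximal torus of $\bG_\ell^{\der}\times_{\Q_\ell}\C$ — is independent of $\ell$; equivalently the formal character of $\mathfrak g_\ell^{\der}\subset\End_k(\C)$ (in the sense of \cite[$\mathsection2.1$]{Hu13}), that is the multiset $\Omega$ of weights of the tautological $k$-dimensional representation viewed inside the fixed real vector space $X^*(\bT^{\ss}_\C)\otimes\R$, is independent of $\ell$, and so is the rank $r=\dim\bT^{\ss}_\C$ (the semisimple rank of $\bG_\ell$ being $\ell$-independent by the same theorem). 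It then suffices to prove that the integers $a_{n,\ell}$ for $n\notin\{1,2,3,4,5,7,8\}$, together with the parity of $a_{4,\ell}$, are functions of $\Omega$ alone.

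The central step is to pass from the weights back to the root system. Write $\mathfrak g_\ell^{\der}=\bigoplus_j\mathfrak q_j$ with $\mathfrak q_j$ simple, and let $R_\ell=\bigsqcup_j R_j$ be its root system, the $R_j$ being its irreducible components, supported on mutually orthogonal subspaces of $X^*(\bT^{\ss}_\C)\otimes\R$. The tautological representation decomposes into external tensor products of irreducibles of the $\mathfrak q_j$, and faithfulness forces each $\mathfrak q_j$ to act non-trivially on at least one constituent, so that $\Omega$ is assembled as a sumset from the weight systems of those constituents; in particular $R_\ell\subseteq\Omega-\Omega:=\{\,\mu-\nu:\mu,\nu\in\Omega\,\}$, since each root space moves one weight space of $\C^k$ into another. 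The heart of the matter — the analysis of the \emph{positions of roots in the weight space} carried out in \cite[$\mathsection2$]{Hu13} — is that $R_\ell$, hence the unordered list of Cartan--Killing types of the $R_j$, can be reconstructed from $\Omega$ by an explicit combinatorial procedure: for a candidate $\alpha\in\Omega-\Omega$ one decides membership in $R_\ell$ by inspecting the $\alpha$-strings through the points of $\Omega$ and how multiplicities propagate along them, using that the restriction of $\C^k$ to the $\mathfrak{sl}_2$ attached to any root is a sum of strings of consecutive weights.

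Next I would classify the genuine ambiguities of this reconstruction, that is, when two faithful products of simple groups can produce the same $\Omega$. Since $k$ and $r$ are fixed, only finitely many profiles of irreducible constituents and of external tensor factorisations can occur, and the coincidences that survive are exactly those fed by the small-rank classical isomorphisms and inclusions ($A_1\simeq B_1\simeq C_1$; $A_3\simeq D_3$, together with the coincidence of weight multisets of $\wedge^2$ of the standard representation of $\SL_4$ and of the obvious $6$-dimensional representation of $\SL_2\times\SL_2\times\SL_2$; $A_2\subset G_2$ through the $7$-dimensional representation), and by the maximal-rank Borel--de Siebenthal subalgebras read off from the affine Dynkin diagrams of the exceptional groups: $A_8\subset E_8$, $A_4\times A_4\subset E_8$, $A_7\subset E_7$, $A_5\times A_1\subset E_6$. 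For instance, restricting the adjoint representation of $E_8$ to $A_8$ gives $\mathfrak{sl}_9\oplus\wedge^3\C^9\oplus(\wedge^3\C^9)^*$, whose weight multiset equals that of the $248$-dimensional irreducible representation of $E_8$, so from $\Omega$ alone one cannot decide whether such a block contributes an $A_8$-factor or an $E_8$-factor. Working through all these coincidences one finds that $a_{n,\ell}$ is read off from $\Omega$ without ambiguity for every $n\notin\{1,2,3,4,5,7,8\}$, whereas for $n=4$ the only residual ambiguity — the one coming from $A_4\times A_4\subset E_8$ — replaces two $A_4$-factors by an $E_8$-factor or conversely, changing $a_{4,\ell}$ by an even number; hence $a_{4,\ell}\bmod 2$ is still a function of $\Omega$. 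Combining this with the $\ell$-independence of $\Omega$ from the first step gives both assertions of the theorem.

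The hard part is this last step: the exhaustive classification of coincidences among weight multisets of faithful representations of products of simple groups. It is precisely here that the excluded ranks $n\in\{4,5,7,8\}$ and the small cases $n\in\{1,2,3\}$ genuinely intervene, and that the best available statement for $A_4$ is its parity. Carrying it out requires a careful inventory of the small irreducible representations of the $\SL_m$ — their standard, exterior-power, symmetric-square and adjoint representations — together with all external tensor products that can be built from them, and a check that, apart from the finitely many coincidences above, no two of the remaining configurations become equal after a linear change of coordinates on the torus.
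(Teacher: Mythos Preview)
The paper does not supply its own proof of this theorem: it is quoted verbatim as \cite[Theorem~3.21]{Hu13}, with only the remark that the argument proceeds by ``studying the positions of roots in the weight space'' and a pointer to \S\ref{s3.1} for some of the underlying combinatorics (Proposition~\ref{rootcomp}). Your outline is therefore not so much an alternative to the paper's proof as an attempt to reconstruct the proof in \cite{Hu13} that the paper invokes, and at the level of strategy it is aligned with what is described: reduce via Theorem~\ref{bicharC} to the $\ell$-independence of the formal character of $\mathfrak g_\ell^{\der}\subset\End_k(\C)$, and then argue that this multiset of weights pins down the multiset of simple factors up to a short list of ambiguities supported on the excluded ranks.

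That said, the proposal as written has a real gap at exactly the point you flag as ``the hard part.'' You assert that the coincidences you list --- the low-rank accidental isomorphisms together with the Borel--de~Siebenthal inclusions $A_8\subset E_8$, $A_4\times A_4\subset E_8$, $A_7\subset E_7$, $A_5\times A_1\subset E_6$, and $A_2\subset G_2$ --- exhaust the ambiguities, but you give no mechanism to prove exhaustiveness; the actual argument in \cite[\S2]{Hu13} does not proceed by an enumeration of equal-rank subalgebra inclusions but by a direct analysis of where a root of one algebra can project in the weight-space decomposition of the other (this is what Proposition~\ref{rootcomp} and the references to \cite[Propositions~2.11, \S2.13]{Hu13} encode). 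Your intermediate claim that ``$R_\ell$ can be reconstructed from $\Omega$ by an explicit combinatorial procedure'' is also stated too strongly and then immediately contradicted by the discussion of ambiguities; what is true is weaker, namely that certain invariants of $R_\ell$ (in particular the $a_{n,\ell}$ for the stated $n$, and $a_{4,\ell}\bmod 2$) are determined by $\Omega$. Finally, your list of coincidences is illustrative rather than complete (for instance $A_5\times A_2\times A_1\subset E_8$ and $A_2\times A_2\times A_2\subset E_6$ are also maximal-rank), so even as a heuristic the enumeration would need more care before it could stand in for the detailed case analysis of \cite{Hu13}.
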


\begin{cor}\label{adiccor}
Suppose Hypothesis \ref{A} holds, then the complex reductive Lie algebra
$\mathfrak{g}_\ell$ is independent of $\ell$.
\end{cor}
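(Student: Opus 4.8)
To prove that the complex reductive Lie algebra $\mathfrak g_\ell$ is independent of $\ell$, the plan is to reduce the statement to a purely combinatorial claim about the multiplicities of its simple factors. Write $\mathfrak g_\ell=\mathfrak z_\ell\oplus\mathfrak g_\ell^{\der}$ with $\mathfrak z_\ell$ the centre and $\mathfrak g_\ell^{\der}\cong\bigoplus_{n}A_n^{\oplus a_{n,\ell}}$ its semisimple part; then $\mathfrak g_\ell$ is determined up to isomorphism by the single integer $\dim\mathfrak z_\ell$ together with the multiplicities $(a_{n,\ell})_n$. Now $\dim\mathfrak z_\ell=\rk(\bG_\ell)-(\text{semisimple rank of }\bG_\ell)$, and both numbers are independent of $\ell$ by Corollary \ref{FMTC2} and Theorem \ref{bicharC}; hence $\mathfrak z_\ell$ is independent of $\ell$, and it remains only to show that each $a_{n,\ell}$ is independent of $\ell$.

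By Theorem \ref{adic}(ii) we already have $a_{n,\ell}=a_{n,\ell_0}$ whenever $n\in\N\backslash\{1,2,3,4,5,7,8\}$, so the only multiplicities still at issue are those with $n\in\{1,2,3,4,5,7,8\}$. I would combine this with the invariance of the semisimple rank (Theorem \ref{bicharC}): since the semisimple rank equals $\sum_n n\,a_{n,\ell}$, subtracting off the constant $C:=\sum_{n\in\N\backslash\{1,2,3,4,5,7,8\}}n\,a_{n,\ell_0}$ yields, for every $\ell$,
\[
a_{1,\ell}+2a_{2,\ell}+3a_{3,\ell}+4a_{4,\ell}+5a_{5,\ell}+7a_{7,\ell}+8a_{8,\ell}=D,
\]
where $D$ is a nonnegative integer independent of $\ell$. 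Evaluating at $\ell=\ell_0$ and using Hypothesis \ref{A}: part (ii) forces $a_{n,\ell_0}=0$ for $n\in\{1,2,3,5,7,8\}$, and part (i) forces $a_{4,\ell_0}\in\{0,1\}$; hence $D=4a_{4,\ell_0}\in\{0,4\}$.

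If $D=0$, then nonnegativity of the $a_{n,\ell}$ forces all seven exceptional multiplicities to vanish identically in $\ell$, which agrees with their values at $\ell_0$. If $D=4$, then (the coefficients $5,7,8$ exceeding $4$) nonnegativity forces $a_{5,\ell}=a_{7,\ell}=a_{8,\ell}=0$ and $a_{1,\ell}+2a_{2,\ell}+3a_{3,\ell}+4a_{4,\ell}=4$, so in particular $a_{4,\ell}\le1$; but $a_{4,\ell_0}=1$ is odd, so by Theorem \ref{adic}(i) $a_{4,\ell}$ is odd for every $\ell$, whence $a_{4,\ell}=1$ and then $a_{1,\ell}=a_{2,\ell}=a_{3,\ell}=0$, again matching $\ell_0$. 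In both cases $a_{n,\ell}=a_{n,\ell_0}$ for all $n$ and all $\ell$, so $\mathfrak g_\ell^{\der}$, and therefore $\mathfrak g_\ell$, is independent of $\ell$. The one genuinely delicate point is the case $D=4$: there Theorem \ref{adic} provides only the \emph{parity} of $a_{4,\ell}$, so the argument must first use the semisimple-rank identity to bound $a_{4,\ell}$ by $1$ before the parity information becomes decisive — which is precisely why Hypothesis \ref{A} restricts the number of $A_4$ factors of $\mathfrak g_{\ell_0}$ to at most one.
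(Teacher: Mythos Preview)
Your proof takes exactly the paper's approach --- combine the $\ell$-independence of the rank and semisimple rank (Corollary \ref{FMTC2}, Theorem \ref{bicharC}) with Theorem \ref{adic} --- and you have correctly spelled out the case analysis that the paper's one-line ``The corollary follows from Theorem \ref{adic}'' leaves to the reader.

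One small oversight: when you write $\mathfrak g_\ell^{\der}\cong\bigoplus_{n}A_n^{\oplus a_{n,\ell}}$ and then express the semisimple rank as $\sum_n n\,a_{n,\ell}$, you are assuming that $\mathfrak g_\ell$ is of type $A$ for \emph{every} $\ell$, whereas Hypothesis \ref{A} only asserts this at $\ell=\ell_0$. For general $\ell$ there could a priori be simple factors of type $B$, $C$, $D$, $E$, $F$, $G$. The fix is immediate: add a nonnegative term $r_\ell$ (the total rank contribution of non-$A$ simple factors of $\mathfrak g_\ell^{\der}$) to the left side of your displayed identity, so that
\[
a_{1,\ell}+2a_{2,\ell}+3a_{3,\ell}+4a_{4,\ell}+5a_{5,\ell}+7a_{7,\ell}+8a_{8,\ell}+r_\ell=D\in\{0,4\}.
\]
Your case analysis then goes through verbatim and forces $r_\ell=0$ as well (in the $D=4$ case, the parity argument still gives $a_{4,\ell}=1$, which already exhausts the budget).
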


\begin{proof}
By Corollary \ref{FMTC2} and Theorem \ref{bicharC}, the semisimple rank and the dimension of the center of $\mathfrak{g}_\ell$ are both independent of $\ell$.
The corollary follows from Theorem \ref{adic}.
\end{proof}

\subsection{$\ell$-independence of the mod $\ell$ images}\label{mod l}

Let $\ell \ge 5$ be a prime and $\LT$ a \emph{Lie type} (e.g., $A_n,B_n,C_n,D_n,...$). 
We define the \emph{$\LT$-type $\ell$-rank} function, $\rk^{\LT}_\ell$, and 
the \emph{total $\ell$-rank} function, $\rk_\ell$, on
finite groups. 
The dimension of an algebraic group $\bG /F$ as an $F$-variety is denoted by $\dim\bG$.
Let $\bar\Gamma$ be a finite simple group of Lie type in characteristic $\ell$. 
Then there exists an adjoint simple group $\bar{\bG}/\F_{\ell^f}$ such that
$$\bar\Gamma=\bar{\bG}(\F_{\ell^f})^{\der},$$
the derived group of the group of $\F_{\ell^f}$-rational points of $\bar{\bG}$.
By base change to $\bar\F_\ell$, we obtain
$$\bar{\bG}\times_{\F_{\ell^f}}\bar\F_\ell=\bar{\bH}^m,$$
where $\bar{\bH}$
is an $\bar\F_\ell$-adjoint simple group of some Lie type $\mathfrak{h}$.
We then set the $\LT$-type $\ell$-rank of $\bar\Gamma$ to be
\begin{equation*}
\rk^{\LT}_\ell\bar\Gamma :=\left\{ \begin{array}{lll}
 f\cdot\rk \bar{\bG} &\mbox{if}\hspace{.1in} \LT=\mathfrak{h},\\
 0 &\mbox{otherwise,}
\end{array}\right.
\end{equation*}
and the total $\ell$-rank of $\bar\Gamma$ to be 
$$\rk_\ell\bar\Gamma:=\sum_{\LT}\rk_\ell^{\LT}\bar\Gamma.$$
For simple groups which are not of Lie type in characteristic $\ell$, 
we define the $\LT$-type $\ell$-rank and the total $\ell$-rank to be zero.  
We extend the definitions
to arbitrary finite groups by defining the $\LT$-type $\ell$-rank and the total $\ell$-rank of any finite group to be the sum of the ranks of its composition factors.
This definition makes it clear that $\rk_\ell^{\LT}$ and $\rk_\ell$ are additive on short exact sequences of groups.  In particular, the $\LT$-type $\ell$-rank and the total $\ell$-rank of
every solvable finite group are zero.

Given a strictly compatible system $\{\Psi_\ell\}_\ell$, the monodromy group $\Psi_\ell(\Gal_K)$ is a compact subgroup of $\GL_k(\Q_\ell)$ which fixes some $\Z_\ell$-lattice of $\Q_\ell^k$ for all $\ell$. By some change of coordinates, we obtain for each $\ell$ a unique semisimple mod $\ell$ representation
$$\psi_\ell:\Gal_K\to\GL_k(\F_\ell)$$ 
by reduction mod $\ell$ and semisimplification (Brauer-Nesbitt \cite[Theorem 30.16]{CR88}). We then say that
 the mod $\ell$ system 
$\{\psi_\ell\}_\ell$ \emph{arises from the $\ell$-adic system} $\{\Psi_\ell\}_\ell$.
The following theorem is the mod $\ell$ analogue of (part of) Theorem \ref{bicharC} and Theorem \ref{adic}.

\begin{thm}\label{mod}\cite[Theorem A, Corollary B]{Hui15} 
Let  
 $\{\phi_\ell\}_{\ell\in\mathscr{P}}$ be the system of  mod $\ell$ representations arising from 
the system $\{\Phi_\ell^{\ss}\}_\ell$ 
and $\bG_\ell$ the 
 connected reductive algebraic monodromy group of $\Phi_\ell^{\ss}$ for all $\ell$. Denote the image of $\phi_\ell$ by $\bar\Gamma_\ell$, then the following statements hold for $\ell\gg0$.
\begin{enumerate}
\item[($i$)] The total $\ell$-rank $\rk_\ell\bar\Gamma_\ell$ of $\bar\Gamma_\ell$ is equal to the rank of $\bG_\ell^{\der}$ and is therefore independent of $\ell$.
\item[($ii$)] The $A_n$-type $\ell$-rank $\rk_\ell^{A_n}\bar\Gamma_\ell$ of $\bar\Gamma_\ell$ for $n\in\N\backslash\{1,2,3,4,5,7,8\}$ and 
the parity of $(\rk_\ell^{A_4}\bar\Gamma_\ell)/4$ are independent of $\ell$. 
\end{enumerate}
\end{thm}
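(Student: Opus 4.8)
The plan is to replace the finite group $\bar\Gamma_\ell$ by a connected semisimple algebraic group $\bar\cH_\ell$ over $\F_\ell$ which, for $\ell\gg1$, is the reduction of an integral model of $\bG_\ell^{\der}$, and then to transport to the mod $\ell$ side the $\ell$-independence statements already available over $\C$ (Corollary \ref{FMTC2}, Theorem \ref{bicharC}, Theorem \ref{adic}). First I would invoke the structure theory of finite subgroups of $\GL_k(\F_\ell)$ of Nori and Larsen--Pink: for $\ell\gg1$ there are a connected semisimple $\F_\ell$-subgroup $\bar\cH_\ell\subset\GL_{k,\F_\ell}$ and a normal subgroup $\bar\Gamma_\ell^{+}\subseteq\bar\Gamma_\ell$, generated by the elements of $\ell$-power order, with $[\bar\Gamma_\ell:\bar\Gamma_\ell^{+}]$ bounded in terms of $k$ and $\bar\Gamma_\ell^{+}$ equal, up to $\GL_k(\F_\ell)$-conjugacy, to $\bar\cH_\ell(\F_\ell)^{+}$, the subgroup of $\bar\cH_\ell(\F_\ell)$ generated by its $\ell$-elements. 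Since $\bar\Gamma_\ell^{+}$ contains a Sylow $\ell$-subgroup of $\bar\Gamma_\ell$, the index $[\bar\Gamma_\ell:\bar\Gamma_\ell^{+}]$ is prime to $\ell$, and similarly for $[\bar\cH_\ell(\F_\ell):\bar\cH_\ell(\F_\ell)^{+}]$; as every finite simple group of Lie type in characteristic $\ell$ has order divisible by $\ell$, Jordan--H\"older yields $\mathrm{Lie}_\ell\bar\Gamma_\ell=\mathrm{Lie}_\ell\bar\cH_\ell(\F_\ell)$, hence $\rk_\ell^{\LT}\bar\Gamma_\ell=\rk_\ell^{\LT}\bar\cH_\ell(\F_\ell)$ for every Lie type $\LT$ and $\rk_\ell\bar\Gamma_\ell=\rk_\ell\bar\cH_\ell(\F_\ell)$.

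Next I would pin down $\bar\cH_\ell$. The image $\Phi_\ell^{\ss}(\Gal_K)$ is a compact Zariski-dense subgroup of the reductive group $\bG_\ell(\Q_\ell)$, so for $\ell\gg1$ one may choose (Larsen--Pink, with the inputs of $\mathsection\ref{FT}$ and $\mathsection\ref{l-adic}$) a reductive $\Z_\ell$-model $\cG_\ell$ of $\bG_\ell$ with $\Phi_\ell^{\ss}(\Gal_K)\subset\cG_\ell(\Z_\ell)$ such that $\bar\Gamma_\ell^{+}$ is $\GL_k(\F_\ell)$-conjugate to $\cG_\ell^{\der}(\F_\ell)^{+}$; thus $\bar\cH_\ell$ is $\GL_k(\bar\F_\ell)$-conjugate, after base change to $\bar\F_\ell$, to the reduction of $\cG_\ell^{\der}$. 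The one non-formal point is that this reduction still has rank $\rk\bG_\ell^{\der}$, which I would get from the Frobenius tori: the eigenvalues of $\Phi_\ell^{\ss}(\mathrm{Frob}_{\bar v})$ are the roots of the fixed polynomial $P_v(x)\in\Q[x]$ of Definition \ref{comsys}(ii), hence $\ell$-integral and with multiplicative relations among them independent of $\ell$, so a place $v$ for which $\bH_{\bar v,\ell}$ is a maximal torus of $\bG_\ell$ for all $\ell\gg1$ (Corollary \ref{FMTC1}) still reduces, for $\ell$ avoiding the finitely many primes dividing the relevant resultants and discriminants, to a torus of dimension $\rk\bG_\ell$. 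In particular, for $\ell\gg1$ the absolute root datum of $\bar\cH_\ell$ and the multiset of its weights on $\bar\F_\ell^{k}$ coincide with the absolute root datum of $\bG_\ell^{\der}$ and its weights on $\bar\Q_\ell^{k}$. Assertion (i) is now immediate: $\rk_\ell\bar\Gamma_\ell=\rk\bar\cH_\ell=\rk\bG_\ell^{\der}$, which is independent of $\ell$ by Theorem \ref{bicharC}.

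For (ii), note that the formal character of $\bar\cH_\ell\subset\GL_{k,\bar\F_\ell}$ (in the sense of \cite[$\mathsection2.1$]{Hu13}) is, for $\ell\gg1$, the mod $\ell$ reduction of the formal character of $\bG_\ell^{\der}\times_{\Q_\ell}\C\subset\GL_{k,\C}$ and is therefore independent of $\ell$ by Corollary \ref{FMTC2} and Theorem \ref{bicharC}. The combinatorial analysis of \cite[$\mathsection2$]{Hu13} behind Theorem \ref{adic} --- the study of the positions of roots in the weight space --- involves only the weight lattice and the root system attached to the formal character, so it is insensitive to the characteristic and applies verbatim to $\bar\cH_\ell$ over $\bar\F_\ell$; it shows that the number $a_n(\bar\cH_\ell)$ of $A_n$-factors of $\bar\cH_\ell$ is determined by this formal character for $n\in\N\backslash\{1,2,3,4,5,7,8\}$, and that the parity of $a_4(\bar\cH_\ell)$ is so determined. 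Finally, for $\ell\gg1$ and any connected semisimple $\F_\ell$-group $\bar\cH$ one has $\rk_\ell^{A_n}\bar\cH(\F_\ell)=n\cdot a_n(\bar\cH)$ irrespective of how $\Gal(\bar\F_\ell/\F_\ell)$ permutes the absolutely simple factors of $\bar\cH$ (the value $\rk_\ell^{A_n}\SL_{n+1}(\F_{\ell^f})=fn$ recorded in $\mathsection\ref{mod l}$ is the case $a_n=f$). Combining these facts, $\rk_\ell^{A_n}\bar\Gamma_\ell=n\cdot a_n(\bar\cH_\ell)$ is independent of $\ell$ for $n\in\N\backslash\{1,2,3,4,5,7,8\}$ and $(\rk_\ell^{A_4}\bar\Gamma_\ell)/4=a_4(\bar\cH_\ell)$ has $\ell$-independent parity.

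The hard part is the uniformity in $\ell$ of the identification of $\bar\cH_\ell$ with the reduction of $\bG_\ell^{\der}$: one must ensure that a single place $v$ (or a bounded set of places) serves for all large $\ell$ in the Frobenius-torus step, that the integral models $\cG_\ell$ and their reductions are compatible, and that the primes to be discarded --- those dividing orders of component and centre groups, denominators of the multiplicative relations among the eigenvalues of the $P_v$, the characteristics where distinct Lie types give isomorphic simple groups, and the small primes where the Larsen--Pink bounds or the relevant Weyl modules misbehave --- genuinely form a finite set. This is exactly where the bound $\ell\gg1$ is consumed, and it rests on the Larsen--Pink theory of finite subgroups of algebraic groups together with the $\ell$-independence results of $\mathsection\ref{FT}$ and $\mathsection\ref{l-adic}$.
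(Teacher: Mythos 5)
This theorem is not proved in the present paper; it is quoted verbatim from \cite[Theorem A, Corollary B]{Hu14}, so there is no in-text proof against which to compare your sketch. That said, your reconstruction is a reasonable approximation of the argument in \cite{Hu14}: the Nori/Larsen--Pink structure theory for finite subgroups of $\GL_k(\F_\ell)$ to replace $\bar\Gamma_\ell$ by a connected semisimple $\F_\ell$-group $\bar\cH_\ell$ of the same $\mathrm{Lie}_\ell$-invariants, the hyperspecial/integral-model and Frobenius-torus input to identify $\bar\cH_\ell$ (over $\bar\F_\ell$) with the reduction of $\bG_\ell^{\der}$, and then the transport of the formal-character combinatorics from characteristic $0$ to characteristic $\ell$ to conclude.

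Two small points. First, the claim that $[\bar\Gamma_\ell:\bar\Gamma_\ell^{+}]$ is bounded in terms of $k$ is not what Nori/Larsen--Pink give and is not needed: the quotient can contain a large abelian prime-to-$\ell$ part; all you use, correctly, is that $\bar\Gamma_\ell^+$ contains a Sylow $\ell$-subgroup, so the quotient has order prime to $\ell$ and contributes nothing to $\mathrm{Lie}_\ell$. Second, once you have identified the absolute root datum of $\bar\cH_\ell$ with that of $\bG_\ell^{\der}\times_{\Q_\ell}\C$, part (ii) follows immediately from Theorem \ref{adic} applied to $\bG_\ell$; re-running the weight-space analysis of \cite[$\mathsection2$]{Hu13} directly on $\bar\cH_\ell$ over $\bar\F_\ell$ is harmless but superfluous. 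These are streamlinings, not gaps, and your identification of where the threshold $\ell\gg 1$ is consumed is accurate.
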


\subsection{Maximality of the $\ell$-adic images}\label{2.6}

Let $\{\Phi_\ell\}_\ell$ be the system (\ref{1}).
The representation $\Phi_\ell^{\ss}$ and the algebraic monodromy group $\bG_\ell$ 
are said to be of type A if every simple factor of 
$\mathfrak{g}_\ell:=\mathrm{Lie}(\bG_\ell\times_{\Q_\ell}\C)$ is of type $A_n$.
The maximality of the monodromy group $\Gamma_\ell$ inside the 
$\ell$-adic Lie group $\bG_\ell(\Q_\ell)$ is studied in \cite{HL16} assuming that $\bG_\ell$ is of type A. 
A reductive group $\bH/\Q_\ell$ is said to be \emph{unramified} if
it is quasi-split over $\Q_\ell$ and splits over an unramified extension of $\Q_\ell$.

\begin{thm}\label{max}\cite[Main theorem]{HL16}
Let $\{\Phi_\ell\}_\ell$ be the system (\ref{1}). For all sufficiently large $\ell$, if  $\bG_\ell$ is of type A, then $\Gamma_\ell^{\sc}$ is 
a hyperspecial maximal compact subgroup of $\bG_\ell^{\sc}(\Q_\ell)$ and $\bG_\ell^{\sc}$ is unramified over $\Q_\ell$.
\end{thm}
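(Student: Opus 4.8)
The plan is to prove Theorem \ref{max} by reducing the maximality statement to the case of a single quasi-simple factor and then invoking known results on hyperspecial subgroups of $\ell$-adic groups. First I would recall that for $\ell$ sufficiently large, the Galois image $\Gamma_\ell$ is a compact subgroup of $\bG_\ell(\Q_\ell)$ whose Zariski closure is all of $\bG_\ell$, and that a theorem of Larsen--Pink (combined with the strict compatibility and the boundedness of the eigenvalues coming from Weil's Riemann hypothesis, i.e. the purity of \'etale cohomology) forces $\Gamma_\ell$ to be, up to bounded index, very large inside $\bG_\ell(\Q_\ell)$ for $\ell \gg 1$. Concretely, the key input is the Nori--Larsen--Pink style dichotomy: either $\Gamma_\ell$ is close to a product of finite simple groups of Lie type in characteristic $\ell$ matching the root datum of $\bG_\ell$, or one falls into a bounded exceptional list; the Frobenius torus argument (Theorem \ref{FMT} and its corollaries) together with the type A hypothesis rules out the small and exceptional cases for $\ell \gg 1$.

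The core step is then the identification of $\Gamma_\ell^{\sc}$ with a hyperspecial maximal compact. The strategy is: (1) pass to $\bG_\ell^{\sc}$, which under the type A hypothesis is a product of quasi-simple simply connected groups of inner or outer type $A$, each of the form $\SL_m$ or $\SU_m$ of a central division/cyclic algebra over an unramified extension of $\Q_\ell$; (2) show that the Galois representation is unramified at $\ell$ for $\ell \gg 1$ (this is where Fontaine--Laffaille theory / crystallinity of \'etale cohomology with small Hodge--Tate weights relative to $\ell$ enters, giving that the representation is crystalline with bounded weights and hence, after the monodromy-weight considerations, that $\bG_\ell^{\sc}$ splits over an unramified extension); (3) invoke the result that a compact subgroup of $\bG^{\sc}(\Q_\ell)$ which surjects onto the right finite group of Lie type modulo $\ell$ and has full Zariski closure must, for $\ell \gg 1$, be a hyperspecial maximal compact — this uses that such groups are their own normalizers and that the mod $\ell$ reduction theory (Serre, Nori) pins down the group once the residual image is known to contain the corresponding $\bar\F_\ell$-points of the Chevalley group. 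The case analysis over inner versus outer forms is handled by noting that for type $A_n$, $n \notin \{1,2,3,5,7,8\}$ and with at most one $A_4$, the representation-theoretic constraints (formal bi-character invariance, minuscule or quasi-minuscule weight combinatorics) force the local form at $\ell$ to be quasi-split and unramified.

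The main obstacle I expect is controlling the residual (mod $\ell$) image precisely enough: one must show that for $\ell \gg 1$ the image $\bar\Gamma_\ell^{\sc}$ inside $\bG_\ell^{\sc}(\F_\ell)$ actually contains the derived group $\bG_\ell^{\sc}(\F_\ell)^{\der}$ (not merely has the right $\ell$-rank), since only then can one lift via the congruence-subgroup / deformation argument to conclude $\Gamma_\ell^{\sc} = \cG^{\sc}(\Z_\ell)$ is the full hyperspecial group. This requires ruling out proper subgroups of the finite group of Lie type with the same Lie-theoretic rank, which is where the exclusion of small rank type $A$ factors in Hypothesis-style restrictions (here the type A assumption plus $\ell \gg 1$) does the work, via the classification of subgroups of finite classical groups (Aschbacher's theorem) together with the fact that the image is generated by its Frobenius tori, which are maximal tori for a density-one set of places. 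I would organize the write-up so that the unramifiedness of $\bG_\ell^{\sc}$ over $\Q_\ell$ is established first (crystalline comparison plus the purity of the system), then the hyperspeciality, since the former feeds into identifying which hyperspecial conjugacy class $\Gamma_\ell^{\sc}$ lands in.

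Finally, to assemble the conclusion: once $\bG_\ell^{\sc}$ is unramified over $\Q_\ell$, choose a reductive model $\cG^{\sc}$ over $\Z_\ell$; the subgroup $\Gamma_\ell^{\sc} \subset \bG_\ell^{\sc}(\Q_\ell)$ is compact, hence contained in some maximal compact, and the mod $\ell$ surjectivity onto $\cG^{\sc}(\F_\ell)^{\der}$ together with Nakayama-type lifting along the congruence filtration (the successive quotients being $\F_\ell$-vector spaces on which $\cG^{\sc}(\F_\ell)$ acts with no invariants/coinvariants for $\ell \gg 1$, by the irreducibility of the adjoint representation for type $A$ in large characteristic) forces $\Gamma_\ell^{\sc} = \cG^{\sc}(\Z_\ell)$, which is hyperspecial maximal compact in $\bG_\ell^{\sc}(\Q_\ell)$ by definition. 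This also yields the statement of Corollary \ref{newcor} as an immediate byproduct.
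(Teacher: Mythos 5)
Theorem~\ref{max} is \emph{not} proved in this paper; it is quoted as a black-box result from the companion paper \cite[Main theorem]{HL14a}, so there is no internal argument for you to match against. The paper's ``proof'' is simply the citation. Your sketch is a plausible reconstruction of the kind of machinery one expects in \cite{HL14a} (strict compatibility and purity controlling Frobenius eigenvalues; Frobenius tori supplying maximal tori of $\bG_\ell$; crystalline/Fontaine--Laffaille input to get unramifiedness at $\ell$; a Nori/Larsen--Pink analysis of the residual image combined with Aschbacher-type classification to force the mod~$\ell$ image to contain the derived group; and a pro-$\ell$ Frattini/congruence-filtration bootstrap to pass from the special fiber to the full hyperspecial group). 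As a high-level outline this is consistent with what that paper does, and there is nothing obviously unworkable in the strategy.

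Two points to flag. First, you conflate Hypothesis~\ref{A} with the ``type A'' hypothesis: Theorem~\ref{max} requires only that every simple factor of $\mathfrak g_\ell$ be of type $A_n$ for some $n$, with \emph{no} exclusion of the small ranks $n\in\{1,2,3,5,7,8\}$ and \emph{no} constraint on the number of $A_4$ factors. Those exclusions belong to Hypothesis~\ref{A}, which is used for the $\ell$-independence theorems (Theorem~\ref{rdr}, Corollary~\ref{similar}, Theorem~\ref{main}) but not for Theorem~\ref{max}. So in your sketch the step where the ``exclusion of small rank type $A$ factors'' is said to do the work in ruling out proper subgroups of the finite group of Lie type cannot be correct as stated; the actual argument has to handle \emph{all} type~A ranks, and the rank-restriction part of Hypothesis~\ref{A} is not available. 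Second, your closing remark that Corollary~\ref{newcor} ``also'' follows immediately is logically backwards in this paper's architecture: Corollary~\ref{newcor} requires the global quasi-split $\Q$-form $\bG_\Q^{\sc}$ from Theorem~\ref{main}(ii), which itself is proved \emph{using} Theorem~\ref{max} (via Corollary~\ref{maxcor}). Theorem~\ref{max} alone gives a local identification $\Gamma_\ell^{\sc}\cong$ hyperspecial subgroup of $\bG_\ell^{\sc}(\Q_\ell)$ for each large $\ell$, but not the uniform statement $\Gamma_\ell^{\sc}\cong\cG^{\sc}(\Z_\ell)$ for a single group scheme $\cG^{\sc}$ over $\Z[\frac{1}{N}]$.
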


\begin{cor}\label{maxcor}
For all sufficiently large $\ell$, if $\bG_\ell$ is of type A, then $\bG_\ell$ is unramified.
\end{cor}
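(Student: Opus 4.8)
The plan is to establish the two defining properties of an unramified reductive group over the local field $\Q_\ell$: that $\bG_\ell$ is quasi-split over $\Q_\ell$, and that $\bG_\ell$ splits over the maximal unramified extension $\Q_\ell^{\mathrm{nr}}$. The point that requires care is that Theorem \ref{max} controls only the simply connected part $\bG_\ell^{\sc}$, so the central (radical) torus of $\bG_\ell$ must be handled separately; this is the main step, and I would resolve it not via Theorem \ref{max} but via the method of Frobenius tori (Corollary \ref{FMTC3}).

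For quasi-splitness: by Theorem \ref{max}, for $\ell\gg1$ with $\bG_\ell$ of type A the group $\bG_\ell^{\sc}$ is unramified over $\Q_\ell$, hence in particular quasi-split. Since quasi-splitness of a connected reductive group is equivalent to that of its adjoint group, and $\bG_\ell$ and $\bG_\ell^{\sc}$ share the same adjoint group, $\bG_\ell$ is quasi-split over $\Q_\ell$. (One could equally cite the quasi-splitness of type A monodromy groups recalled from \cite{HL14a} in the introduction.)

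For splitting over $\Q_\ell^{\mathrm{nr}}$: a connected reductive group that contains a split maximal torus is split, so it is enough to exhibit one maximal torus of $\bG_\ell$ that splits over $\Q_\ell^{\mathrm{nr}}$. Applying Corollary \ref{FMTC3} to the semisimple strictly compatible system $\{\Phi_\ell^{\ss}\}_\ell$ produces a fixed $\Q$-torus $\bT_\Q\subset\GL_{k,\Q}$ such that, for every sufficiently large $\ell$, some maximal torus $\bT_\ell$ of $\bG_\ell$ satisfies $\bT_\ell\cong\bT_\Q\times_\Q\Q_\ell$. Fix a finite Galois extension $L/\Q$ splitting $\bT_\Q$. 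For every $\ell$ unramified in $L$ — all but finitely many — each completion of $L$ at a place above $\ell$ is an unramified extension of $\Q_\ell$, so $\bT_\Q\times_\Q\Q_\ell$, and hence $\bT_\ell$, splits over $\Q_\ell^{\mathrm{nr}}$. Therefore $\bG_\ell$, containing the $\Q_\ell^{\mathrm{nr}}$-split maximal torus $\bT_\ell$, splits over $\Q_\ell^{\mathrm{nr}}$.

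Putting these together, for all sufficiently large $\ell$ with $\bG_\ell$ of type A the group $\bG_\ell$ is quasi-split over $\Q_\ell$ and splits over $\Q_\ell^{\mathrm{nr}}$, i.e.\ it is unramified. The substantive step — and the only one that is not a formal consequence of Theorem \ref{max} — is the control of the radical torus, obtained from the $\Q$-rationality of a maximal torus of $\bG_\ell$ (Corollary \ref{FMTC3}) together with the elementary fact that a $\Q$-torus acquires an unramified splitting field at every prime not dividing the discriminant of its splitting field.
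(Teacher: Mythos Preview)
Your proof is correct and follows essentially the same route as the paper's own argument: the paper also invokes Theorem~\ref{max} to get that $\bG_\ell^{\sc}$ is unramified (whence $\bG_\ell$ is quasi-split via the isogeny to $\bG_\ell^{\der}$ and the $\Q_\ell$-rationality of the center), and likewise appeals to Corollary~\ref{FMTC3} to show that $\bG_\ell$ splits over an unramified extension. Your version is simply a more explicit unpacking of the same two ingredients; the only cosmetic difference is that you phrase the passage from $\bG_\ell^{\sc}$ to $\bG_\ell$ via the common adjoint group rather than via the isogeny plus center.
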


\begin{proof}
For all sufficiently large $\ell$, $\bG_\ell^{\sc}$ is unramified over $\Q_\ell$  by Theorem \ref{max} and $\bG_\ell$ splits over an unramified extension by Corollary \ref{FMTC3}. Since the natural morphism $\bG_\ell^{\sc}\to\bG_\ell^{\der}$ is a $\Q_\ell$-isogeny and the center of $\bG_\ell$ is defined over $\Q_\ell$, $\bG_\ell$ is unramified for $\ell\gg0$.
\end{proof}

\begin{remark}
If $X$ is an abelian variety, then the conclusions of Theorem \ref{max} hold without any type A assumption on $\bG_\ell$ (forthcoming).
\end{remark}

\section{$\ell$-independence of $\bG_\ell\subset\GL_{k,\Q_\ell}$}\label{s3}
Let $\{\Phi_\ell\}_\ell$ be the system (\ref{1}). The algebraic  monodromy group $\bG_\ell$ of $\Phi_\ell^{\ss}$
is a reductive subgroup of $\GL_{k,\Q_\ell}$. We suppose $\bG_\ell$ is connected for all $\ell$ by requiring $K=K^{\mathrm{conn}}$. 
We embed $\Q_\ell$ in $\C$ for all $\ell$, then 
$\bG_\ell\times_{\Q_\ell}\C$ is a subgroup of $\GL_{k,\C}$ for all $\ell$
and the isomorphism class of the formal bi-characters of $\bG_\ell\times_{\Q_\ell}\C$ is independent of $\ell$ by Theorem \ref{bicharC}.
If $\bG_\ell\times_{\Q_\ell}\C$ is semisimple and the tautological representation on $\C^k$ is irreducible for all $\ell$, 
then a formal character 
determines the root lattice and the set of short roots of $\bG_\ell\times_{\Q_\ell}\C$ \cite[$\mathsection4$ Proposition]{LP90}.
In a lot of cases, 
the above information determines the root system of $\bG_\ell\times_{\Q_\ell}\C$ and the representation $\bG_\ell\times_{\Q_\ell}\C\subset\GL_{k,\C}$ \cite[Theorem 4]{LP90}, which implies that the
conjugacy class of $\bG_\ell\times_{\Q_\ell}\C$ in $\GL_{k,\C}$ is independent of $\ell$.
The purpose of this section is to prove that if Hypothesis \ref{A} holds, then
the formal bi-character 
$$\bT^{\ss}_\C\subset\bT_\C\subset\GL_{k,\C}$$
of $\bG_\ell\times_{\Q_\ell}\C$ (Theorem \ref{bicharC}) determines the root datum \cite[$\mathsection1$]{Sp79} 
of $(\bG_\ell\times_{\Q_\ell}\C,\bT_\C)$
and the conjugacy class of $\bG_\ell\times_{\Q_\ell}\C$ (in $\GL_{k,\C}$) for all $\ell$ (Theorem \ref{rdr}).
All these are based on the crucial root computations in \cite[$\mathsection2$]{Hui13}, which will be explained below.

\subsection{The invariance of the roots in the weight space}\label{s3.1}

Let $\mathfrak g$ and $\mathfrak g'$ be two complex semisimple subalgebras of $\End_k(\C)$. 
Suppose $\mathfrak t\subset \End_k(\C)$ is a common Cartan subalgebra of $\mathfrak g$ and $\mathfrak g'$. 
The following notations are defined with respect to $\mathfrak t$. 
Let $R$ and $W$ (resp. $R'$ and $W'$) be the roots and Weyl group of $\mathfrak g$ (resp. $\mathfrak g'$) respectively.
The semisimple Lie algebras $\mathfrak g$ and $\mathfrak g'$  have the same weight lattice $\Lambda\subset\mathfrak t^*$,
generated by the weights $\{\alpha_1,...,\alpha_k\}$ of the faithful representation $\mathfrak t\subset \End_k(\C)$.
Therefore, we say the
faithful representations $\mathfrak g\subset \End_k(\C)$ and $\mathfrak g'\subset\End_k(\C)$ have identical formal character (\cite[$\mathsection2.1$]{Hui13}) and we define
\begin{equation*}\label{fcLie}
\mathrm{Char}(\C^k):=\alpha_1+\alpha_2+\cdots+\alpha_k\in\Z[\Lambda].
\end{equation*}
Since the weights in $\mathrm{Char}(\C^k)$ generate the \emph{weight space} $\Lambda\otimes_\Z\R$, one can define a positive definite
inner product $((~,~))$ on $\Lambda\otimes_\Z\R$ that is isomorphic to the $\R$-span of $\Lambda$ in $\mathfrak t^*$ such that 
the finite subgroup of $\GL(\Lambda\otimes_\Z\R)$ preserving $\mathrm{Char}(\C^k)$ is orthogonal \cite[$\mathsection2.3$]{Hui13}. 
Let $\{\mathfrak q_i\}_i$ and $\{\mathfrak q_j'\}_j$ be the multiset of simple factors of $\mathfrak g$ and $\mathfrak g'$ respectively.
Denote by $R_i$, $\Lambda_i$, and $\Lambda_i\otimes_\Z\R$ (resp. $R_j'$, $\Lambda_j'$, and $\Lambda_j'\otimes_\Z\R$) the roots, 
the weight lattice, and the
weight space of the simple Lie algebra $\mathfrak q_i$ (resp. $\mathfrak q_j'$) with respect to $\mathfrak t\cap\mathfrak q_i$ (resp. $\mathfrak t\cap\mathfrak q_j'$)  respectively. 
Then $\Lambda_i\otimes_\Z\R$ (resp. $\Lambda_j'\otimes_\Z\R$) 
can be identified as a subspace of $\Lambda\otimes_\Z\R$. 
We obtain $R=\bigcup_i R_i$ (resp. $R'=\bigcup_j R_j'$).

\begin{lemma}\label{Killing}
\begin{enumerate}[($i$)]
\item The weight subspaces $\Lambda_{i_1}\otimes_\Z\R$ and $\Lambda_{i_2}\otimes_\Z\R$ of $\Lambda\otimes_\Z\R$ are orthogonal with respect to $((~,~))$ whenever $i_1\neq i_2$.
\item Denote by $(~,~)_i$ the inner product  on $\Lambda_i\otimes_\Z\R$ induced by the Killing form of $\mathfrak q_i$. Then 
$c_i(~,~)_i=((~,~))$ on $\Lambda_i\otimes_\Z\R$ for some $c_i>0$.
\item Denote by $(~,~)$ the inner product  on $\Lambda\otimes_\Z\R$ induced by the Killing form of $\mathfrak g$. Then 
$\Lambda_{i_1}\otimes_\Z\R$ and $\Lambda_{i_2}\otimes_\Z\R$ of $\Lambda\otimes_\Z\R$ are orthogonal with respect to $(~,~)$ whenever $i_1\neq i_2$. Since the set of subspaces $\{\Lambda_i\otimes_\Z\R\}_i$ are pairwise orthogonal with respect to the positive definite inner products $((~,~))$ and $(~,~)$, we conclude that $((~,~))$ determines $(~,~)$ up to a positive factor on each $\Lambda_i\otimes_\Z\R $ for all $i$.
\end{enumerate}
\end{lemma}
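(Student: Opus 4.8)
The plan is to reduce everything to a single fact: the subgroup $\mathcal{W}\subset\GL(\Lambda\otimes_\Z\R)$ of orthogonal transformations preserving $\mathrm{Char}(\C^k)$ contains the Weyl groups $W$ and $W'$, because any element of $W$ (resp. $W'$) permutes the weights of the representation $\mathfrak{g}\subset\End_k(\C)$ (resp. $\mathfrak{g}'\subset\End_k(\C)$) and hence fixes $\mathrm{Char}(\C^k)=\alpha_1+\cdots+\alpha_k$. For part ($i$), I would argue as follows. The Weyl group $W$ of $\mathfrak{g}$ is the direct product $\prod_i W_i$ of the Weyl groups of the simple factors, and $W_i$ acts as the identity on $\Lambda_{i'}\otimes_\Z\R$ for $i'\neq i$ while acting as the full Weyl group $W_i$ on $\Lambda_i\otimes_\Z\R$. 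Fix $i_1\neq i_2$; take $v\in\Lambda_{i_1}\otimes_\Z\R$ and $w\in\Lambda_{i_2}\otimes_\Z\R$. Pick a simple reflection $s\in W_{i_1}$ (equivalently an element of $\mathcal{W}$) that moves $v$, i.e. $s(v)\neq v$; since $s$ fixes $w$ and is orthogonal for $((~,~))$, we get $((v,w))=((s(v),s(w)))=((s(v),w))$, hence $((v-s(v),w))=0$. As $v$ ranges over $\Lambda_{i_1}\otimes_\Z\R$ and $s$ over $W_{i_1}$, the vectors $v-s(v)$ span $\Lambda_{i_1}\otimes_\Z\R$ (this is the standard fact that a root system spans the space on which its Weyl group acts irreducibly and nontrivially — concretely, $v-s_\alpha(v)$ is a multiple of the coroot direction $\alpha$, and the $\alpha\in R_{i_1}$ span $\Lambda_{i_1}\otimes_\Z\R$). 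Therefore $((v',w))=0$ for all $v'\in\Lambda_{i_1}\otimes_\Z\R$ and all $w\in\Lambda_{i_2}\otimes_\Z\R$, which is orthogonality.

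For part ($ii$), I would invoke the irreducibility of the $W_i$-action on $\Lambda_i\otimes_\Z\R$ together with a Schur-type lemma for bilinear forms. Restricting to the subspace $\Lambda_i\otimes_\Z\R$, both the Killing-induced inner product $(~,~)_i$ and the restriction of $((~,~))$ are positive definite and $W_i$-invariant (the first by construction, the second because $W_i\subset\mathcal{W}$ acts orthogonally for $((~,~))$). Since $W_i$ acts irreducibly on $\Lambda_i\otimes_\Z\R$, any two $W_i$-invariant symmetric bilinear forms on this space are proportional; positive definiteness forces the scalar to be positive, giving $c(~,~)_i=((~,~))$ on $\Lambda_i\otimes_\Z\R$ for some $c=c_i>0$.

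Part ($iii$) is then essentially bookkeeping. The inner product $(~,~)$ on $\Lambda\otimes_\Z\R$ induced by the Killing form of $\mathfrak{g}$ restricts on each $\Lambda_i\otimes_\Z\R$ to a positive multiple of $(~,~)_i$ (the Killing form of $\mathfrak{g}$ restricted to a simple factor is a positive multiple of the Killing form of that factor), and it makes the factors $\{\Lambda_i\otimes_\Z\R\}_i$ pairwise orthogonal, since the simple factors are orthogonal ideals under the Killing form of $\mathfrak{g}$. Combining with ($i$) and ($ii$): both $((~,~))$ and $(~,~)$ are block-diagonal with respect to the orthogonal decomposition $\Lambda\otimes_\Z\R=\bigoplus_i\Lambda_i\otimes_\Z\R$, and on each block each is a positive multiple of $(~,~)_i$; hence on each block $(~,~)$ is a positive multiple of $((~,~))$, with the factor depending on $i$. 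This is exactly the assertion that $((~,~))$ determines $(~,~)$ up to a positive scalar on each $\Lambda_i\otimes_\Z\R$.

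The only genuinely substantive point — and the one I would want to state carefully rather than wave at — is the irreducibility of the Weyl group action of a simple factor on its own weight space, i.e. that $R_i$ is an irreducible root system spanning $\Lambda_i\otimes_\Z\R$; everything else is orthogonality bookkeeping. The subtlety to watch is that $\mathcal{W}$ may be strictly larger than $W$ (and larger than $W\cdot W'$), but that never hurts us: we only ever use the inclusions $W,W'\subset\mathcal{W}$ to transport orthogonality of $((~,~))$, never any reverse containment.
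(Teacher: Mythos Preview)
Your proposal is correct and follows essentially the same approach as the paper. The paper's proof is terse: for (i) it notes in one line that $W$ preserves $\mathrm{Char}(\C^k)$ and hence the subspaces are orthogonal for $((~,~))$, for (ii) it cites Bourbaki [VI \S1 Prop.~5 Cor.~(i)], and for (iii) it invokes the definition of the Killing form and then (i),(ii); your version simply unpacks these steps --- the reflection/span argument for (i) and the Schur-for-bilinear-forms argument for (ii) --- rather than citing them, which is fine.
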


\begin{proof}
Since $W$ preserves $\mathrm{Char}(\C^k)$, the weight subspaces $\Lambda_{i_1}\otimes_\Z\R$ and $\Lambda_{i_2}\otimes_\Z\R$ are orthogonal with respect to $((~,~))$. This proves (i).
Assertion (ii) follows from \cite[VI $\mathsection1$ Prop. 5 Cor. (i)]{Bou81}.
For (iii), by definition of the Killing form, the
weight subspaces $\Lambda_{i_1}\otimes_\Z\R$ and $\Lambda_{i_2}\otimes_\Z\R$ are orthogonal with respect to $(~,~)$.
The conclusion of (iii) then follows from (i) and (ii).
\end{proof}

The following result is obtained implicitly in \cite[$\mathsection2$]{Hui13}. Since it is crucial to Proposition \ref{rdss}, we make it explicit.

\begin{prop}\label{rootcomp}
If each simple factor $\mathfrak q_i$ of $\mathfrak g$ is of type $A_n$ for some $n\in\N\backslash\{1,2,3,5,7,8\}$ and $\mathfrak g$ has at most one $A_4$ factor (the conditions in Hypothesis \ref{A}), then $\mathfrak g$ is isomorphic to $\mathfrak g'$ and there is a one to one correspondence between the two multisets $\{\mathfrak q_i\}_i$ and $\{\mathfrak q_j'\}_j$, 
denoted by $\{\mathfrak q_i \leftrightarrow \mathfrak q_i'\}_i$ such that the following conditions hold:
\begin{enumerate}[($i$)]
\item $\Lambda_i\otimes_\Z\R=\Lambda_i'\otimes_\Z\R$ as subspace of $\Lambda\otimes_\Z\R$ for all $i$;
\item $\mathfrak g_i$ is isomorphic to $\mathfrak g_i'$ for all $i$;
\item $R_i=R_i'$ as a subset of $\Lambda\otimes_\Z\R$ for all $i$;
\item $R=R'$ as a subset of $\Lambda\otimes_\Z\R$.
\end{enumerate}
\end{prop}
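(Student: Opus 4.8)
The plan is to reduce the statement to a combinatorial fact about finite subsets of a Euclidean space, namely the set $\mathrm{Char}(\C^k)$ of weights, and then run the root computations of \cite[$\mathsection2$]{Hu13} in reverse. First I would fix the common Cartan subalgebra $\mathfrak t$ and the weight lattice $\Lambda$, and transport everything to the weight space $E:=\Lambda\otimes_\Z\R$ equipped with the inner product $((~,~))$ characterized by the fact that the subgroup $\Gamma\subset\GL(E)$ preserving $\mathrm{Char}(\C^k)$ is orthogonal. The crucial point is that $((~,~))$ is an invariant of the pair $(\C^k,\mathfrak t)$ alone, so it is simultaneously adapted to $\mathfrak g$ and to $\mathfrak g'$: in particular $W\subset\Gamma$ and $W'\subset\Gamma$ act orthogonally. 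By Lemma \ref{Killing}, on each simple summand $\Lambda_i\otimes_\Z\R$ of $\mathfrak g$ (and likewise for $\mathfrak g'$) the Killing-form inner product agrees with $((~,~))$ up to a positive scalar, and the simple summands are mutually $((~,~))$-orthogonal; so the root system $R=\bigsqcup_i R_i$ sits inside $E$ as an \emph{orthogonal} disjoint union of irreducible root systems, each realized in its own Killing metric up to scale, and the same holds for $R'=\bigsqcup_j R'_j$.

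The next step, which is the technical heart, is the explicit determination — carried out in \cite[$\mathsection2$]{Hu13} — of how the roots of an $A_n$ factor sit inside the weight space relative to $\mathrm{Char}(\C^k)$. The hypothesis on the types ($A_n$ with $n\notin\{1,2,3,5,7,8\}$, at most one $A_4$) is exactly the list of cases where this position is rigid: given $\mathrm{Char}(\C^k)$ and the metric $((~,~))$, the subset of $E$ that can arise as the root system of an $A_n$-factor of a semisimple subalgebra of $\End_k(\C)$ with that formal character is uniquely pinned down, and moreover distinct factors occupy distinct, mutually orthogonal subspaces that are forced by the combinatorics of $\mathrm{Char}(\C^k)$. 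I would quote this computation verbatim from \cite[$\mathsection2$]{Hu13}, applying it once to $\mathfrak g$ and once to $\mathfrak g'$: since both produce the \emph{same} configuration of orthogonal irreducible pieces inside $(E,((~,~)))$, we get a bijection $\{\mathfrak q_i\}_i\leftrightarrow\{\mathfrak q'_j\}_j$ matching each $A_n$-piece of $\mathfrak g$ with the $A_n$-piece of $\mathfrak g'$ occupying the same subspace, and under this bijection $R_i$ and $R'_i$ are literally the same subset of $E$. This gives (ii), and (i) is then immediate: $R_i=R'_i$ as an abstract root system forces $\mathfrak q_i\cong\mathfrak q'_i$ (they are both simple of type $A_n$ with $n=\#$ positive roots recovered from $|R_i|$), hence $\mathfrak g\cong\bigoplus_i\mathfrak q_i\cong\bigoplus_i\mathfrak q'_i\cong\mathfrak g'$.

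The one subtlety to handle carefully is the degenerate low-rank and repeated-$A_4$ cases that the hypothesis rules out: the point of excluding $A_1,A_2,A_3,A_5,A_7,A_8$ and forbidding two $A_4$'s is that in those cases the weight-space position of the roots is \emph{not} determined by $\mathrm{Char}(\C^k)$ — there are genuinely inequivalent subalgebras with the same formal character, which is precisely the $\ell$-dependence phenomenon measured by $a_{n,\ell}$ in Theorem \ref{adic}. So in writing the proof I would make explicit that the bijection and the equality $R_i=R'_i$ are being asserted only under Hypothesis \ref{A}, and cite the relevant case analysis in \cite[$\mathsection2$]{Hu13} (in particular the orthogonality from Lemma \ref{Killing}(i),(iii) is what lets us treat each simple factor independently and thereby reduce to the single-factor rigidity statement). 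The main obstacle is purely expository: the real content lives in the $\mathsection2$ computations of \cite{Hu13}, and the work here is to isolate the precise single-factor statement ``$\mathrm{Char}(\C^k)$ plus the metric determines the $A_n$-root-subconfiguration, for $n$ in the allowed range'' and glue the per-factor conclusions together using orthogonality — no new estimates are needed beyond what is already in Lemma \ref{Killing}.
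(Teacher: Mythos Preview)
Your overall strategy matches the paper's --- both proofs rest on Lemma \ref{Killing} for the orthogonal decomposition and on the root computations of \cite[$\mathsection2$]{Hu13} --- but the logical order you propose is reversed in a way that leaves a gap. You plan to apply a symmetric ``rigidity'' statement to $\mathfrak g$ and to $\mathfrak g'$ separately, read off $R_i=R'_i$, and only then deduce $\mathfrak g\cong\mathfrak g'$. The difficulty is that the hypothesis is placed only on $\mathfrak g$: a priori $\mathfrak g'$ might have simple factors of other types (say $B_n$, or $A_1$, $A_2$), so the claim ``the $A_n$-root-subconfiguration of $\mathfrak g'$ is pinned down'' does not yet account for all of $R'$, nor does it give you a bijection between the factor subspaces of $\mathfrak g'$ and those of $\mathfrak g$. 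The results in \cite[$\mathsection2$]{Hu13} that you want to cite are not absolute statements about where $A_n$-roots must lie given $\mathrm{Char}(\C^k)$ alone; \cite[Proposition 2.11]{Hu13} is \emph{relative}, constraining the roots of $\mathfrak g'$ with respect to the weight-space decomposition coming from $\mathfrak g$.

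The paper handles this by inverting the order. It first quotes \cite[Theorems 2.14, 2.17]{Hu13} to obtain $\mathfrak g\cong\mathfrak g'$ directly from the common formal character and Hypothesis \ref{A} on $\mathfrak g$; this gives $\mathfrak g'$ the same list of simple factors, and in particular the same number of factors. Then it runs the asymmetric step: by \cite[Proposition 2.11]{Hu13}, under the assumptions on $\mathfrak g$, any root $u'_j\in R'_j$ with nonzero projection onto $\Lambda_i\otimes_\Z\R$ must actually lie in $\Lambda_i\otimes_\Z\R$ (the cases excluded by Hypothesis \ref{A} are exactly where this fails). Irreducibility of $R'_j$ and Lemma \ref{Killing}(iii) then force $\Lambda'_j\otimes_\Z\R\subset\Lambda_i\otimes_\Z\R$, and the equal factor count upgrades containment to equality, yielding the bijection and (i). Finally \cite[$\mathsection2.13$]{Hu13} --- which now applies in both directions because $\mathfrak g'$ is known to satisfy Hypothesis \ref{A} as well --- gives $R_i\subset R'_i$ and $R'_i\subset R_i$. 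Your argument becomes correct once you move the isomorphism $\mathfrak g\cong\mathfrak g'$ to the beginning rather than deriving it at the end.
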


\begin{proof}
Since $\mathfrak g\subset\End_k(\C)$ and $\mathfrak g'\subset\End_k(\C)$ have the same formal character 
$\mathfrak t\subset\End_k(\C)$ and the simple factors of $\mathfrak g$ satisfy the conditions in Hypothesis \ref{A},
$\mathfrak g$ and $\mathfrak g'$ are isomorphic \cite[Theorem 2.14, 2.17]{Hui13}.
Let $u_j'\in R'_j$ be a root of $\mathfrak q'_j$ such that the orthogonal projection of $u_j'$ (with respect to $((~,~))$) to $\Lambda_i\otimes_\Z\R$ is nonzero.
Since $\mathfrak q_i=A_n$ with $n\geq 4$ and $\mathfrak g$ is of type A (hence the assumptions of \cite[$\mathsection2.10$]{Hui13} are fulfilled), 
we have
$$u_j'\notin (\Lambda_i\otimes_\Z\R) \cup (\Lambda_i\otimes_\Z\R)^\bot$$
 only if $\mathfrak g$ has a $A_n$ factor where $n\in\{1,2,5,7\}$ or $\mathfrak g$ has two $A_4$ factors \cite[Proposition 2.11]{Hui13}.
Since these cases are excluded, we obtain
$$u_j'\in \Lambda_i\otimes_\Z\R.$$
Since the root system 
$$(\Lambda_j'\otimes_\Z\R,~R_j', ~(~,~)')$$
 of $\mathfrak q_j'$ \cite[$\mathsection21.1$]{FH91} is irreducible,
we obtain $R_j'\subset \Lambda_i\otimes_\Z\R$ by Lemma \ref{Killing}(iii).
Thus, we have $\Lambda_j'\otimes_\Z\R \subset \Lambda_i\otimes_\Z\R$.
Since the number of simple factors of $\mathfrak g$ and $\mathfrak g'$ are equal (because $\mathfrak g\cong\mathfrak g'$)
and $R$ (resp. $R'$)  generates vector space $\Lambda\otimes_\Z\R$,
we conclude (i)
$$\Lambda_j'\otimes_\Z\R = \Lambda_i\otimes_\Z\R$$
and thus obtain an one to one correspondence $\{\mathfrak q_i \leftrightarrow \mathfrak q_i'\}_i$ such that (ii) holds (because $\dim\mathfrak q_i=\dim\mathfrak q_i'$).
Since $\mathfrak g$ and $\mathfrak g'$ are isomorphic and satisfy the simple factor conditions in Hypothesis \ref{A}, we obtain 
$R_i\subset R_i'$ and $R_i'\subset R_i$ by 
$$\Lambda_i'\otimes_\Z\R = \Lambda_i\otimes_\Z\R$$
 and \cite[$\mathsection2.13$]{Hui13}.
We conclude that $R_i=R_i'$ for all $i$, which is (iii). Then (iv) follows from (iii).
\end{proof}

\subsection{The root datum and conjugacy class of $\bG_\ell$}\label{s3.2}

Let $F$ be a field with $\bar F$ an algebraic closure. To each pair $(\bG^{\mathrm{sp}},\bT^{\mathrm{sp}})$ where $\bG^{\mathrm{sp}}$ is a connected split reductive group defined over $F$ and $\bT^{\mathrm{sp}}$ is a split maximal torus of $\bG^{\mathrm{sp}}$, one associates a root datum $\Psi=\psi(\bG^{\mathrm{sp}},\bT^{\mathrm{sp}})=(\X,R,\X^\vee,R^\vee)$ as follows (\cite[Chapter 15]{Sp08}, \cite[$\mathsection2$ ($F=\bar F$)]{Sp79}). 
Denote by $\X$ the character group of $\bT^{\mathrm{sp}}$ and by $\X^\vee$ the cocharacter group of $\bT^{\mathrm{sp}}$. They are free abelian groups of 
 rank equal to the dimension of $\bT^{\mathrm{sp}}$ and admit a natural pairing $\left\langle~,~\right\rangle$: if $x\in\X$ and $u\in\X^\vee$, then $x(u(t))=t^{\left\langle x,u \right\rangle}$ for $t\in \bar F^*$.
Take $R$ to be the roots of $\bG^{\mathrm{sp}}$ (the non-zero characters of the adjoint representation of $\bG^{\mathrm{sp}}$)
with respect to $\bT^{\mathrm{sp}}$. For $\alpha\in R$, let $\bT_\alpha^{\mathrm{sp}}$ be the identity component of the kernel of $\alpha$ and $\bG_\alpha^{\mathrm{sp}}$ the derived group of the centralizer of $\bT_\alpha^{\mathrm{sp}}$ in $\bG^{\mathrm{sp}}$. Then $\bG_\alpha^{\mathrm{sp}}$ is semisimple of rank $1$ and there is a unique homomorphism $\alpha^\vee: F^*\to\bG_\alpha^{\mathrm{sp}}$ such that $\bT^{\mathrm{sp}}=(\mathrm{Im}\alpha^\vee)\bT_\alpha^{\mathrm{sp}}$ and $\left\langle\alpha,\alpha^\vee\right\rangle=2$. These $\alpha^\vee$ make up $R^\vee$. A \emph{central isogeny} \cite[$\mathsection 9.6.3$]{Sp08} $\phi$ of $(\bG^{\mathrm{sp}},\bT^{\mathrm{sp}})$ onto $((\bG^{\mathrm{sp}})',(\bT^{\mathrm{sp}})')$ induces an \emph{isogeny} of root data \cite[$\mathsection1$]{Sp79},
$$f(\phi):\psi((\bG^{\mathrm{sp}})',(\bT^{\mathrm{sp}})')\to\psi(\bG^{\mathrm{sp}},\bT^{\mathrm{sp}}).$$

\begin{thm}\cite[Theorem 16.3.3, 16.3.2]{Sp08}, \cite[Theorem 2.9 ($F=\bar F$)]{Sp79}\label{rdisom}
\begin{enumerate}[($i$)]
\item For any root datum $\Psi$ with reduced root system, there exists a connected split reductive group $\bG^{\mathrm{sp}}$ and a maximal split torus $\bT^{\mathrm{sp}}$ in $\bG^{\mathrm{sp}}$ such that $\Psi=\psi(\bG^{\mathrm{sp}},\bT^{\mathrm{sp}})$. The pair $(\bG^{\mathrm{sp}},\bT^{\mathrm{sp}})$ is unique up to isomorphism.
\item Let $\Psi=\psi(\bG^{\mathrm{sp}},\bT^{\mathrm{sp}})$ and $\Psi'=\psi((\bG^{\mathrm{sp}})',(\bT^{\mathrm{sp}})')$. If $f$ is an isogeny of $\Psi'$ into $\Psi$, then there exists a central isogeny $\phi$ of $(\bG^{\mathrm{sp}},\bT^{\mathrm{sp}})$ onto $((\bG^{\mathrm{sp}})',(\bT^{\mathrm{sp}})')$ with $f(\phi)=f$. Two such $\phi$ differ by an inner automorphism $\mathrm{Int}(t)$ of $\bG^{\mathrm{sp}}$, where 
$t\in\bT^{\mathrm{sp}}(F)$.
\end{enumerate}
\end{thm}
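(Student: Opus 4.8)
\emph{Proof proposal.} This is the Isomorphism and Existence Theorems for split reductive groups, and (as the cited reference \cite[Theorem 2.9]{Sp79} makes explicit) I shall read it over an algebraically closed $F$. I would prove (ii) first, because the uniqueness clause of (i) is exactly the case $f=\mathrm{id}$, $\Psi'=\Psi$ of (ii): then $\ker\phi$ has character group $\X/f(\X')=0$, so $\phi$ is an isomorphism. For (ii) I fix \emph{épinglages} of $(\bG^{\mathrm{sp}},\bT^{\mathrm{sp}})$ and of $((\bG^{\mathrm{sp}})',(\bT^{\mathrm{sp}})')$: Borel subgroups $\bB\supset\bT^{\mathrm{sp}}$ together with, for each simple root $\alpha$, a generator $X_\alpha$ of $\mathrm{Lie}(U_\alpha)$, where $U_\alpha$ is the corresponding root subgroup. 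The key input is Steinberg's structure theorem: $\bG^{\mathrm{sp}}$ is generated by $\bT^{\mathrm{sp}}$ together with one-parameter root subgroups $u_\alpha\colon\G_a\xrightarrow{\sim}U_\alpha$ ($\alpha\in R$), subject to the conjugation relations $t\,u_\alpha(x)\,t^{-1}=u_\alpha(\alpha(t)x)$, the Chevalley commutator relations with integral structure constants $c_{ij\alpha\beta}$, and the relations identifying the torus-valued words $h_\alpha(x)$ formed from $u_{\pm\alpha}$ with $\alpha^\vee(x)$; once the épinglage is fixed the $c_{ij\alpha\beta}$ depend only on the root datum $\Psi$.

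Given an isogeny $f\colon\Psi'\to\Psi$ there is a bijection $\alpha\mapsto\alpha'$ of $R$ onto $R'$ and, for each $\alpha$, a power $q(\alpha)$ of the characteristic exponent of $F$ with $f(\alpha')=q(\alpha)\,\alpha$ in $\X$ and, dually, $f^{\vee}(\alpha^\vee)=q(\alpha)\,(\alpha')^\vee$. I then define $\phi\colon\bG^{\mathrm{sp}}\to(\bG^{\mathrm{sp}})'$ on generators by $\phi\big(u_\alpha(x)\big)=u'_{\alpha'}\big(x^{q(\alpha)}\big)$ and by the torus isogeny $\bT^{\mathrm{sp}}\to(\bT^{\mathrm{sp}})'$ dual to $f\colon\X'\to\X$, and check that $\phi$ respects the three families of relations: the conjugation and $h_\alpha$-relations reduce to the compatibility of $f$ with the pairing, the roots and the coroots, while the commutator relations require comparing structure constants on the two sides, which is exactly where the rigidity supplied by the épinglages enters. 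The $\phi$ so produced is injective on each $U_\alpha$, so $\ker\phi\subset\bT^{\mathrm{sp}}$ and is central; it is surjective because its image contains $(\bT^{\mathrm{sp}})'$ and every $U'_{\alpha'}$; and $f(\phi)=f$ by construction. For the last assertion, two lifts $\phi,\phi'$ of $f$ have the same (central) kernel, hence factor through a single isomorphism $\bG^{\mathrm{sp}}/\ker\phi\xrightarrow{\sim}(\bG^{\mathrm{sp}})'$ in two ways, which differ by an automorphism $\theta$ of $(\bG^{\mathrm{sp}},\bT^{\mathrm{sp}})$ inducing the identity on $\Psi$; such a $\theta$ multiplies each $X_\alpha$ by a nonzero scalar $c_\alpha$, the relations force $\alpha\mapsto c_\alpha$ to extend to a homomorphism $Q\to F^{*}$ (i.e. to an element of $\bT^{\mathrm{ad}}(F)$, $Q\subset\X$ being the root lattice), this element lifts to some $t\in\bT^{\mathrm{sp}}(F)$ because $F^{*}$ is divisible, and then $\theta=\mathrm{Int}(t)$ with $t$ unique modulo $\bigcap_{\alpha\in R}\ker\alpha=Z(\bG^{\mathrm{sp}})$.

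For the existence part of (i) I build the group in stages. Let $Q\subset\X$ be the subgroup generated by $R$ and $Q^\vee\subset\X^\vee$ the one generated by $R^\vee$. First construct the simply connected semisimple group attached to the reduced root system $R$: take a complex semisimple Lie algebra with root system $R$ and a Chevalley basis, form Kostant's divided-power $\Z$-form of its universal enveloping algebra, pick a faithful representation whose highest weights generate the weight lattice $P$ of $R$ together with an admissible $\Z$-lattice, and let $\bG_{\mathrm{sc}}$ be the affine flat $\Z$-group generated by the one-parameter subgroups $\exp(t\,X_\alpha)$; base changing to $F$ gives a split semisimple group with root datum $(P,R,Q^\vee,R^\vee)$. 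Since $Q\subset\X\cap(Q\otimes\Q)\subset P$ and $\X/(\X\cap(Q\otimes\Q))$ is free, one realizes the prescribed datum $(\X,R,\X^\vee,R^\vee)$ as the quotient of $\bG_{\mathrm{sc}}\times\bT_1$, for a suitable split torus $\bT_1$, by a suitable finite central subgroup scheme, chosen so that the character lattice of the quotient is exactly $\X$; this is a bookkeeping exercise with the sequence $0\to Q\to\X$, the inclusion $Q^\vee\subset\X^\vee$, and their duals. As already observed, uniqueness up to isomorphism is the case $f=\mathrm{id}$ of (ii).

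The genuinely hard point is the existence statement in positive characteristic: one must show that Chevalley's $\Z$-form construction really does yield a smooth affine group scheme whose every geometric fibre is reductive with the prescribed root datum, which needs the full apparatus of Kostant's $\Z$-form, admissible lattices, and the verification of the commutator and Weyl-element relations over $\Z$; moreover, in characteristics $2$ and $3$ one has to accommodate the exceptional isogenies of root systems and the Frobenius powers $q(\alpha)$ appearing in the definition of an isogeny of root data, which complicates the relation-checking in (ii). All of this is carried out in \cite{Sp08} and \cite{Sp79} (ultimately going back to Chevalley and Steinberg), so for the present purposes I would limit myself to the reduction and the verifications sketched above.
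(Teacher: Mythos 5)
The paper does not prove Theorem~\ref{rdisom}: it is a verbatim citation of Springer's existence and isomorphism theorems for split reductive groups \cite[16.3.2--3]{Sp08}, \cite[Thm.~2.9]{Sp79}, and the paper treats it as a black box in $\mathsection\ref{s3.2}$ and $\mathsection\ref{s5}$. Your sketch is therefore not a ``different route'' to the paper's proof so much as a proof outline where the paper gives none, and it is a faithful summary of the Chevalley--Steinberg argument behind those citations: uniqueness in (i) is the case $f=\mathrm{id}$ of (ii); (ii) is proved by fixing \'epinglages and defining $\phi$ on Steinberg generators $u_\alpha(x)\mapsto u'_{\alpha'}(x^{q(\alpha)})$ together with the torus isogeny dual to $f$, then checking the Chevalley relations; and existence in (i) is obtained by first building the simply connected group over $\Z$ from a Kostant divided-power $\Z$-form, then realizing an arbitrary character lattice $\X$ as a quotient of $\bG^{\mathrm{sc}}\times\bT_1$ by a finite central subgroup scheme. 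Two small points. First, your lift of the representing element of $\bT^{\mathrm{ad}}(F)$ to $t\in\bT^{\mathrm{sp}}(F)$ uses divisibility of $F^{*}$ and so genuinely requires $F=\bar F$; this matches the annotation the paper places on the second citation and is the only case the paper ever invokes (Corollary~\ref{similar}, Lemma~\ref{prop1}, and Proposition~\ref{prop5} apply it over $\C$, $\bar\Q$, or $\bar\Q_\ell$). Second, you correctly locate the hard content---smoothness and fibrewise reductivity of the Chevalley $\Z$-scheme, and the relation-checking with Frobenius powers $q(\alpha)$ in characteristics $2$ and $3$---and defer it to the references; since the paper defers \emph{everything}, your proposal supplies useful structure and the right reductions, but is, inevitably, no more self-contained than the paper at the genuinely difficult steps.
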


\begin{remark}
If $F=\bar F$, then every connected reductive $\bG$ over $F$ splits. 
\end{remark}

Let $\ell$ and $\ell'$ be two distinct prime numbers. We identify $\bG_\ell\times_{\Q_\ell}\C$ and $\bG_{\ell'}\times_{\Q_{\ell'}}\C$ as 
connected reductive subgroups of $\GL_{k,\C}$.
By Theorem \ref{bicharC},
 the chain 
\begin{equation}\label{bi}
\bT_\C^{\ss}\subset\bT_\C\subset\GL_{k,\C}
\end{equation}
is isomorphic to the formal bi-characters of both $\bG_\ell\times_{\Q_\ell}\C$ and $\bG_{\ell'}\times_{\Q_{\ell'}}\C$. 
Hence, up to conjugation in $\GL_k(\C)$, we may assume
$\bT_\C$ is a common maximal torus of $\bG_\ell\times_{\Q_\ell}\C$ and $\bG_{\ell'}\times_{\Q_{\ell'}}\C$ and
$\bT_\C^{\ss}$ is a common maximal torus of $\bG_\ell^{\der}\times_{\Q_\ell}\C$ and $\bG_{\ell'}^{\der}\times_{\Q_{\ell'}}\C$ 
(the derived groups of $\bG_\ell\times_{\Q_\ell}\C$ and $\bG_{\ell'}\times_{\Q_{\ell'}}\C$) .

\begin{definition}
Define the following notations.
\begin{enumerate}[($a$)]
\item $\X$: the character group of $\bT_\C$
\item $\X^\vee$: the cocharacter group of $\bT_\C$
\item $R$: the roots of $\bG_\ell\times_{\Q_\ell}\C$ with respect to $\bT_\C$
\item $R^\vee$: the coroots of  $\bG_\ell\times_{\Q_\ell}\C$ with respect to $\bT_\C$
\item $R'$: the roots of $\bG_{\ell'}\times_{\Q_{\ell'}}\C$ with respect to $\bT_\C$
\item $(R')^\vee$: the coroots of  $\bG_{\ell'}\times_{\Q_{\ell'}}\C$ with respect to $\bT_\C$
\item $\X^{\ss}$: the character group of $\bT_\C^{\ss}$
\item $(\X^{\ss})^\vee$: the cocharacter group of $\bT_\C^{\ss}$
\item $R|_{\bT^{\ss}_\C}$: the roots of $\bG_\ell^{\der}\times_{\Q_\ell}\C$ with respect to $\bT_\C^{\ss}$
\item $R^\vee$: the coroots of  $\bG_\ell^{\der}\times_{\Q_\ell}\C$ with respect to $\bT_\C^{\ss}$
\item $R'|_{\bT_\C^{\ss}}$: the roots of $\bG_{\ell'}^{\der}\times_{\Q_{\ell'}}\C$ with respect to $\bT_\C^{\ss}$
\item $(R')^\vee$: the coroots of  $\bG_{\ell'}^{\der}\times_{\Q_{\ell'}}\C$ with respect to $\bT_\C^{\ss}$
\end{enumerate}
\end{definition}

\begin{remark}\label{same}
The definitions of (i),(j),(k),(l) make sense. Indeed, 
there are natural maps $\X\to\X^{\ss}$ and 
$(\X^{\ss})^\vee\subset \X^\vee$ because $\bT_\C^{\ss}$ is a subtorus of $\bT_\C$. 
Notations (i) and (k) come from the restriction of $R$ to $\bT_\C^{\ss}$. 
Notations (j) and (l) come from the fact that the 
coroots of $(\bG_\ell\times_{\Q_\ell}\C,\bT_\C)$ and $(\bG_\ell^{\der}\times_{\Q_\ell}\C,\bT_\C^{\ss})$
are identical.
\end{remark}

Let $\mathfrak{g}_\ell^{\der}$, $\mathfrak{g}_{\ell'}^{\der}$, and $\mathfrak{t}$ be the Lie algebras of 
$\bG_\ell^{\der}\times_{\Q_\ell}\C$, $\bG_{\ell'}^{\der}\times_{\Q_{\ell'}}\C$, and $\bT_\C^{\ss}$  respectively. 
Then $\mathfrak{t}$ is a common Cartan subalgebra of $\mathfrak{g}_\ell^{\der}$ and $\mathfrak{g}_{\ell'}^{\der}$.

\begin{prop}\label{rdss}
If  $R|_{\bT^{\ss}_\C}=R'|_{\bT^{\ss}_\C}$, then $R^\vee=(R')^\vee$. Therefore,
the root data $(\X^{\ss},R|_{\bT^{\ss}_\C},(\X^{\ss})^\vee,R^\vee)$ and $(\X^{\ss},R'|_{\bT^{\ss}_\C},(\X^{\ss})^\vee,(R')^\vee)$ of respectively
$(\bG_\ell^{\der}\times_{\Q_\ell}\C,\bT_\C^{\ss})$ and $(\bG_{\ell'}^{\der}\times_{\Q_{\ell'}}\C,\bT_\C^{\ss})$ are equal.
\end{prop}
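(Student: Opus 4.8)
The plan is to reduce the statement about root data of the derived groups with respect to $\bT_\C^{\ss}$ to the purely Lie-algebraic statement already proved in Proposition \ref{rootcomp}. First I would set $\mathfrak g := \mathrm{Lie}(\bG_\ell^{\der}\times_{\Q_\ell}\C)$ and $\mathfrak g' := \mathrm{Lie}(\bG_{\ell'}^{\der}\times_{\Q_\ell}\C)$, both viewed as semisimple subalgebras of $\End_k(\C)$, and observe that $\mathfrak t := \mathrm{Lie}(\bT_\C^{\ss})$ is a common Cartan subalgebra, since by Theorem \ref{bicharC} the torus $\bT_\C^{\ss}$ is simultaneously a maximal torus of $\bG_\ell^{\der}\times_{\Q_\ell}\C$ and of $\bG_{\ell'}^{\der}\times_{\Q_\ell}\C$. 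Moreover $\mathfrak g$ and $\mathfrak g'$ have the same formal character, namely the restriction of $\mathrm{Char}(\C^k)$ to $\bT_\C^{\ss}$, again by the $\ell$-independence of the formal bi-character in Theorem \ref{bicharC}. Hypothesis \ref{A} guarantees that the simple factors of $\mathfrak g$ are of type $A_n$ with $n\in\N\backslash\{1,2,3,5,7,8\}$ and that there is at most one $A_4$ factor, so the hypotheses of Proposition \ref{rootcomp} are met.

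Next I would invoke Proposition \ref{rootcomp} directly: it yields $\mathfrak g\cong\mathfrak g'$ and a bijection of simple factors under which $R_i = R_i'$ as subsets of $\Lambda\otimes_\Z\R \cong (\X^{\ss}\otimes_\Z\R)$. Taking the union over $i$ gives $R = R'$ inside $\X^{\ss}\otimes_\Z\R$; since $R$ is exactly the set $R|_{\bT^{\ss}_\C}$ of roots of $\bG_\ell^{\der}\times_{\Q_\ell}\C$ with respect to $\bT_\C^{\ss}$ and $R'$ is $R'|_{\bT^{\ss}_\C}$, this is the first claimed equality. The key point is that the roots live in the lattice $\X^{\ss}$ (characters of $\bT_\C^{\ss}$) and the identification of this lattice with the weight lattice $\Lambda$ of $\mathfrak g$ is canonical given the common torus, so the set-theoretic equality of roots in $\Lambda\otimes_\Z\R$ is literally an equality of subsets of $\X^{\ss}$.

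For the coroots, I would argue that once $R|_{\bT^{\ss}_\C} = R'|_{\bT^{\ss}_\C}$, each coroot is determined by its root. Concretely, for $\alpha\in R|_{\bT^{\ss}_\C}$ the coroot $\alpha^\vee\in(\X^{\ss})^\vee$ is the unique element pairing to $2$ with $\alpha$ and lying in the rank-one subgroup $\mathfrak{sl}_2$-line attached to $\alpha$; since $\alpha$ (hence the reflection $s_\alpha$ and the whole root-string data) is the same for $\mathfrak g$ and $\mathfrak g'$, and since $\alpha^\vee$ is recovered from $R|_{\bT^{\ss}_\C}$ as the unique vector with $\langle\alpha,\alpha^\vee\rangle=2$ and $s_\alpha(R|_{\bT^{\ss}_\C})=R|_{\bT^{\ss}_\C}$ acting by $x\mapsto x-\langle x,\alpha^\vee\rangle\alpha$, the coroot is intrinsic to the root system $R|_{\bT^{\ss}_\C}\subset\X^{\ss}\otimes_\Z\R$ together with the lattice $\X^{\ss}$. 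Hence $R^\vee = (R')^\vee$ in $(\X^{\ss})^\vee$. Combining, the two root data $(\X^{\ss},R|_{\bT^{\ss}_\C},(\X^{\ss})^\vee,R^\vee)$ and $(\X^{\ss},R'|_{\bT^{\ss}_\C},(\X^{\ss})^\vee,(R')^\vee)$ coincide. The main obstacle I anticipate is purely bookkeeping: making sure that the abstract root-system equality produced by Proposition \ref{rootcomp} (stated in $\Lambda\otimes_\Z\R$ with the inner product $((~,~))$) is transported to an honest equality of sublattices and root subsets of the character group $\X^{\ss}$ of the fixed torus $\bT_\C^{\ss}$, with the natural pairing $\langle~,~\rangle$ between $\X^{\ss}$ and $(\X^{\ss})^\vee$ — i.e., checking that all the identifications in Remark \ref{same} are compatible — after which the coroot uniqueness is a standard fact about root data.
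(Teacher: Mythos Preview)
Your reduction to Proposition \ref{rootcomp} for the equality $R|_{\bT^{\ss}_\C}=R'|_{\bT^{\ss}_\C}$ is exactly what the paper does: pass to the Lie algebras $\mathfrak g_\ell^{\der}$, $\mathfrak g_{\ell'}^{\der}$ with common Cartan $\mathfrak t=\mathrm{Lie}(\bT_\C^{\ss})$, note that the hypotheses of Proposition \ref{rootcomp} hold by Hypothesis \ref{A} and Theorem \ref{bicharC}, and conclude that the differentials of the two root sets agree, hence the root sets themselves agree in $\X^{\ss}$.

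Where you diverge from the paper is in the coroot argument. The paper computes explicitly: for $\alpha\in R|_{\bT^{\ss}_\C}=R'|_{\bT^{\ss}_\C}$ it realises $d\alpha^\vee$ and $d(\alpha')^\vee$ as distinguished elements of $\mathfrak t$ and uses the Killing-form formula $\langle\beta,\alpha^\vee\rangle=2(d\beta,d\alpha)/(d\alpha,d\alpha)$ together with Lemma \ref{Killing}(iii) (the two Killing forms differ only by positive scalars on each irreducible piece, and all roots have equal length in type $A$) to see that the pairings agree on every $\beta$, whence $\alpha^\vee=(\alpha')^\vee$. Your route is the abstract one: since $R|_{\bT^{\ss}_\C}$ spans $\X^{\ss}\otimes_\Z\R$ (we are on the derived group), the reflection $s_\alpha$ preserving $R|_{\bT^{\ss}_\C}$ is unique, and this forces the functional $\alpha^\vee$ to be unique as well, so $R^\vee=(R')^\vee$. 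This is correct and shorter; it replaces the explicit length/angle computation by the standard rigidity fact that in a spanning reduced root system the coroot is intrinsic to the root set. The paper's computation has the minor advantage of being self-contained (it does not appeal to the uniqueness lemma) and of making visible where the type-$A$ hypothesis enters a second time (equal root lengths), but your argument shows that once $R|_{\bT^{\ss}_\C}=R'|_{\bT^{\ss}_\C}$ is known, no further use of Hypothesis \ref{A} is needed for the coroots.
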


\begin{proof}
For any complex Lie group homomorphism $\phi$, denote by $d\phi$ the differential of $\phi$ at identity.
Let $\alpha\in R|_{\bT^{\ss}_\C}=R'|_{\bT^{\ss}_\C}$,
$\alpha^\vee\in R^\vee$ and $(\alpha')^\vee\in (R')^\vee$ be the coroots corresponding to $\alpha$.
Then 
$$(d\alpha:\mathfrak{t}\to \C)\in\mathfrak{t}^*$$
 is a root of $\mathfrak{g}_\ell^{\der}$ as well as a root of $\mathfrak{g}_{\ell'}^{\der}$.
If we identify
$$d\alpha^\vee:\C\to \mathfrak{t}\hspace{.2in} \mathrm{and}\hspace{.2in}d(\alpha')^\vee:\C\to \mathfrak{t}$$
 as elements of $\mathfrak{t}$ by the images of $1\in\C$,
then by construction they are 
\emph{distinguished elements} of $\mathfrak{t}$ \cite[$\mathsection14.1$]{FH91}
corresponding to the root $d\alpha$ of $\mathfrak{g}_\ell^{\der}$ and $\mathfrak{g}_{\ell'}^{\der}$ respectively.
Let $(~,~)$ and $(~,~)'$ on $\mathfrak{t}^*$ be the inner products induced by
 the Killing forms of $\mathfrak{g}_\ell^{\der}$ and $\mathfrak{g}_{\ell'}^{\der}$  respectively.
For $\beta\in  R|_{\bT^{\ss}_\C}=R'|_{\bT^{\ss}_\C}$,
we obtain by \cite[Corollary 14.29]{FH91} that
\begin{align}\label{angle}
\begin{split}
 \left\langle \beta,\alpha^\vee\right\rangle
&=d\beta(d\alpha^\vee)
=\frac{2(d\beta,d\alpha)}{(d\alpha,d\alpha)}
=\frac{2||d\beta||\cos\theta}{||d\alpha||},\\
\left\langle \beta,(\alpha')^\vee\right\rangle
&=d\beta(d(\alpha')^\vee)
=\frac{2(d\beta,d\alpha)'}{(d\alpha,d\alpha)'}
=\frac{2||d\beta||'\cos\theta'}{||d\alpha||'},
\end{split}
\end{align}
where $\theta$ and  $||\cdot||$ (resp. $\theta'$ and $||\cdot||'$) denote the angle between $d\alpha$ and $d\beta$ 
and the length under the inner product $(~,~)$  (resp. the inner product $(~,~)'$).
Let $V_\R$ be the $\R$-span in $\mathfrak{t}^*$ of the common set of roots 
$$\{d\beta:~\beta\in R|_{\bT^{\ss}_\C}=R'|_{\bT^{\ss}_\C}\}$$
of $(\mathfrak{g}_\ell^{\der}, \mathfrak{t})$ and $(\mathfrak{g}_{\ell'}^{\der}, \mathfrak{t})$. 
Then $(~,~)$ and $(~,~)'$ are positive definite 
on $V_\R$ and define two root systems.
In particular, the two Weyl group (of $\mathfrak g_\ell^{\der}$ and $\mathfrak g_{\ell'}^{\der}$) actions on $V_\R$ are orthogonal for both $(~,~)$ and $(~,~)'$. Thus,
$(~,~)|_{V_\R}$ determines $(~,~)'|_{V_\R}$
up to a positive scalar factor on each irreducible root subsystem by Lemma \ref{Killing}(iii).
Hence, $\theta=\theta'$ always holds and 
$$\frac{||d\beta||}{||d\alpha||}=\frac{||d\beta||'}{||d\alpha||'}$$
 if $d\alpha$ and $d\beta$ belong to the same irreducible subsystem.
We conclude that  $\left\langle \beta,\alpha^\vee\right\rangle=\left\langle \beta,(\alpha')^\vee\right\rangle$
by (\ref{angle})
for all $\beta\in R|_{\bT^{\ss}_\C}=R'|_{\bT^{\ss}_\C}$.
Since $R|_{\bT^{\ss}_\C}=R'|_{\bT^{\ss}_\C}$ spans $\X^{\ss}\otimes_\Z\R$, we have $\alpha^\vee=(\alpha')^\vee$ in $(\X^{\ss})^\vee$.
Hence, $R^\vee=(R')^\vee$.
\end{proof}

\begin{thm}\label{generalrd}
If  $R|_{\bT^{\ss}_\C}=R'|_{\bT^{\ss}_\C}$, then $R=R'$. Therefore,
the root data $\Psi=(\X,R,(\X)^\vee,R^\vee)$ and $\Psi'=(\X,R',(\X)^\vee,(R')^\vee)$ of respectively
$(\bG_\ell\times_{\Q_\ell}\C,\bT_\C)$ and $(\bG_{\ell'}\times_{\Q_{\ell'}}\C,\bT_\C)$ are equal.
\end{thm}

\begin{proof}
By Remark \ref{same} and Proposition \ref{rdss}, the coroots of $\Psi$ and $\Psi'$ are the same, i.e., $R^\vee=(R')^\vee$. 
It suffices to 
prove $R=R'$. Let $\X_\R=\X\otimes_\Z\R$. The formal character $\bT_\C\subset\GL_{k,\C}$ corresponds to a finite 
subset $S$ of $\X$ which spans $\X_\R$. The subgroup $G_S$ of $\GL(\X_\R)$ that preserves $S$ is finite and contains 
the Weyl groups $W$ and $W'$ of $(\bG_\ell\times_{\Q_\ell}\C,\bT_\C)$ and $(\bG_{\ell'}\times_{\Q_{\ell'}}\C,\bT_\C)$ respectively. 
By Weyl's unitarian trick,
 there exists a positive definite
inner product $(~,~)$ on $\X_\R$ such that $G_S$ is orthogonal. 
Denote by $V_\R$ and $V_\R'$ the $\R$-spans of 
$R$ and $R'$ in $\X_\R$. 
Denote by $U_\R$ the $\R$-span of the characters of $\X$ that annihilate $\bT_\C^{\ss}$. 
We obtain
\begin{equation}\label{complement}
V_\R\oplus U_\R=\X_\R=V_\R'\oplus U_\R.
\end{equation}
Let $V_\R^\bot$ (resp. $(V_\R')^\bot$) be the orthogonal complement of $V_\R$ (resp. $V_\R'$) in $\X_\R$. 
Since $V_\R$ and $V_\R^\bot$ (resp. $V_\R'$ and $(V_\R')^\bot$) are both invariant under $W$ (resp. $W'$), 
the action of $W$ (resp. $W'$) on $V_\R$ (resp. $V_\R'$) does not contain any trivial subrepresentation, 
and $W$ (resp. $W'$) is identity on $U_\R$,
we obtain $V_\R^\bot=U_\R=(V_\R')^\bot$ by (\ref{complement}), and hence, the following 
\begin{equation}\label{orth}
V_\R = U_\R^\bot = V_\R'.
\end{equation}
For any $\gamma\in R^\vee=(R')^\vee$, let $v_\gamma$ be the unique element in $\X_\R$ such that 
$$(\alpha,v_\gamma)=\left\langle\alpha,\gamma\right\rangle$$
for all $\alpha\in\X$.
Since the images of the coroots generate $\bT_\C^{\ss}$, we obtain
\begin{equation}\label{span}
\mathrm{Span}_\R\{v_\gamma:\gamma\in R^\vee=(R')^\vee\}=U_\R^\bot.
\end{equation}
The natural map $R\to R|_{\bT_\C^{\ss}}$ ($R'\to R'|_{\bT_\C^{\ss}}$) is a bijection and 
$R|_{\bT_\C^{\ss}}=R'|_{\bT_\C^{\ss}}$ is the hypothesis. Let $\alpha\in R$ and $\alpha'\in R'$ be two roots 
such that $\alpha|_{\bT_\C^{\ss}}=\alpha'|_{\bT_\C^{\ss}}$. Then
$$(\alpha,v_\gamma)=\left\langle\alpha,\gamma\right\rangle=\left\langle\alpha|_{\bT_\C^{\ss}},\gamma\right\rangle=\left\langle\alpha'|_{\bT_\C^{\ss}},\gamma\right\rangle    =\left\langle\alpha',\gamma\right\rangle=(\alpha',v_\gamma)$$ 
for all $\gamma\in R^\vee=(R')^\vee$.
Therefore, we obtain $\alpha=\alpha'$ by (\ref{orth}) and (\ref{span}), which implies $R=R'$.
\end{proof}

\begin{cor}\label{generalconj}
If  $R|_{\bT^{\ss}_\C}=R'|_{\bT^{\ss}_\C}$, then the complex reductive subgroups $\bG_\ell\times_{\Q_\ell}\C$ and $\bG_{\ell'}\times_{\Q_{\ell'}}\C$ of $\GL_{k,\C}$ 
are conjugate in $\GL_{k,\C}$.
\end{cor}

\begin{proof}
Since the root data $\Psi$ and $\Psi'$ are equal, this defines an isomorphism $f:\Psi'\to\Psi$ of root data.
By Theorem \ref{rdisom}(ii),
there exists an isomorphism $\phi:(\bG_\ell\times_{\Q_\ell}\C,\bT_\C)\to (\bG_{\ell'}\times_{\Q_{\ell'}}\C,\bT_\C)$ such that $f(\phi)=f$.
This implies the that standard representation $\bG_\ell\times_{\Q_\ell}\C\subset\GL_{k,\C}$ and the representation
$\bG_\ell\times_{\Q_\ell}\C\stackrel{\phi}{\rightarrow}\bG_{\ell'}\times_{\Q_{\ell'}}\C\subset\GL_{k,\C}$ of 
$\bG_\ell\times_{\Q_\ell}\C$
have the same character. Hence, the two representations are equivalent and the images are conjugate in $\GL_{k,\C}$.
\end{proof}

Since Proposition \ref{rootcomp}(iv) implies $R|_{\bT^{\ss}_\C}=R'|_{\bT^{\ss}_\C}$, we obtain the following 
immediately by Theorem \ref{generalrd} and Corollary \ref{generalconj}.

\begin{thm}\label{rdr}
If Hypothesis \ref{A} is satisfied, then the root datum of $(\bG_\ell\times_{\Q_\ell}\C, \bT_\C)$ and the conjugacy class of $\bG_\ell\times_{\Q_\ell}\C$ in $\GL_{k,\C}$ are independent of $\ell$.
\end{thm}

\begin{remark}
The formal bi-characters of $\bG\subset\GL_{k,\C}$ does not determine $\bG$ even if it is of type A:
Let $\bG=\SL_{2,\C}$ (semisimple) and $V$ the standard representation of $\bG$. Denote by $\mathrm{Sym}^iV$ the $i$th symmetric 
power of $V$ ($\mathrm{Sym}^0V$ denotes the trivial representation).
Let $\G_{m,\C}^3\subset\GL_{3,\C}$ the diagonal subgroup and consider the following $3$-dimensional representations of $\bG$:
\begin{align*}
\rho_1:&=\mathrm{Sym}^0(V)\oplus \mathrm{Sym}^1(V).\\
\rho_2:&=\mathrm{Sym}^2(V). 
\end{align*}
The images $\rho_1(\bG)\cong\SL_{2,\C}$ and $\rho_2(\bG)\cong\PSL_{2,\C}$ viewed as
subgroups of $\GL_{3,\C}$ have the same formal character (bi-character)
$$\{(1,z,z^{-1})\in\G_{m,\C}^3\subset\GL_{3,\C}: z\in\C^*\}$$
but they are not similar in $\GL_{3,\C}$ (not even isomorphic).
\end{remark}

\section{Forms of reductive groups}\label{s4}

Let $\bG^{\mathrm{sp}}$ be a connected split reductive group over the field $F$. 
Let $\bT^{\mathrm{sp}}$ be a maximal split $F$-subtorus of $\bG^{\mathrm{sp}}$, $W$ the Weyl group with respect to $\bT^{\mathrm{sp}}$, $\bN$ the normalizer of $\bT^{\mathrm{sp}}$ in $\bG^{\mathrm{sp}}$, and $\bB$ an $F$-Borel subgroup containing $\bT^{\mathrm{sp}}$. Let $\bC$ be the center of $\bG^{\mathrm{sp}}$.
The automorphism group $\Aut_{\bar F}\bG^{\mathrm{sp}}$
of $\bG^{\mathrm{sp}}\times_F\bar F$ is acted on by $\Gal_F$ in the following way.\\
If $\alpha\in\Aut_{\bar F}\bG^{\mathrm{sp}}$ and $\sigma\in\Gal_F$, then ${}^\sigma\! \alpha\in \Aut_{\bar F}\bG^{\mathrm{sp}}$ so that
\begin{equation}\label{gal}
{}^\sigma\! \alpha(x):=\sigma(\alpha(\sigma^{-1} x))\hspace{.1in}\forall x\in\bG^{\mathrm{sp}}(\bar F).
\end{equation}
The group $\Aut_{\bar F}\bG^{\mathrm{sp}}$ admits a short exact sequence of $\Gal_F$-groups \cite[Corollary 2.14]{Sp79} (see also \cite[XXIV Theorem 1.3]{DG62})
\begin{equation}\label{ses1}
1\to \mathrm{Inn}_{\bar F}\bG^{\mathrm{sp}}\to \Aut_{\bar F}\bG^{\mathrm{sp}}\to \mathrm{Out}_{\bar F}\bG^{\mathrm{sp}}\to 1,
\end{equation}
where $\mathrm{Inn}_{\bar F}\bG^{\mathrm{sp}}$, the inner automorphism group is naturally 
isomorphic to the group of $\bar F$-points of $\bG^{\ad}:=\bG^{\mathrm{sp}}/\bC$ the adjoint quotient of $\bG^{\mathrm{sp}}$ 
and $\mathrm{Out}_{\bar F}\bG^{\mathrm{sp}}$, the outer automorphism group
is acted on trivially by $\Gal_F$ because $\bG^{\mathrm{sp}}$ is split.

\begin{prop} \label{4.1}
The group $\Aut_{\bar F}\bG^{\mathrm{sp}}$ contains a 
$\Gal_F$-invariant subgroup that preserves $\bT^{\mathrm{sp}}$ and $\bB$ and is mapped isomorphically onto $\mathrm{Out}_{\bar F}\bG^{\mathrm{sp}}$.
Hence, (\ref{ses1}) is a split short exact sequence of $\Gal_F$-groups.
\end{prop}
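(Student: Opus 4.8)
The plan is to realize the splitting by means of an $F$-rational pinning (épinglage) and to deduce $\Gal_F$-equivariance from the fact that $\bG^{\mathrm{sp}}$ is split. Let $\Delta$ be the base of the root system of $(\bG^{\mathrm{sp}},\bT^{\mathrm{sp}})$ determined by $\bB$. Since $\bT^{\mathrm{sp}}$ is split over $F$, each root space $\mathfrak{g}^{\mathrm{sp}}_\alpha\subset\mathfrak{g}^{\mathrm{sp}}:=\mathrm{Lie}(\bG^{\mathrm{sp}})$ ($\alpha\in\Delta$) is a line defined over $F$; I would fix $0\neq X_\alpha\in\mathfrak{g}^{\mathrm{sp}}_\alpha(F)$ and let $\Aut^{\mathrm{pin}}\subset\Aut_{\bar F}\bG^{\mathrm{sp}}$ be the subgroup of those $\phi$ with $\phi(\bT^{\mathrm{sp}})=\bT^{\mathrm{sp}}$, $\phi(\bB)=\bB$, and $\mathrm{Lie}(\phi)$ permuting the set $\{X_\alpha:\alpha\in\Delta\}$ (a subgroup, since permutations compose). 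It preserves $\bT^{\mathrm{sp}}$ and $\bB$ by construction, and the whole proof reduces to showing that the composite
$$\theta\colon\ \Aut^{\mathrm{pin}}\ \hookrightarrow\ \Aut_{\bar F}\bG^{\mathrm{sp}}\ \twoheadrightarrow\ \mathrm{Out}_{\bar F}\bG^{\mathrm{sp}}$$
is a group isomorphism and that $\Aut^{\mathrm{pin}}$ is $\Gal_F$-invariant; then $\theta^{-1}$ is the required section of (\ref{ses1}).

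For injectivity of $\theta$ I would argue: if $\phi=\mathrm{Int}(g)\in\Aut^{\mathrm{pin}}$, then $\phi(\bB)=\bB$ forces $g\in\bB(\bar F)$ (a Borel is self-normalizing), and then $\phi(\bT^{\mathrm{sp}})=\bT^{\mathrm{sp}}$ forces $g\in\bT^{\mathrm{sp}}(\bar F)$ (the identity is the only element of the unipotent radical of $\bB$ normalizing $\bT^{\mathrm{sp}}$); since $\mathrm{Lie}(\phi)$ scales each $X_\alpha$ by $\alpha(g)$ we get $\alpha(g)=1$ for all $\alpha\in\Delta$, hence $g\in\bC(\bar F)$ and $\phi=\mathrm{id}$. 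For surjectivity: an element of $\mathrm{Out}_{\bar F}\bG^{\mathrm{sp}}$ is an automorphism of the based root datum of $(\bG^{\mathrm{sp}},\bT^{\mathrm{sp}},\bB)$, i.e.\ a diagram automorphism $\pi$ of $\Delta$; working inside the full automorphism group $W\rtimes\mathrm{Out}_{\bar F}\bG^{\mathrm{sp}}$ of the root datum I would lift $\pi$ to an automorphism $\tau$ of $\psi(\bG^{\mathrm{sp}},\bT^{\mathrm{sp}})$ stabilizing $\Delta$, apply Theorem \ref{rdisom}(ii) (with $\Psi=\Psi'$, $f=\tau$) to obtain $\phi$ with $\phi(\bT^{\mathrm{sp}})=\bT^{\mathrm{sp}}$ inducing $\tau$ (whence also $\phi(\bB)=\bB$), and finally replace $\phi$ by $\mathrm{Int}(t)^{-1}\circ\phi$ for a suitable $t\in\bT^{\mathrm{sp}}(\bar F)$ so that $\mathrm{Lie}$ sends each $X_\alpha$ exactly to $X_{\pi(\alpha)}$; such a $t$ exists because the simple roots are linearly independent, so $t\mapsto(\alpha(t))_{\alpha\in\Delta}$ is a surjective morphism of tori onto $\G_m^{\Delta}$. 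The resulting automorphism lies in $\Aut^{\mathrm{pin}}$ and still induces the prescribed outer class, so $\theta$ is onto.

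For $\Gal_F$-invariance I would observe that, because $\bT^{\mathrm{sp}}$, $\bB$ and the $X_\alpha$ are $F$-rational, for $\sigma\in\Gal_F$ and $\phi\in\Aut^{\mathrm{pin}}$ inducing the permutation $\pi$ of $\Delta$, the twist ${}^{\sigma}\phi$ of (\ref{gal}) again fixes $\bT^{\mathrm{sp}}$ and $\bB$ and, on Lie algebras, sends $X_\alpha=\sigma(X_\alpha)$ to $\sigma(X_{\pi(\alpha)})=X_{\pi(\alpha)}$; hence ${}^\sigma\phi\in\Aut^{\mathrm{pin}}$ induces the same $\pi$ as $\phi$, and by the injectivity of $\theta$ it coincides with $\phi$. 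Thus $\Aut^{\mathrm{pin}}$ is a $\Gal_F$-invariant (indeed pointwise $\Gal_F$-fixed) subgroup of $\Aut_{\bar F}\bG^{\mathrm{sp}}$ preserving $\bT^{\mathrm{sp}}$ and $\bB$ and mapped isomorphically onto $\mathrm{Out}_{\bar F}\bG^{\mathrm{sp}}$; since $\Gal_F$ acts trivially on $\mathrm{Out}_{\bar F}\bG^{\mathrm{sp}}$, the section $\theta^{-1}$ is a morphism of $\Gal_F$-groups, which is exactly the assertion that (\ref{ses1}) splits in the category of $\Gal_F$-groups.

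The hard part is the surjectivity of $\theta$: upgrading a purely combinatorial automorphism of the based root datum to an automorphism of $\bG^{\mathrm{sp}}$ normalizing the pinning. This is precisely where the existence-and-uniqueness statement of Theorem \ref{rdisom}(ii) does the work — uniqueness up to $\mathrm{Int}(t)$ is what permits normalizing the action on the chosen root vectors — and it is also the reason the pinning $\{X_\alpha\}$ must be chosen over $F$ from the outset. Everything else (injectivity, the subgroup bookkeeping, and the Galois-twist computation) is routine.
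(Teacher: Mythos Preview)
Your proof is correct and takes essentially the same approach as the paper: both define the splitting subgroup as the stabilizer of an $F$-rational pinning, the paper using nontrivial $F$-points $u_\alpha\in\mathbf{U}_\alpha(F)$ of the root subgroups while you use nonzero $F$-rational root vectors $X_\alpha\in\mathfrak{g}^{\mathrm{sp}}_\alpha(F)$. The paper then cites \cite[Proposition~2.13, Corollary~2.14]{Sp79} for the isomorphism onto $\mathrm{Out}_{\bar F}\bG^{\mathrm{sp}}$ where you instead unpack it via Theorem~\ref{rdisom}(ii), and the $\Gal_F$-invariance argument is the same.
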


\begin{proof} Let $\Delta$ be the set of simple roots with respect to $(\bT^{\mathrm{sp}},\bB)$. 
Let $\mathbf{U}_\alpha$ be the root subgroup for $\alpha\in\Delta$ (the construction of Chevalley). 
It is isomorphic 
to the $F$-affine line. 
Choose $u_\alpha\in \mathbf{U}_\alpha(F)\backslash\{0\}$ for all $\alpha\in \Delta$.
Then the subgroup of $\Aut_{\bar F}\bG^{\mathrm{sp}}$ that leave $\bT^{\mathrm{sp}}$, $\bB$, and $\{u_\alpha\}_{\alpha\in\Delta}$ 
invariant is mapped isomorphically onto $\mathrm{Out}_{\bar F}\bG^{\mathrm{sp}}$ by \cite[Proposition 2.13, Corollary 2.14]{Sp79}.
This subgroup is
$\Gal_F$-invariant 
since $\bT^{\mathrm{sp}}$, $\bB$, and $\{u_\alpha\}_{\alpha\in\Delta}$ are $\Gal_F$-invariant.
\end{proof}

We then obtain a split short exact sequence of pointed sets by non-abelian cohomology \cite[$\mathsection5$]{Se97} 
\begin{equation}\label{ses2}
1\to (H^1(F,\mathrm{Inn}_{\bar F}\bG^{\mathrm{sp}}),0')\stackrel{i}{\rightarrow}  (H^1(F,\Aut_{\bar F}\bG^{\mathrm{sp}}),0)\stackrel{\pi}{\rightarrow} 
(H^1(F,\mathrm{Out}_{\bar F}\bG^{\mathrm{sp}}),0'')\to 1,
\end{equation}
where $0',0,0''$ denote the \emph{neutral elements} \cite[$\mathsection5.1$]{Se97}.
This means that $i(0')=0$, $\pi(0)=0''$, $i$ is injective, $\pi$ is surjective, $\pi^{-1}(0'')=\mathrm{Im}(i)$ \cite[$\mathsection5.4,5.5$]{Se97}, and there is a pointed map 
$j: (H^1(F,\mathrm{Out}_{\bar F}\bG^{\mathrm{sp}}),0'') \to (H^1(F,\Aut_{\bar F}\bG^{\mathrm{sp}}),0)$ such that $\pi\circ j=\mathrm{Id}$. 

The elements of $H^1(F,\Aut_{\bar F}\bG^{\mathrm{sp}})$ are in bijective correspondence with the $F$-forms of $\bG^{\mathrm{sp}}$ \cite[Chapter 3.1]{Se97}.
If $\bG$ is an $F$-form of $\bG^{\mathrm{sp}}$, then there exists an $\bar F$-isomorphism $\phi:\bG^{\mathrm{sp}}\times_F\bar F\to \bG\times_F\bar F$. The isomorphism class of $\bG/F$ is represented by $[c_\sigma]\in H^1(F,\Aut_{\bar F}\bG^{\mathrm{sp}})$, where 
\begin{equation}\label{action1}
c_\sigma(x):=\phi^{-1}(\sigma\phi(\sigma^{-1}(x))) \hspace{.1in}\forall x\in\bG^{\mathrm{sp}}(\bar F).
\end{equation}
 Two forms 
$\bG'$ and $\bG''$ that map to the same image in $H^1(F,\mathrm{Out}_{\bar F}\bG^{\mathrm{sp}})$ are \emph{inner twists} of each other \cite[Chapter I $\mathsection5.5$ Corollary 2]{Se97}, i.e., $[\bG'']\in H^1(F,\mathrm{Inn}_{\bar F}\bG')$. The following result is well-known (see for example \cite[Chapter X $\mathsection2$]{CF65}, \cite[XXIV Theorem 3.11]{DG62}). We supply a proof that we learnt from \cite[Proposition 29.4]{Gi08}.

\begin{thm}\label{4.2}
The set $H^1(F,\mathrm{Out}_{\bar F}\bG^{\mathrm{sp}})$ in (\ref{ses2}) is in one-to-one correspondence with the set of quasi-split $F$-forms of $\bG^{\mathrm{sp}}$.
\end{thm}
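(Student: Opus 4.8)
The plan is to show that each fiber of $\pi$ in (\ref{ses2}) contains exactly one quasi-split form, and that this sets up the desired bijection with the base $H^1(F,\mathrm{Out}_{\bar F}\bG^{\mathrm{sp}})$. Since $\pi$ is surjective with a section (Proposition \ref{4.1}), the fibers are indexed by $H^1(F,\mathrm{Out}_{\bar F}\bG^{\mathrm{sp}})$, so it suffices to establish the bijection \emph{fiber-wise}. First I would recall that the forms lying in a fixed fiber of $\pi$ are precisely the inner twists of one another; fixing a representative $\bG_0$ of that fiber, the fiber is the image of the (possibly non-injective) map $H^1(F,\mathrm{Inn}_{\bar F}\bG_0)\to H^1(F,\Aut_{\bar F}\bG^{\mathrm{sp}})$, where $\mathrm{Inn}_{\bar F}\bG_0\cong\bG_0^{\ad}(\bar F)$.

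\textbf{Existence of a quasi-split representative in each fiber.} I would use the splitting of (\ref{ses1}) provided by Proposition \ref{4.1}: a class $\xi\in H^1(F,\mathrm{Out}_{\bar F}\bG^{\mathrm{sp}})$ can be lifted, via the $\Gal_F$-equivariant section, to a cocycle $c_\sigma$ taking values in the subgroup of $\Aut_{\bar F}\bG^{\mathrm{sp}}$ stabilizing the pinning $(\bT^{\mathrm{sp}},\bB,\{u_\alpha\})$. Twisting $\bG^{\mathrm{sp}}$ by this cocycle produces an $F$-form $\bG_\xi$ in which $\bB$ and $\bT^{\mathrm{sp}}$ descend to $F$-subgroups — indeed the cocycle fixes them pointwise in the appropriate sense — so $\bG_\xi$ has an $F$-Borel subgroup and is thus quasi-split. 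This exhibits at least one quasi-split form in the fiber over $\xi$.

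\textbf{Uniqueness.} This is the step I expect to be the main obstacle. The claim is that two quasi-split forms $\bG', \bG''$ that are inner twists of each other are actually $F$-isomorphic. The standard argument: choose $F$-Borel subgroups $\bB'\subset\bG'$, $\bB''\subset\bG''$ with $F$-maximal tori $\bT'\subset\bB'$, $\bT''\subset\bB''$ (these exist by quasi-splitness, using that a Borel contains a maximal torus defined over the base field). An inner twist is classified by a class in $H^1(F,\bG'^{\ad})$; the obstruction to it being trivial — i.e., to $\bG''\cong\bG'$ over $F$ — lives in the quotient by the image of the relevant adjoint-group cohomology. One reduces, via conjugacy of Borel–torus pairs over $\bar F$ and Galois descent, to a cocycle valued in the normalizer data that preserves a common pinning; since the outer classes of $\bG'$ and $\bG''$ agree, one can arrange the twisting cocycle to be \emph{inner} and pinning-preserving, but an inner automorphism preserving a pinning is trivial, forcing the cocycle to be a coboundary. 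Concretely I would follow the argument of \cite[Proposition 29.4]{Gi08}: transport structure so that both forms are obtained by twisting the same pinned split group by cocycles with the same image in $\mathrm{Out}$, then use that any two lifts of that $\mathrm{Out}$-class through the split section differ by an inner, pinning-preserving — hence trivial — cocycle, up to coboundary.

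\textbf{Assembling the bijection.} Combining the two parts: the map sending a quasi-split $F$-form to its fiber under $\pi$ (equivalently, to its outer class in $H^1(F,\mathrm{Out}_{\bar F}\bG^{\mathrm{sp}})$) is well-defined, surjective by the existence step, and injective by the uniqueness step. Hence the fibers of $\pi$ are in one-to-one correspondence with the quasi-split $F$-forms of $\bG^{\mathrm{sp}}$, which is the assertion of Theorem \ref{4.2}. I would remark that the fiber over the distinguished (trivial) outer class contains $\bG^{\mathrm{sp}}$ itself, consistent with $\bG^{\mathrm{sp}}$ being quasi-split.
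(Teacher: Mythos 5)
The existence step (every fiber of $\pi$ contains a quasi-split form, via the pinned splitting of Proposition \ref{4.1}) matches the paper exactly. The gap is in the uniqueness step, which you yourself flag as the main obstacle but do not resolve.

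Your sketch claims that, after transporting structure so both quasi-split forms arise from cocycles with the same outer image, ``one can arrange the twisting cocycle to be inner and pinning-preserving, but an inner automorphism preserving a pinning is trivial.'' This is circular: an inner, pinning-preserving automorphism is the identity, so ``arranging the cocycle to be inner and pinning-preserving'' is exactly the conclusion that it is a coboundary, not a step you can take for free. Conjugation using the $F$-Borel pairs of $\bG'$ and $\bG''$ lets you move the inner cocycle into the normalizer of $\bT'$, and then — using that the cocycle preserves the Borel $\bB'$ — further into the torus $\bT'/\bC'$ of the adjoint group (this is the content of \cite[Proposition 2.5(ii)]{Sp79}), but there is no reason it should preserve the $\{u_\alpha\}$. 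What the paper then does, and what your proposal is missing, is the actual vanishing argument: because $\Gal_F$ permutes the simple roots $\Delta'$, which form a basis of $X^*(\bT'/\bC')$, the torus $\bT'/\bC'$ is a product of induced tori $\mathrm{Ind}^F_{F_i}\G_{m,F_i}$, and then Shapiro's lemma plus Hilbert's Theorem 90 give $H^1(F,\bT'/\bC')=0$. Without this step the inner cocycle has nowhere to die, and the injectivity of the map from quasi-split forms to $H^1(F,\mathrm{Out}_{\bar F}\bG^{\mathrm{sp}})$ is not established.
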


\begin{proof} Let $[c_\sigma]$ be an element of $H^1(F,\mathrm{Out}_{\bar F}\bG^{\mathrm{sp}})$. Then we obtain by Proposition \ref{4.1}
an element $[c'_\sigma:=j(c_\sigma)]\in \pi^{-1}([c_\sigma])$ such that $c'_\sigma\in\Aut_{\bar F}\bG^{\mathrm{sp}}$ preserves $\bT^{\mathrm{sp}}$ and $\bB$ and 
is invariant under $\Gal_F$ for all $\sigma\in\Gal_F$. The $F$-form $\bG'$ corresponding to $[c'_\sigma]$ is obtained by 
defining an $F$-structure on $\bG^{\mathrm{sp}}(\bar F)$ by the twisted Galois action:
\begin{equation*}
\sigma\cdot x:=c'_\sigma(\sigma x)\hspace{.1in} \forall \sigma\in\Gal_F,~x\in \bG^{\mathrm{sp}}(\bar F).
\end{equation*}  
Since $\bB(\bar F)$ is invariant under $\sigma$ and $c'_\sigma$ for all $\sigma\in\Gal_F$, $\bG'$ has a Borel subgroup defined over $F$.
Hence, the quasi-split $F$-forms of $\bG^{\mathrm{sp}}$ surject onto $H^1(F,\mathrm{Out}_{\bar F}\bG^{\mathrm{sp}})$.

Let $\bG'$ and $\bG''$ be two quasi-split $F$-forms of $\bG^{\mathrm{sp}}$ that map to the same image via $\pi$. 
They differ by an inner twist $[c_\sigma]\in H^1(F,\mathrm{Inn}_{\bar F}\bG')$.
Let $\bT'\subset\bB'$ (resp. $\bT''\subset\bB''$) be an embedding of an $F$-maximal torus of $\bG'$ (resp. $\bG''$)
 in an $F$-Borel subgroup of $\bG'$ (resp. $\bG''$).
Let $\mathbf{C}'$ be the center of $\bG'$ and $\Delta'$ the simple roots of $\bG'$ with respect to $(\bT',\bB')$. We may assume $c_\sigma\in \bT'/\mathbf{C}'$ for all $\sigma\in\Gal_F$ \cite[Proposition 2.5(ii)]{Sp79}.
Since $\Gal_F$ permutes $\Delta'$ which is a basis of characters of $\bT'/\mathbf{C}'$, torus $\bT'/\mathbf{C}'$
is a direct sum of induced tori, i.e., there exist finite separable extensions $F_1,...,F_k$ of $F$ such that
\begin{equation*}
\bT'/\mathbf{C}'=\bigoplus^k_{i=1}\mathrm{Ind}^F_{F_i}\G_{m,F_i}.
\end{equation*}
By Shapiro's lemma and Hilbert's Theorem $90$, we obtain $H^1(F, \bT'/\mathbf{C}')=0$. Therefore, $\bG'$ and $\bG''$ are $F$-isomorphic
and we conclude that the quasi-split $F$-forms of $\bG^{\mathrm{sp}}$ map bijectively onto $H^1(F,\mathrm{Out}_{\bar F}\bG^{\mathrm{sp}})$.
\end{proof}

Let
$\Aut_{\bar F,\bT^{\mathrm{sp}}}\bG^{\mathrm{sp}}$ be the subgroup of $\Aut_{\bar F}\bG^{\mathrm{sp}}$ 
that preserves $\bT^{\mathrm{sp}}$. Denote by $\Aut_{\bar F}\bT^{\mathrm{sp}}$ the automorphism group of $\bT^{\mathrm{sp}}\times_F\bar F$.
Although the following proposition is contained in \cite[XXIV Proposition 2.6]{DG62}, we still provide a proof.

\begin{prop}\label{4.3}
With the notations introduced above, the following commutative diagram of $\Gal_F$-groups has exact rows and columns.
The maps from the top row to the middle row are given by 
inner automorphisms by elements of $\bT^{\mathrm{sp}}(\bar F)$ and the first two maps from
the middle row to the bottom row are given by the restriction to $\bT^{\mathrm{sp}}$, i.e., $\Omega_{\bar F}:=\Aut_{\bar F,\bT^{\mathrm{sp}}}\bG^{\mathrm{sp}}/\bT^{\mathrm{sp}}(\bar F)$ can be identified as a subgroup of $\Aut_{\bar F}\bT^{\mathrm{sp}}$.
\begin{equation}\label{ses3}
\begin{aligned}
\xymatrix{
1\ar[r] &\bT^{\mathrm{sp}}(\bar F) \ar[d] \ar[r] &\bT^{\mathrm{sp}}(\bar F) \ar[d]\ar[r] &1\ar[d]\ar[r] & 1\\
1\ar[r] &\bN/\bC(\bar F) \ar[d] \ar[r] &\Aut_{\bar F,\bT^{\mathrm{sp}}}\bG^{\mathrm{sp}} \ar[d]^{\mathrm{Res}} \ar[r]& \mathrm{Out}_{\bar F}\bG^{\mathrm{sp}}\ar[d]^{\mathrm{Id}}\ar[r]& 1\\
1\ar[r] & W\ar[r] & \Omega_{\bar F}:=\Aut_{\bar F,\bT^{\mathrm{sp}}}\bG^{\mathrm{sp}}/\bT^{\mathrm{sp}}(\bar F)\ar[r] & \mathrm{Out}_{\bar F}\bG^{\mathrm{sp}} \ar[r] &1}
\end{aligned}
\end{equation}
\end{prop}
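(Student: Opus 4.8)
The plan is to verify exactness of each row and column separately and then check that all maps are $\Gal_F$-equivariant, after which commutativity is immediate from the definitions. First I would recall the definitions: the top row is the trivial two-term complex $\bT^{\mathrm{sp}}(\bar F)\xrightarrow{\mathrm{id}}\bT^{\mathrm{sp}}(\bar F)$, extended by the trivial group, which is obviously exact. The middle row is a segment of the standard structure theory for automorphisms preserving a torus: since any $\alpha\in\Aut_{\bar F,\bT^{\mathrm{sp}}}\bG^{\mathrm{sp}}$ that acts trivially on the based root datum of $(\bG^{\mathrm{sp}},\bT^{\mathrm{sp}},\bB)$ is inner, and an inner automorphism preserving $\bT^{\mathrm{sp}}$ is conjugation by an element of $\bN(\bar F)$, the kernel of the surjection $\Aut_{\bar F,\bT^{\mathrm{sp}}}\bG^{\mathrm{sp}}\to \mathrm{Out}_{\bar F}\bG^{\mathrm{sp}}$ is exactly the image of $\bN/\bC(\bar F)$ (here $\bN/\bC$ acts faithfully because $\bC$ is the kernel of the adjoint action). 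The surjectivity onto $\mathrm{Out}_{\bar F}\bG^{\mathrm{sp}}$ is Proposition \ref{4.1}. For the bottom row, I would note that the left vertical arrows are the quotient by $\bT^{\mathrm{sp}}(\bar F)$ (embedded diagonally into the top of each column via $t\mapsto \mathrm{Int}(t)$, landing inside $\bN/\bC(\bar F)$ in the middle), and the snake/quotient argument then yields $\bN/\bC(\bar F)\big/\bT^{\mathrm{sp}}(\bar F)\cong \bN/\bT^{\mathrm{sp}}(\bar F)=W$ on the left and $\mathrm{Out}_{\bar F}\bG^{\mathrm{sp}}$ unchanged on the right, so the bottom row $1\to W\to \Omega_{\bar F}\to \mathrm{Out}_{\bar F}\bG^{\mathrm{sp}}\to 1$ is exact, where $\Omega_{\bar F}:=\Aut_{\bar F,\bT^{\mathrm{sp}}}\bG^{\mathrm{sp}}/\bT^{\mathrm{sp}}(\bar F)$.

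Next I would justify that $\Omega_{\bar F}$ embeds into $\Aut_{\bar F}\bT^{\mathrm{sp}}$ via restriction. The restriction map $\mathrm{Res}\colon \Aut_{\bar F,\bT^{\mathrm{sp}}}\bG^{\mathrm{sp}}\to \Aut_{\bar F}\bT^{\mathrm{sp}}$ has kernel the automorphisms fixing $\bT^{\mathrm{sp}}$ pointwise; such an automorphism is inner (being trivial on the root datum up to the part detected by the torus, one reduces to the inner case) and conjugation by $g$ fixes $\bT^{\mathrm{sp}}$ pointwise iff $g$ centralizes $\bT^{\mathrm{sp}}$, i.e. $g\in\bT^{\mathrm{sp}}(\bar F)$ (as $\bT^{\mathrm{sp}}$ is its own centralizer). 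Hence $\ker(\mathrm{Res})=\bT^{\mathrm{sp}}(\bar F)$, which is exactly what one quotients by to form $\Omega_{\bar F}$, so $\mathrm{Res}$ descends to an injection $\Omega_{\bar F}\hookrightarrow \Aut_{\bar F}\bT^{\mathrm{sp}}$. Commutativity of the diagram is then a formal consequence: the left-hand square commutes because both composites send $t\in\bT^{\mathrm{sp}}(\bar F)$ to the class of $\mathrm{Int}(t)$, and the right-hand squares commute because the projections to $\mathrm{Out}_{\bar F}\bG^{\mathrm{sp}}$ are compatible with the quotient by $\bT^{\mathrm{sp}}(\bar F)$ and with $\mathrm{Res}$ by construction.

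Finally I would check $\Gal_F$-equivariance of every arrow. The action of $\Gal_F$ on $\Aut_{\bar F}\bG^{\mathrm{sp}}$ is the twisted conjugation (\ref{gal}), and since $\bT^{\mathrm{sp}}$, $\bB$, and the pinning $\{u_\alpha\}$ are defined over $F$, the subgroups $\bT^{\mathrm{sp}}(\bar F)$, $\bN(\bar F)$, $\bC(\bar F)$, and $\Aut_{\bar F,\bT^{\mathrm{sp}}}\bG^{\mathrm{sp}}$ are $\Gal_F$-stable, and the maps between them (inclusion, inner-automorphism embedding, restriction, projection to $\mathrm{Out}$) intertwine the respective Galois actions; the action on $\mathrm{Out}_{\bar F}\bG^{\mathrm{sp}}$ is trivial because $\bG^{\mathrm{sp}}$ is split, and the induced action on $W$ (via the bottom row) is likewise the permutation action on roots, which is the standard one. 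The main obstacle is the bookkeeping in the kernel computations — specifically showing that an automorphism preserving $\bT^{\mathrm{sp}}$ and trivial on the outer automorphism group is inner by an element of $\bN(\bar F)$, and that one fixing $\bT^{\mathrm{sp}}$ pointwise is $\mathrm{Int}(t)$ for $t\in\bT^{\mathrm{sp}}(\bar F)$; both follow from the rigidity of split reductive groups (\cite[Proposition 2.13, Corollary 2.14]{Sp79}) together with $Z_{\bG^{\mathrm{sp}}}(\bT^{\mathrm{sp}})=\bT^{\mathrm{sp}}$, but assembling them into the precise three-by-three diagram with matching $\Gal_F$-actions requires care.
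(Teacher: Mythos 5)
Your proof is correct, and it spells out the exactness and commutativity assertions that the paper's own proof simply declares ``clear.'' The one genuinely nontrivial point---and the only one the paper actually argues---is that $\ker(\mathrm{Res})$ is exactly the image of $\bT^{\mathrm{sp}}(\bar F)$, so that $\Omega_{\bar F}$ embeds into $\Aut_{\bar F}\bT^{\mathrm{sp}}$. Here you and the paper take different routes. You argue directly: an automorphism of $\bG^{\mathrm{sp}}$ fixing $\bT^{\mathrm{sp}}$ pointwise induces the identity on the entire root datum, so by the isogeny theorem (the paper's Theorem~\ref{rdisom}(ii), i.e.\ \cite[Thm.~2.9]{Sp79}) it equals $\mathrm{Int}(t)$ for some $t\in\bT^{\mathrm{sp}}(\bar F)$, with no decomposition of $\Aut_{\bar F,\bT^{\mathrm{sp}}}\bG^{\mathrm{sp}}$ needed. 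The paper instead exploits the $\Gal_F$-equivariant splitting it has just constructed in Proposition~\ref{4.1}, writes $\alpha=\beta\gamma$ with $\beta\in\bN/\bC(\bar F)$ and $\gamma$ in the pinned complement, and then kills the Weyl component of $\beta$ via the simple transitivity of $W$ on Weyl chambers. Both arguments ultimately rest on Springer's rigidity results; yours is the more economical in this instance, while the paper's flows naturally from the splitting it already has in hand. One small point of hygiene: when you say ``$g\in\bT^{\mathrm{sp}}(\bar F)$ (as $\bT^{\mathrm{sp}}$ is its own centralizer),'' recall that inner automorphisms are parameterized by $\bG^{\ad}(\bar F)$, so the centralizer argument gives $g\in\bT^{\ad}(\bar F)=\bT^{\mathrm{sp}}(\bar F)/\bC(\bar F)$ rather than an element of $\bT^{\mathrm{sp}}(\bar F)$ itself; this is harmless, since the vertical maps $t\mapsto\mathrm{Int}(t)$ in the diagram already have kernel $\bC(\bar F)$ and $\Omega_{\bar F}$ is a quotient by the image of $\bT^{\mathrm{sp}}(\bar F)$, but it is worth keeping straight.
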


\begin{proof}
It is clear that the diagram is commutative and the rows and columns are exact. 
The only thing one needs to show is that $\Aut_{\bar F,\bT^{\mathrm{sp}}}\bG^{\mathrm{sp}}/\bT^{\mathrm{sp}}(\bar F)$
embeds into $\Aut_{\bar F}\bT^{\mathrm{sp}}$ by restricting automorphisms in $\Aut_{\bar F,\bT^{\mathrm{sp}}}\bG^{\mathrm{sp}}$ to 
the maximal
torus $\bT^{\mathrm{sp}}$.
For any $\alpha\in \Aut_{\bar F,\bT^{\mathrm{sp}}}\bG^{\mathrm{sp}}$, write $\alpha=\beta\gamma$ where $\beta\in\bN/\bC(\bar F)$ and $\gamma$
fixes $\bT^{\mathrm{sp}}$ and $\bB$ by the splitting of Proposition \ref{4.1}.
If $\alpha$ is trivial in $\Aut_{\bar F}\bT^{\mathrm{sp}}$, then $\beta=\gamma^{-1}$ on $\bT^{\mathrm{sp}}$.
Since $W$ acts simply transitively on the Weyl chambers and $\gamma$ fixes the chamber corresponding
to $\bB$, $\beta$ belongs to the image of $\bT^{\mathrm{sp}}(\bar F)$. 
This implies $\gamma$ is trivial on $\bT^{\mathrm{sp}}$ and thus $\alpha=\beta$. 
\end{proof}

\begin{remark}\label{GTform}
The elements of $H^1(F,\Aut_{\bar F,\bT^{\mathrm{sp}}}\bG^{\mathrm{sp}})$ are in bijective correspondence with the
 $F$-forms of $(\bG^{\mathrm{sp}},\bT^{\mathrm{sp}}) $, i.e., the $F$-reductive groups $\bG$ together with an $F$-maximal torus $\bT$
such that after extending scalars to $\bar F$, there exists an $\bar F$-isomorphism 
\begin{equation*}
\phi:\bG^{\mathrm{sp}}\times_F\bar F\to \bG\times_F\bar F
\end{equation*} 
taking $\bT^{\mathrm{sp}}\times_F\bar F$ onto $\bT\times_F\bar F$.
The isomorphism class of $(\bG,\bT)$ is then represented by $[c_\sigma]\in H^1(F,\Aut_{\bar F,\bT^{\mathrm{sp}}}\bG^{\mathrm{sp}})$, where 
\begin{equation*}
c_\sigma(x):=\phi^{-1}(\sigma\phi(\sigma^{-1}(x))) \hspace{.1in}\forall x\in\bG^{\mathrm{sp}}(\bar F).
\end{equation*}
\end{remark}

\section{Proofs of the main results}\label{s5}
\subsection{Proof of Theorem \ref{main}}
We obtain Theorem \ref{main}(i) by Theorem \ref{rdr}.
The proof of Theorem \ref{main}(ii) consists of several ingredients which will be established separately. 
Lemmas \ref{lem2} and \ref{lem3} below are special cases of \cite[Proposition 10]{Vi96} and \cite[Theorem 1.1]{Ma09}.

\begin{lemma}\label{lem2}
Let $\bG$ be a connected reductive group over $\bar\Q$. Then there is a bijective correspondence from the equivalence classes
of finite dimensional $\bar\Q$-representations of $\bG$ to the equivalence classes of finite dimensional $\C$-representations 
of $\bG\times_{\bar\Q}\C$ given by base change $i:\bar\Q\to\C$.
\end{lemma}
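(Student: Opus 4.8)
The plan is to show that base change along $i\colon\bar\Q\hookrightarrow\C$ induces a well-defined map on isomorphism classes that is both injective and surjective. Well-definedness and compatibility with direct sums are immediate (tensor an isomorphism, resp.\ a direct sum decomposition, by $\C$ over $\bar\Q$). Since $\bG$ and $\bG\times_{\bar\Q}\C$ are connected reductive in characteristic $0$, hence linearly reductive, the categories of finite dimensional representations on both sides are semisimple; so it suffices to prove: (a) if $V,W$ are irreducible $\bar\Q$-representations of $\bG$ with $V\otimes_{\bar\Q}\C\cong W\otimes_{\bar\Q}\C$, then $V\cong W$; and (b) every irreducible $\C$-representation of $\bG\times_{\bar\Q}\C$ is isomorphic to $V\otimes_{\bar\Q}\C$ for some $\bar\Q$-representation $V$ of $\bG$. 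Granting (a) and (b), an arbitrary $\C$-representation, being a direct sum of irreducibles, descends, and two $\bar\Q$-representations with isomorphic base changes have matching isotypic decompositions by (a) together with uniqueness of decomposition over $\C$.

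For (a), I would use that the formation of the $\bG$-equivariant Hom space commutes with the flat base change $\bar\Q\to\C$: the space $\Hom_{\bG}(V,W)$ is the kernel of an explicit $\bar\Q$-linear map of finite dimensional $\bar\Q$-vector spaces determined by the comodule structures, so $\Hom_{\bG}(V,W)\otimes_{\bar\Q}\C\cong\Hom_{\bG\times_{\bar\Q}\C}(V\otimes_{\bar\Q}\C,\,W\otimes_{\bar\Q}\C)$. Comparing dimensions of the underlying spaces first forces $\dim_{\bar\Q}V=\dim_{\bar\Q}W$, and then the locus $U$ of isomorphisms inside the affine space $\Hom_{\bG}(V,W)$ is Zariski open (nonvanishing of a determinant) and defined over $\bar\Q$; it is nonempty because $U(\C)\ne\emptyset$ by hypothesis, hence has a $\bar\Q$-point since $\bar\Q$ is an infinite field. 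Any such point is an isomorphism $V\xrightarrow{\ \sim\ }W$ over $\bar\Q$.

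Part (b) is where I expect the actual work to lie, and where reductivity of $\bG$ is genuinely used. Because $\bar\Q$ is algebraically closed, $\bG$ is split over $\bar\Q$; fix a maximal torus $\bT$ and a Borel $\bB\supset\bT$. I would then invoke the highest weight classification: the irreducible $\bar\Q$-representations of $\bG$ are, up to isomorphism, the modules $V_{\bar\Q}(\lambda)$ with $\lambda$ ranging over the dominant characters in $X^{*}(\bT)$, and in characteristic $0$ these are absolutely irreducible. Base changing, $\bG\times_{\bar\Q}\C$ is split with the same root datum and dominant cone, $X^{*}(\bT_\C)=X^{*}(\bT)$, the construction of $V(\lambda)$ (e.g.\ as $H^{0}$ of the associated line bundle on the flag variety) commutes with flat base change, and $V_{\bar\Q}(\lambda)\otimes_{\bar\Q}\C\cong V_\C(\lambda)$; since every irreducible $\C$-representation of $\bG\times_{\bar\Q}\C$ is some $V_\C(\lambda)$, it descends. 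The main obstacle is thus to verify that this combinatorial classification, and the modules realizing it, are insensitive to the algebraically closed base field of characteristic $0$. An alternative, more geometric route to (b) is available: a complex representation is defined over a finitely generated $\bar\Q$-subalgebra $A\subset\C$, one specializes at a $\bar\Q$-point of $\Spec A$ (which exists by the Nullstellensatz), and the rigidity of representations of reductive groups in characteristic $0$—the vanishing of $\mathrm{Ext}^{1}$ in the representation category, equivalently of $H^{1}(\bG,-)$—shows the resulting family is locally constant, so the specialization base changes back to the original representation.
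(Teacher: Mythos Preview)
Your proof is correct. The paper does not actually give its own argument for this lemma: it simply records, in the sentence preceding the statement, that Lemmas \ref{lem2} and \ref{lem3} are special cases of \cite[Proposition 10]{Vi96} and \cite[Theorem 1.1]{Ma09}. So there is no detailed proof in the paper to compare against.

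Your direct argument---reducing to irreducibles via linear reductivity, handling injectivity by flat base change of $\Hom_{\bG}$, and handling surjectivity via the highest weight classification for split reductive groups in characteristic $0$---is complete and self-contained, and is more elementary than what the paper invokes. Interestingly, your \emph{alternative} route to (b), namely spreading out over a finitely generated $\bar\Q$-algebra, specializing at a $\bar\Q$-point, and appealing to rigidity of homomorphisms of reductive groups (vanishing of $H^1$), is precisely the content of Margaux's theorem \cite{Ma09} that the paper cites. So your primary approach is genuinely different from (and lighter than) the paper's citation, while your alternative coincides with it.

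One minor simplification in (a): since $V$ and $W$ are irreducible over the algebraically closed field $\bar\Q$, Schur's lemma already guarantees that any nonzero element of $\Hom_{\bG}(V,W)$ is an isomorphism; the ``open locus of isomorphisms has a $\bar\Q$-point'' step, while correct, is not needed.
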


\begin{lemma}\label{lem3}
Let $F\subset\C$ be two algebraically closed fields and $\bG,\bG'\subset\GL_{k,F}$ two connected reductive subgroups over $F$.
If $\bG\times_F\C$ and $\bG'\times_F\C$ are conjugate in $\GL_{k,\C}$, then $\bG$ and $\bG'$ are conjugate in $\GL_{k,F}$.
\end{lemma}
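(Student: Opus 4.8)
Lemma \ref{lem3} asserts that if $F \subset \C$ are algebraically closed and $\bG, \bG' \subset \GL_{k,F}$ are connected reductive subgroups that become conjugate after base change to $\C$, then they are already conjugate over $F$.

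\textbf{Plan.} The decisive feature of the hypotheses is that $F$ is algebraically closed and of characteristic zero (being a subfield of $\C$), so the lemma should be reduced to the statement that a finite-type scheme over $F$ which acquires a $\C$-point after base change already has an $F$-point. To realize this, I would introduce the transporter scheme
\[
Z \ :=\ \{\, g\in\GL_{k,F}\ :\ g\bG g^{-1}\subseteq\bG'\ \text{and}\ g^{-1}\bG' g\subseteq\bG \,\}.
\]
Since $F$ has characteristic zero, $\bG$ and $\bG'$ are smooth closed subgroup schemes of $\GL_{k,F}$, and transporters between smooth closed subgroup schemes of a smooth affine group scheme over a field are representable by closed subschemes (cf.\ \cite{DG62}); hence $Z$ is a closed subscheme of $\GL_{k,F}$, in particular of finite type over $F$. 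On $F$-points, $g\in Z(F)$ means precisely $g\bG g^{-1}=\bG'$ inside $\GL_{k,F}$, the two inclusions together forcing equality.

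Next I would use that formation of the transporter commutes with base change: $Z\times_F\C$ is canonically the transporter of $\bG\times_F\C$ into $\bG'\times_F\C$ inside $\GL_{k,\C}$. By hypothesis some element of $\GL_k(\C)$ conjugates $\bG\times_F\C$ onto $\bG'\times_F\C$, so $(Z\times_F\C)(\C)\neq\emptyset$; in particular $Z\times_F\C$ is nonempty, whence $Z$ is nonempty (otherwise $Z\times_F\C$ would be empty). A nonempty finite-type scheme over the algebraically closed field $F$ has a closed point whose residue field, being finite over $F$ by the Nullstellensatz, equals $F$, so $Z(F)\neq\emptyset$. Choosing any $g\in Z(F)\subseteq\GL_k(F)$ yields $g\bG g^{-1}=\bG'$ in $\GL_{k,F}$, which is the assertion.

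I do not anticipate a real obstacle; the one point deserving care is invoking the representability of the transporter by a \emph{closed} (not merely constructible) subscheme, which is classical in characteristic zero but should be cited properly. Alternatively one may avoid schemes entirely and argue model-theoretically: ``$\exists g\in\GL_k\ (g\bG g^{-1}=\bG')$'' is a first-order sentence in the language of rings with parameters in $F$, and $F\hookrightarrow\C$ is an elementary embedding of algebraically closed fields of the same characteristic, so it holds over $F$ because it holds over $\C$. Note that connectedness and reductivity of $\bG,\bG'$ are inessential to this lemma beyond ensuring the relevant group schemes are smooth of finite type; they appear because the lemma is applied to the $\ell$-adic monodromy groups $\bG_\ell$.
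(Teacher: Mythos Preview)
Your argument is correct. The paper does not supply its own proof of this lemma; it simply records, just before the statement, that Lemmas \ref{lem2} and \ref{lem3} are special cases of \cite[Proposition 10]{Vi96} and \cite[Theorem 1.1]{Ma09}. Your transporter/Nullstellensatz argument is a genuinely different and more elementary route than invoking those cited rigidity results: you reduce everything to the fact that the transporter $Z$ is a closed (hence finite-type) subscheme of $\GL_{k,F}$ whose formation commutes with base change, so a $\C$-point of $Z_\C$ forces $Z\neq\emptyset$ and then $Z(F)\neq\emptyset$ by the Nullstellensatz. The model-theoretic alternative you sketch is equally valid and makes the same point. What the cited references buy is generality (Margaux's theorem covers homomorphisms between affine group schemes over more general bases), whereas your approach makes transparent that over an algebraically closed field nothing beyond finite type is needed; as you correctly observe, connectedness and reductivity are not used in the proof itself.
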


Let $\bT^{\ss}_\Q\subset\bT_\Q\subset\GL_{k,\Q}$ be the subtori in Theorem \ref{bicharQ}. Then up to conjugation we may assume
\begin{equation}\label{e1}
\bT^{\ss}_\Q\times_\Q\Q_\ell\subset\bT_\Q\times_\Q\Q_\ell\subset\GL_{k,\Q_\ell}
\end{equation}
is a formal bi-character of the algebraic monodromy group $\bG_\ell$ for all sufficiently large $\ell$.
Let $M\in\GL_k(\bar\Q)$ be an invertible matrix such that $\phi_M(\bT_\Q\times_\Q\bar\Q):=M(\bT_\Q\times_\Q\bar\Q )M^{-1}$ is diagonal in $\GL_{k,\bar\Q}$.
This matrix is chosen once and for all.
Then $\phi_M(\bT^{\ss}_\Q\times_\Q\bar\Q)\subset \phi_M(\bT_\Q\times_\Q\bar\Q)$ 
is defined over $\Q$ because they are split subtori of the diagonal. We obtain a chain of algebraic groups
 \begin{equation}\label{e2}
\phi_M(\bT^{\ss}_\Q):=\phi_M(\bT^{\ss}_\Q\times_\Q\bar\Q)\subset \phi_M(\bT_\Q):=\phi_M(\bT_\Q\times_\Q\bar\Q)\subset \GL_{k,\Q}.
\end{equation}

\begin{prop}\label{prop4}
There exists a connected split reductive subgroup $\bG_{\Q}^{\mathrm{sp}}$ of $\GL_{k,\Q}$ admitting (\ref{e2}) as a formal bi-character such that $\bG_{\Q}^{\mathrm{sp}}\times_\Q\bar\Q_\ell$ and $\bG_\ell\times_{\Q_\ell}\bar\Q_\ell$ are conjugate in $\GL_{k,\bar\Q_\ell}$ for all sufficiently large $\ell$.
\end{prop}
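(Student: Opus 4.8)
The plan is to build $\bG_\Q^{\mathrm{sp}}$ abstractly from the root datum, which is $\ell$-independent by Theorem~\ref{rdr}, to realise it inside $\GL_{k,\Q}$ by means of a faithful $\Q$-representation whose restriction to a maximal torus is the prescribed diagonal torus $\phi_M(\bT_\Q)$, and then to push the $\GL_{k,\C}$-conjugacy of Corollary~\ref{similar} down to $\bar\Q_\ell$ via Lemma~\ref{lem3}. Write $\Psi=(\X,R,\X^\vee,R^\vee)$ for the common root datum of $(\bG_\ell\times_{\Q_\ell}\C,\bT_\C)$. The $\GL_k(\C)$-conjugation carrying $\bT_\C$ onto $\phi_M(\bT_\Q)\times_\Q\C$ identifies $\X$ with the character group of $\phi_M(\bT_\Q)$ (both are $\Z^k$ modulo the relations among the $k$ coordinate characters of the ambient diagonal torus, and these relations depend only on the common weight multiset $S:=\mathrm{Char}(\C^k)\subset\X$ of Theorem~\ref{bicharC}); in particular the weight multiset of the inclusion $\phi_M(\bT_\Q)\subset\GL_{k,\Q}$ is $S$. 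As $\phi_M(\bT_\Q)$ is a split $\Q$-torus with character group $\X$, Theorem~\ref{rdisom}(i) produces a connected split reductive $\Q$-group $\bH$ having $\phi_M(\bT_\Q)$ as a split maximal torus and realising $\Psi$.

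Next I would produce the representation. Decompose the tautological $\C$-module $\C^k$ of $\bG_{\ell_1}\times_{\Q_{\ell_1}}\C$, for one fixed large prime $\ell_1$, into irreducible constituents; the resulting multiset of dominant weights is determined by $\Psi$ and $S$ alone, since $S$ is $W$-invariant and the irreducible characters form a $\Z$-basis of $\Z[\X]^{W}$, where $W$ is the Weyl group of $\Psi$. By Lemma~\ref{lem2} the irreducible representation of $\bH$ attached to each such weight is defined over $\bar\Q$, hence over $\Q$ since $\bH$ is split; let $\rho\colon\bH\to\GL_{k,\Q}$ be their direct sum. Its restriction to $\phi_M(\bT_\Q)$ is a diagonal embedding with weight multiset $S$, as is the given inclusion, so after conjugating $\rho$ by a permutation matrix in $\GL_k(\Q)$ we may assume $\rho|_{\phi_M(\bT_\Q)}$ is the inclusion $\phi_M(\bT_\Q)\hookrightarrow\GL_{k,\Q}$. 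Over $\C$ the representation $\rho\times_\Q\C$ has the same character as the tautological representation of $\bG_{\ell_1}\times_{\Q_{\ell_1}}\C$, so it is faithful; hence $\rho$ is a closed immersion and $\bG_\Q^{\mathrm{sp}}:=\rho(\bH)$ is a connected split reductive subgroup of $\GL_{k,\Q}$ with maximal torus $\phi_M(\bT_\Q)$.

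The step I expect to be the main obstacle is verifying that (\ref{e2}) is genuinely a formal bi-character of $\bG_\Q^{\mathrm{sp}}$ in the sense of Definition~\ref{formal2}, i.e.\ that $\phi_M(\bT^{\ss}_\Q)=(\phi_M(\bT_\Q)\cap(\bG_\Q^{\mathrm{sp}})^{\der})^{\circ}$. Both sides are $\Q$-subtori of $\phi_M(\bT_\Q)$, and I would identify each through its cocharacter group inside $\X^\vee$: for the right-hand side this cocharacter group is the subgroup spanned by $R^\vee$, directly from the construction of the maximal torus of the derived group out of $\Psi$; for the left-hand side, Theorem~\ref{bicharQ} presents $\bT^{\ss}_\Q$ as a common $\Q$-form of the semisimple tori $\bT_\ell^{\ss}$ appearing in the formal bi-characters $\bT_\ell^{\ss}\subset\bT_\ell$ of $\bG_\ell$, whose cocharacter groups are spanned by the ($\ell$-independent) coroots $R^\vee$. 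Hence the two subtori coincide. The delicacy is bookkeeping: one must keep the identifications of the character and cocharacter lattices furnished by Theorems~\ref{bicharQ}, \ref{rdr} and \ref{rdisom} mutually compatible, together with the conjugating element fixed at the outset.

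Finally I would descend the conjugacy class. By the equality of characters just noted together with Lemma~\ref{lem2}, $\bG_\Q^{\mathrm{sp}}\times_\Q\C$ and $\bG_{\ell_1}\times_{\Q_{\ell_1}}\C$ are conjugate in $\GL_{k,\C}$; by Corollary~\ref{similar} the latter is conjugate in $\GL_{k,\C}$ to $\bG_\ell\times_{\Q_\ell}\C$ for every prime $\ell$, so $\bG_\Q^{\mathrm{sp}}\times_\Q\C$ and $\bG_\ell\times_{\Q_\ell}\C$ are conjugate in $\GL_{k,\C}$. For each sufficiently large $\ell$, fix an embedding $\bar\Q_\ell\hookrightarrow\C$; then $\bG_\Q^{\mathrm{sp}}\times_\Q\bar\Q_\ell$ and $\bG_\ell\times_{\Q_\ell}\bar\Q_\ell$ are connected reductive subgroups of $\GL_{k,\bar\Q_\ell}$ whose base changes to $\C$ are conjugate in $\GL_{k,\C}$, so Lemma~\ref{lem3} shows they are conjugate in $\GL_{k,\bar\Q_\ell}$, as desired.
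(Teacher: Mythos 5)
Your proposal follows the same strategy as the paper: reconstruct a split $\Q$-group $\bG_\Q^{\mathrm{sp}}$ from the common root datum via Theorem~\ref{rdisom}(i), realise it as a subgroup of $\GL_{k,\Q}$ with the prescribed diagonal formal bi-character, and then pass the $\GL_{k,\C}$-conjugacy of Corollary~\ref{similar} down to $\GL_{k,\bar\Q_\ell}$ using Lemma~\ref{lem3}. Where the paper compresses the $\Q$-realisation into a single citation of Tits's theorem on descending representations for split groups, you unpack it into an explicit decomposition of the tautological module into $\Q$-rational irreducibles followed by a permutation-matrix adjustment to match $\rho|_{\phi_M(\bT_\Q)}$ with the given inclusion; and where the paper dispatches the verification that $\phi_M(\bT_\Q^{\ss})$ is the correct maximal torus of $(\bG_\Q^{\mathrm{sp}})^{\der}$ with a terse ``we may assume,'' you spell out the comparison of cocharacter lattices (though strictly the cocharacter group of the maximal torus of the derived group is the saturation of $\Z R^\vee$ in $\X^\vee$, not $\Z R^\vee$ itself; both are determined by the root datum, so the conclusion is unaffected). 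These are elaborations of the same argument rather than a genuinely different route, and your write-up is correct.
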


\begin{proof}
Embed $\bar\Q\subset\bar\Q_{\ell}\subset\C$ for $\ell\gg0$. 
Then $M\in\GL_k(\bar\Q)\subset\GL_k(\bar\Q_{\ell})\subset\GL_k(\C)$ and the base change of (\ref{e2}) to $\C$
\begin{equation}\label{e3}
\phi_M(\bT^{\ss}_\Q)\times_{\Q}\C\subset \phi_M(\bT_\Q)\times_{\Q}\C\subset\GL_{k,\C}
\end{equation}
is a formal bi-character of $\phi_M(\bG_{\ell}\times_{\Q_{\ell}}\C)$. Let $\bG_\Q^{\mathrm{sp}}$ be the connected split reductive group over $\Q$ such that 
$\bG_\Q^{\mathrm{sp}}\times_\Q\C$ and $\bG_{\ell}\times_{\Q_{\ell}}\C$ are isomorphic (Theorem \ref{rdisom}(i)). Then $\bG_\Q^{\mathrm{sp}}$ can be embedded into $\GL_{k,\Q}$ such that $\bG_\Q^{\mathrm{sp}}\times_\Q\C$ and $\bG_{\ell}\times_{\Q_{\ell}}\C$ are conjugate in $\GL_{k,\C}$ by Lemma \ref{lem2} and the fact that any $\bar \Q$-representation of $\bG_\Q^{\mathrm{sp}}\times_\Q\bar\Q$ can be descended to a $\Q$-representation of $\bG_\Q^{\mathrm{sp}}$ \cite[Theorem 2.5]{Ti71}. Hence, $\bG_\Q^{\mathrm{sp}}\times_\Q\C$ and $\bG_{\ell}\times_{\Q_\ell}\C$ are also conjugate in $\GL_{k,\C}$ for all sufficiently large $\ell$ and 
any embedding $\bar\Q_\ell\subset\C$ by Theorem \ref{rdr}.
This implies $\bG_\Q^{\mathrm{sp}}\times_\Q\bar\Q_\ell$ and $\bG_{\ell}\times_{\Q_\ell}\bar\Q_\ell$ are conjugate in $\GL_{k,\bar\Q_\ell}$ by $\bar\Q_\ell\subset\C$ and Lemma \ref{lem3}. 
Since $\bG_\Q^{\mathrm{sp}}$ is split and (\ref{e3}) is a formal bi-character of $\phi_M(\bG_{\ell}\times_{\Q_{\ell}}\C)$,  we may assume (\ref{e2}) is a formal bi-character of $\bG_\Q^{\mathrm{sp}}$. 
\end{proof}

\begin{definition}\label{d4.5}
For all sufficiently large $\ell$, define the following notations.
\begin{enumerate}[($i$)]
\item $\bT_\Q^{\mathrm{sp}}:=\phi_M(\bT_\Q)$
\item $\bT_\Q^{\mathrm{ssp}}:=\phi_M(\bT^{\ss}_\Q)$
\item $\bG_{\Q_\ell}^{\mathrm{sp}}:=\bG_\Q^{\mathrm{sp}}\times_\Q\Q_\ell$
\item $\bT_{\Q_\ell}^{\mathrm{sp}}:=\bT_\Q^{\mathrm{sp}}\times_\Q\Q_\ell$
\item $\bT_{\Q_\ell}^{\mathrm{ssp}}:=\bT_\Q^{\mathrm{ssp}}\times_\Q\Q_\ell$
\end{enumerate}
\end{definition}

For any non-Archimedean valuation $\bar v$ on $\bar\Q$ extending the $\ell$-adic valuation on $\Q$, there exists an
embedding $i_{\bar v}:\bar\Q\hookrightarrow \bar\Q_\ell$ such that the restriction of 
the natural non-Archimedean valuation of $\bar\Q_\ell$ to $\bar\Q$ is
 $\bar v$. Then 
we obtain a monomorphism $f_{\bar v}:\Gal_{\Q_\ell}\hookrightarrow\Gal_\Q$ such that 
the image of $f_{\bar v}$ is the decomposition subgroup of $\Gal_\Q$ at $\bar v$.

\begin{lemma}\label{prop1}
For any non-Archimedean valuation $\bar v$ on $\bar\Q$ extending the $\ell$-adic valuation on $\Q$, 
there is a natural morphism $h_{\bar v}$ of the diagram (\ref{ses3}) for $(\bG_\Q^{\mathrm{sp}},\bT_\Q^{\mathrm{sp}})$ to the diagram (\ref{ses3}) for $(\bG_{\Q_\ell}^{\mathrm{sp}},\bT_{\Q_\ell}^{\mathrm{sp}})$ satisfying the following.
\begin{enumerate}[(i)]
\item The morphism $h_{\bar v}$ is compatible with $f_{\bar v}:\Gal_{\Q_\ell}\to\Gal_\Q$ in the sense of \cite[Chapter 1 $\mathsection2.4$]{Se97}, i.e., when we view the diagram (\ref{ses3}) for $\Q$ 
as a $\Gal_{\Q_\ell}$-diagram via $f_{\bar v}$, then $h_{\bar v}$ is a $\Gal_{\Q_\ell}$-morphism of $\Gal_{\Q_\ell}$-diagrams.
\item The maps 
$h_{\bar v}:\mathrm{Out}_{\bar\Q}\bG^{\mathrm{sp}}_\Q\to\mathrm{Out}_{\bar\Q_\ell}\bG^{\mathrm{sp}}_{\Q_\ell}$ and $h_{\bar v}:\Omega_{\bar\Q}\to\Omega_{\bar\Q_\ell}$ are isomorphisms.
\end{enumerate}
\end{lemma}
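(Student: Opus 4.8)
The plan is to construct the morphism $h_{\bar v}$ level by level on the three rows of the diagram \eqref{ses3}, using the fact that $\bG_\Q^{\mathrm{sp}}$ and $\bT_\Q^{\mathrm{sp}}$ are \emph{split} over $\Q$ (indeed diagonal in $\GL_{k,\Q}$ after conjugation by $M$), so that extending scalars from $\bar\Q$ to $\bar\Q_\ell$ along $i_{\bar v}$ commutes with forming $\bN/\bC$, $\bT^{\mathrm{sp}}$, the automorphism group, the Weyl group, the root datum, etc. First I would observe that base change along $i_{\bar v}:\bar\Q\hookrightarrow\bar\Q_\ell$ gives a group homomorphism $\Aut_{\bar\Q}\bG_\Q^{\mathrm{sp}}\to\Aut_{\bar\Q_\ell}\bG_{\Q_\ell}^{\mathrm{sp}}$ (since $\bG_{\Q_\ell}^{\mathrm{sp}}=\bG_\Q^{\mathrm{sp}}\times_\Q\Q_\ell$ and $\bG_\Q^{\mathrm{sp}}\times_\Q\bar\Q_\ell=(\bG_\Q^{\mathrm{sp}}\times_\Q\bar\Q)\times_{\bar\Q}\bar\Q_\ell$), and that this homomorphism carries $\bT^{\mathrm{sp}}(\bar\Q)$ into $\bT^{\mathrm{sp}}(\bar\Q_\ell)$, carries $\Aut_{\bar\Q,\bT_\Q^{\mathrm{sp}}}\bG_\Q^{\mathrm{sp}}$ into $\Aut_{\bar\Q_\ell,\bT_{\Q_\ell}^{\mathrm{sp}}}\bG_{\Q_\ell}^{\mathrm{sp}}$, and carries $\bN/\bC(\bar\Q)$ into $\bN/\bC(\bar\Q_\ell)$. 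These three maps, together with the identity on the quotients, will assemble into the desired morphism of diagrams once I check compatibility (immediate from functoriality of base change).

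Next I would check the two isomorphism claims. For $h_{\bar v}:\Omega_{\bar\Q}\to\Omega_{\bar\Q_\ell}$ and $h_{\bar v}:\mathrm{Out}_{\bar\Q}\bG_\Q^{\mathrm{sp}}\to\mathrm{Out}_{\bar\Q_\ell}\bG_{\Q_\ell}^{\mathrm{sp}}$: since both groups are split, $\mathrm{Out}$ is identified with the automorphism group of the based root datum (which depends only on the abstract root datum, hence is unchanged by base change to a bigger algebraically closed field), and $\Omega_{\bar F}$ is identified inside $\Aut_{\bar F}\bT^{\mathrm{sp}}$ with the group of automorphisms of the root datum $\psi(\bG^{\mathrm{sp}},\bT^{\mathrm{sp}})$ preserving $R$ (by Proposition \ref{4.3}, it sits in the extension $1\to W\to\Omega_{\bar F}\to\mathrm{Out}_{\bar F}\bG^{\mathrm{sp}}\to1$, and both $W$ and $\mathrm{Out}$ are determined by the root datum). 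The root datum of $(\bG_\Q^{\mathrm{sp}},\bT_\Q^{\mathrm{sp}})$ is literally the same free-abelian-group data as that of $(\bG_{\Q_\ell}^{\mathrm{sp}},\bT_{\Q_\ell}^{\mathrm{sp}})$ — the character and cocharacter lattices are unchanged because $\bT^{\mathrm{sp}}$ is split and the roots are the same finite subset of $\X$ — so $h_{\bar v}$ is the identity on these lattices and on $R$, hence an isomorphism on $\Omega$ and on $\mathrm{Out}$. I would phrase this via Theorem \ref{rdisom}(i): the split pair is determined up to isomorphism by its root datum, and base change preserves the root datum.

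Finally I would verify $\Gal_{\Q_\ell}$-equivariance, i.e.\ compatibility with $f_{\bar v}:\Gal_{\Q_\ell}\hookrightarrow\Gal_\Q$. The Galois action on $\Aut_{\bar F}\bG^{\mathrm{sp}}$ is the twisted conjugation \eqref{gal}; for $\sigma\in\Gal_{\Q_\ell}$ and its image $f_{\bar v}(\sigma)\in\Gal_\Q$, the embedding $i_{\bar v}$ was chosen precisely so that $i_{\bar v}\circ f_{\bar v}(\sigma)=\sigma\circ i_{\bar v}$ on $\bar\Q$, whence base change along $i_{\bar v}$ intertwines the action of $f_{\bar v}(\sigma)$ on $\Aut_{\bar\Q}\bG_\Q^{\mathrm{sp}}$ with the action of $\sigma$ on $\Aut_{\bar\Q_\ell}\bG_{\Q_\ell}^{\mathrm{sp}}$, and likewise on each term of \eqref{ses3}. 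I expect the only genuinely delicate point to be bookkeeping: making sure the chosen splitting of Proposition \ref{4.1} (the pinning $\{u_\alpha\}$, which lives over $\Q$) is carried by base change to a compatible pinning over $\Q_\ell$, so that the section $\mathrm{Out}\to\Aut_{\bar F,\bT^{\mathrm{sp}}}$ used to define $\Omega_{\bar F}\twoheadrightarrow\mathrm{Out}_{\bar F}$ is itself compatible with $h_{\bar v}$; this is where one has to be careful that everything in sight is defined over $\Q$ before extending. Everything else is formal functoriality of base change of group schemes and of Galois cohomology.
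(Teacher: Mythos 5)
Your proposal follows essentially the same route as the paper: construct $h_{\bar v}$ term-by-term by base change along $i_{\bar v}$, deduce the $\mathrm{Out}$ and $\Omega$ isomorphisms from the fact that the root datum of the split pair is unchanged by extension of scalars (the paper invokes Theorem \ref{rdisom}(ii) and the exactness of the bottom row of (\ref{ses3}); you invoke \ref{rdisom}(i) and the identification of $\Omega$ with automorphisms of the root datum, which amounts to the same thing), and verify $\Gal_{\Q_\ell}$-compatibility from $i_{\bar v}\circ f_{\bar v}(\sigma)=\sigma\circ i_{\bar v}$. The pinning compatibility you flag is indeed automatic because the pinning of Proposition \ref{4.1} is chosen over $\Q$ and base-changes to the one over $\Q_\ell$, so there is no gap.
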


\begin{proof}
The embedding $i_{\bar v}:\bar\Q\hookrightarrow \bar\Q_\ell$ identifies the following 
natural inclusions and canonical isomorphisms:
\begin{equation}
\begin{aligned}\label{comwith}
\bT^{\mathrm{sp}}_\Q(\bar\Q)&\subset\bT_{\Q_\ell}^{\mathrm{sp}}(\bar\Q_\ell);\\
\bN_\Q/\bC_\Q(\bar\Q)&\subset\bN_{\Q_\ell}/\bC_{\Q_\ell}(\bar\Q_\ell);\\
\Aut_{\bar\Q,\bT_\Q^{\mathrm{sp}}}\bG_\Q^{\mathrm{sp}}&\subset\Aut_{\bar\Q_\ell,\bT_{\Q_\ell}^{\mathrm{sp}}}\bG_{\Q_\ell}^{\mathrm{sp}};\\
\Aut_{\bar\Q}\bT_\Q^{\mathrm{sp}}&\cong\Aut_{\bar\Q_\ell}\bT_{\Q_\ell}^{\mathrm{sp}};\\
\mathrm{Weyl~group~for}~\bG_\Q^{\mathrm{sp}}\times_\Q\bar\Q&\cong \mathrm{Weyl~group~for}~\bG^{\mathrm{sp}}_{\Q_\ell}\times_{\Q_\ell}\bar\Q_\ell,
\end{aligned}
\end{equation}
which induce two inclusions:
\begin{align*}
\mathrm{Out}_{\bar\Q}\bG_\Q^{\mathrm{sp}}&\subset\mathrm{Out}_{\bar\Q_\ell}\bG_{\Q_\ell}^{\mathrm{sp}};\\
\Omega_{\bar\Q}&\subset\Omega_{\bar\Q_\ell},
\end{align*}
where the first one is an isomorphism by Theorem \ref{rdisom}(ii).
Hence, the second one is also an isomorphism by the isomorphism of the outer automorphism groups, the isomorphism of the Weyl groups, 
and  the exactness of 
the bottom row of the diagram (\ref{ses3}). These inclusions and isomorphisms comprise $h_{\bar v}$, which is
compatible with $f_{\bar v}$ because (\ref{comwith}) is compatible with $f_{\bar v}$.
\end{proof}

We have the $\bar\Q$-isomorphism
$\phi_M: \bT_\Q\times_\Q\bar\Q\to \bT_\Q^{\mathrm{sp}}\times_\Q\bar\Q$. For all sufficiently large $\ell$ and $\bar v$ as above, 
$M_{\bar v}:=i_{\bar v}(M)$ belongs to $\GL_k(\bar\Q_\ell)$ and we obtain a $\bar\Q_\ell$-isomorphism
$\phi_{M_{\bar v}}: \bT_\Q\times_\Q\bar\Q_\ell\to \bT_{\Q_\ell}^{\mathrm{sp}}\times_{\Q_\ell}\bar\Q_\ell$.
The corollary below follows directly from (\ref{action1}) and Lemma \ref{prop1}.

\begin{cor}\label{cor1}
Let 
\begin{equation}\label{e4}
\begin{aligned}
(c_\sigma)&:=(c_\sigma=\phi_M(\phi^{-1}_{\sigma M}) :\sigma\in\Gal_\Q)\in Z^1(\Q,\Aut_{\bar\Q}\bT_\Q^{\mathrm{sp}})\\
(c_{\bar v,\sigma})&:=(c_{\bar v,\sigma}=\phi_{M_{\bar v}}(\phi^{-1}_{\sigma M_{\bar v}}) :\sigma\in\Gal_{\Q_\ell})\in Z^1(\Q_\ell,\Aut_{\bar\Q_\ell}\bT_{\Q_\ell}^{\mathrm{sp}})
\end{aligned}
\end{equation}
be the cocycles whose cohomology classes  represent $\bT_\Q$ and $\bT_\Q\times_\Q\Q_\ell$ respectively.
Then $c_{\bar v,\sigma}=h_{\bar v}\circ c_\sigma \circ f_{\bar v}$ for all $\sigma\in\Gal_{\Q_\ell}$.
\end{cor}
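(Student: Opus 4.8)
The plan is to unwind the two cocycles into conjugations by explicit matrices and then check that the two identifications packaged into $h_{\bar v}$ (Lemma~\ref{prop1}) interchange them. Throughout write $\phi_N$ for conjugation by an invertible matrix $N$, and for $\tau$ in a Galois group write $\tau N$ for the matrix obtained by applying $\tau$ entrywise; directly from (\ref{gal}) one then has ${}^\sigma\!\phi_N=\phi_{\sigma N}$. Since $\bT_\Q$ and $\bT_\Q^{\mathrm{sp}}=\phi_M(\bT_\Q)$ are both defined over $\Q$, one checks that $\tau(M)\,\bT_\Q\,\tau(M)^{-1}=\bT_\Q^{\mathrm{sp}}$ for every $\tau\in\Gal_\Q$, so each $\phi_{\tau M}$ restricts to a $\bar\Q$-isomorphism $\bT_\Q\times_\Q\bar\Q\to\bT_\Q^{\mathrm{sp}}\times_\Q\bar\Q$. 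Applying the general twisting formula (\ref{action1}) (equivalently Remark~\ref{GTform}) to the isomorphism $\phi_M^{-1}$ realizing $\bT_\Q$ as a form of $\bT_\Q^{\mathrm{sp}}$ then produces precisely the cocycle $c_\sigma=\phi_M\circ\phi_{\sigma M}^{-1}$ of (\ref{e4}), which is conjugation by $M\,\sigma(M)^{-1}$ and hence an element of $\Aut_{\bar\Q}\bT_\Q^{\mathrm{sp}}$. The identical computation over $\Q_\ell$, with $M$ replaced by $M_{\bar v}=i_{\bar v}(M)$, gives $c_{\bar v,\sigma}=\phi_{M_{\bar v}}\circ\phi_{\sigma M_{\bar v}}^{-1}$.

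Next I would record two facts. First, the component of $h_{\bar v}$ on automorphisms of the torus is the canonical isomorphism $\Aut_{\bar\Q}\bT_\Q^{\mathrm{sp}}\cong\Aut_{\bar\Q_\ell}\bT_{\Q_\ell}^{\mathrm{sp}}$ of (\ref{comwith}) induced by $i_{\bar v}:\bar\Q\hookrightarrow\bar\Q_\ell$; it is a group homomorphism sending $\phi_N$ to $\phi_{i_{\bar v}(N)}$ for any $N\in\GL_k(\bar\Q)$ normalizing $\bT_\Q^{\mathrm{sp}}$. Second, by construction of $f_{\bar v}$ one has $i_{\bar v}\bigl(f_{\bar v}(\sigma)(x)\bigr)=\sigma\bigl(i_{\bar v}(x)\bigr)$ for all $x\in\bar\Q$ and $\sigma\in\Gal_{\Q_\ell}$; applying this entrywise to $M$ yields $i_{\bar v}\bigl(f_{\bar v}(\sigma)M\bigr)=\sigma(M_{\bar v})$.

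Finally I would combine these: for $\sigma\in\Gal_{\Q_\ell}$ set $\tau:=f_{\bar v}(\sigma)\in\Gal_\Q$; then
\[
h_{\bar v}(c_\tau)=h_{\bar v}(\phi_M)\circ h_{\bar v}(\phi_{\tau M})^{-1}=\phi_{i_{\bar v}(M)}\circ\phi_{i_{\bar v}(\tau M)}^{-1}=\phi_{M_{\bar v}}\circ\phi_{\sigma M_{\bar v}}^{-1}=c_{\bar v,\sigma},
\]
which is exactly the asserted identity $c_{\bar v,\sigma}=h_{\bar v}\circ c_\sigma\circ f_{\bar v}$ on $\Gal_{\Q_\ell}$. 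I expect no real obstacle beyond bookkeeping; the one point needing care is the first step, namely matching the compact form of the cocycle in (\ref{e4}) with the general formula (\ref{action1})/Remark~\ref{GTform} and confirming that the resulting automorphism lies in $\Aut_{\bar\Q}\bT_\Q^{\mathrm{sp}}$, the group on which the relevant component of $h_{\bar v}$ operates. Everything after that is the functoriality of the $i_{\bar v}$-identifications of (\ref{comwith}) together with the defining property of $f_{\bar v}$.
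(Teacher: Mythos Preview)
Your proposal is correct and follows essentially the same approach as the paper, which does not spell out a proof at all: the text preceding the corollary simply states that it ``follows directly from (\ref{action1}) and Lemma \ref{prop1}.'' Your write-up is precisely the unwinding of that sentence---you invoke (\ref{action1}) to express $c_\sigma$ and $c_{\bar v,\sigma}$ as conjugations by $M\,\sigma(M)^{-1}$ and $M_{\bar v}\,\sigma(M_{\bar v})^{-1}$, and then use the content of Lemma~\ref{prop1} (namely that $h_{\bar v}$ is induced by $i_{\bar v}$ and is compatible with $f_{\bar v}$) to match them.
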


\begin{prop}\label{prop5}
For all sufficiently large $\ell$ and $\bar v$ as above, there exists an 
 isomorphism 
\begin{equation*}
\phi_{\bar v}:(\bG_\ell\times_{\Q_\ell}\bar\Q_\ell,\bT_\Q\times_\Q\bar\Q_\ell)\to (\bG_{\Q_\ell}^{\mathrm{sp}}\times_{\Q_\ell}\bar\Q_\ell,\bT_{\Q_\ell}^{\mathrm{sp}}\times_{\Q_\ell}\bar\Q_\ell)
\end{equation*}
such that the cocycle 
$$(c_{\bar v,\sigma}'):=(c_{\bar v,\sigma}'=\phi_{\bar v}\sigma\phi^{-1}_{\bar v}\sigma^{-1}: \sigma\in\Gal_{\Q_\ell})
\in Z^1(\Q_\ell,\Aut_{\bar \Q_\ell,\bT_{\Q_\ell}^{\mathrm{sp}}}\bG_{\Q_\ell}^{\mathrm{sp}})$$
representing $(\bG_\ell,\bT_\Q\times_\Q\Q_\ell)$ (Remark \ref{GTform}) satisfies the equation in 
$Z^1(\Q_\ell,\Aut_{\bar\Q_\ell}\bT_{\Q_\ell}^{\mathrm{sp}})$:
\begin{equation*}
\mathrm{Res}(c_{\bar v,\sigma}')=(c_{\bar v,\sigma}),
\end{equation*}
where $\mathrm{Res}$ is the map in the diagram (\ref{ses3}), 
$\Omega_{\bar\Q_\ell}\subset \Aut_{\bar\Q_\ell}\bT_{\Q_\ell}^{\mathrm{sp}}$ in Proposition \ref{4.3}, and $(c_{\bar v,\sigma})$ in (\ref{e4}).
\end{prop}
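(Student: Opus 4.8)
The plan is to build the isomorphism $\phi_{\bar v}$ in two stages, first over $\bar\Q_\ell$ on the level of pairs (group with torus), and then to check that the resulting cocycle refines the torus cocycle $(c_{\bar v,\sigma})$. We begin with the isomorphism of pairs. By Proposition \ref{prop4} we have a conjugacy in $\GL_{k,\bar\Q_\ell}$ between $\bG_\ell\times_{\Q_\ell}\bar\Q_\ell$ and $\bG_{\Q_\ell}^{\mathrm{sp}}\times_{\Q_\ell}\bar\Q_\ell$; by Theorem \ref{bicharQ} (via \eqref{e1}) the chain \eqref{e1} is a formal bi-character of $\bG_\ell$, and by construction (Proposition \ref{prop4}, Definition \ref{d4.5}) the chain $\bT_{\Q_\ell}^{\mathrm{ssp}}\subset\bT_{\Q_\ell}^{\mathrm{sp}}\subset\GL_{k,\Q_\ell}$ is a formal bi-character of $\bG_{\Q_\ell}^{\mathrm{sp}}$. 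Now $\phi_{M_{\bar v}}$ carries $\bT_\Q\times_\Q\bar\Q_\ell$ isomorphically onto $\bT_{\Q_\ell}^{\mathrm{sp}}\times_{\Q_\ell}\bar\Q_\ell$ (and its semisimple part to the semisimple part); so over $\bar\Q_\ell$ we have two maximal tori of two conjugate reductive subgroups of $\GL_{k,\bar\Q_\ell}$, with matching ambient embeddings of the tori. Since all maximal tori of a connected reductive group over an algebraically closed field are conjugate, after adjusting the $\GL_k(\bar\Q_\ell)$-conjugating element by an element of the normalizer, I can arrange that the conjugation sends $\bT_\Q\times_\Q\bar\Q_\ell$ precisely onto $\bT_{\Q_\ell}^{\mathrm{sp}}\times_{\Q_\ell}\bar\Q_\ell$; composing with $\phi_{M_{\bar v}}^{-1}$ on the torus side if necessary, this produces the desired $\phi_{\bar v}$ of pairs.

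The second and main step is to identify $\mathrm{Res}(c_{\bar v,\sigma}')$ with $(c_{\bar v,\sigma})$. By Remark \ref{GTform}, the cocycle $c_{\bar v,\sigma}'=\phi_{\bar v}\,\sigma\phi_{\bar v}^{-1}\sigma^{-1}$ in $Z^1(\Q_\ell,\Aut_{\bar\Q_\ell,\bT_{\Q_\ell}^{\mathrm{sp}}}\bG_{\Q_\ell}^{\mathrm{sp}})$ represents the pair $(\bG_\ell,\bT_\Q\times_\Q\Q_\ell)$, and its image under $\mathrm{Res}$ (the restriction-to-the-torus map in the diagram \eqref{ses3}) is a cocycle in $Z^1(\Q_\ell,\Aut_{\bar\Q_\ell}\bT_{\Q_\ell}^{\mathrm{sp}})$. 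On the other hand the restriction of $\phi_{\bar v}$ to $\bT_\Q\times_\Q\bar\Q_\ell$ differs from $\phi_{M_{\bar v}}$ by an element of $\Aut_{\bar\Q_\ell}\bT_{\Q_\ell}^{\mathrm{sp}}$, but once the normalizer adjustment in Step 1 is made so that $\phi_{\bar v}|_{\bT}=\phi_{M_{\bar v}}$ on the nose (which we may further arrange, or absorb the discrepancy into a coboundary), the computation $\phi_{\bar v}\sigma\phi_{\bar v}^{-1}\sigma^{-1}|_{\bT_{\Q_\ell}^{\mathrm{sp}}}=\phi_{M_{\bar v}}\sigma\phi_{M_{\bar v}}^{-1}\sigma^{-1}=\phi_{M_{\bar v}}(\phi_{\sigma M_{\bar v}}^{-1})=c_{\bar v,\sigma}$ reduces to the defining formula \eqref{e4}, using that $\sigma$ acts on $M_{\bar v}$ entrywise and $\sigma\phi_{M_{\bar v}}\sigma^{-1}=\phi_{\sigma M_{\bar v}}$. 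The compatibility with $f_{\bar v}$ furnished by Lemma \ref{prop1} and Corollary \ref{cor1} is what guarantees this identity is an identity of $\Gal_{\Q_\ell}$-cocycles valued in the correctly-identified groups.

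The hard part will be Step 1, more precisely the bookkeeping that lets one choose $\phi_{\bar v}$ so that its restriction to $\bT_\Q\times_\Q\bar\Q_\ell$ is \emph{exactly} $\phi_{M_{\bar v}}$ (and not merely equal to it up to an automorphism of the torus), since only then does $\mathrm{Res}(c_{\bar v,\sigma}')$ literally equal $(c_{\bar v,\sigma})$ rather than a cohomologous cocycle. The mechanism is Theorem \ref{rdisom}(ii): an isomorphism of root data of $(\bG_{\Q_\ell}^{\mathrm{sp}},\bT_{\Q_\ell}^{\mathrm{sp}})$ and of $(\bG_\ell\times_{\Q_\ell}\bar\Q_\ell,\bT_\Q\times_\Q\bar\Q_\ell)$—available since the two are conjugate in $\GL_{k,\bar\Q_\ell}$ with matching bi-characters, so by Theorem \ref{rdr} they have the same root datum—lifts to a central isomorphism of pairs, unique up to $\mathrm{Int}(t)$ with $t\in\bT_{\Q_\ell}^{\mathrm{sp}}(\bar\Q_\ell)$; such an inner twist by a torus element changes the resulting cocycle only by a coboundary valued in $\bT_{\Q_\ell}^{\mathrm{sp}}(\bar\Q_\ell)$, so after the adjustment the equation $\mathrm{Res}(c_{\bar v,\sigma}')=(c_{\bar v,\sigma})$ holds on the nose. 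One also needs to check that this lift can be taken compatibly with the chosen embedding into $\GL_{k,\bar\Q_\ell}$, which is where Lemma \ref{lem2}, Lemma \ref{lem3} and the descent of representations (as in the proof of Proposition \ref{prop4}) re-enter; I expect this to be routine given the statements already proved, so the genuine obstacle is purely the coboundary-versus-equality normalization just described.
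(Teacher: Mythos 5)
Your proposal uses essentially the same approach as the paper: conjugate the pairs (Proposition \ref{prop4}), identify the root data via Theorem \ref{rdr} (after embedding $\bar\Q_\ell\hookrightarrow\C$), lift the root-datum isomorphism induced by $\phi_{M_{\bar v}}$ through Theorem \ref{rdisom}(ii), and exploit that the $\mathrm{Int}(t)$ ambiguity (with $t\in\bT_{\Q_\ell}^{\mathrm{sp}}(\bar\Q_\ell)$) acts trivially on $\bT_{\Q_\ell}^{\mathrm{sp}}$, so that $\phi_{\bar v}$ restricts to exactly $\phi_{M_{\bar v}}$ and the cocycle identity $\mathrm{Res}(c'_{\bar v,\sigma})=(c_{\bar v,\sigma})$ holds on the nose. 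The paper packages this by factoring $P_{\bar v}=N_{\bar v}M_{\bar v}$ and setting $\phi_{\bar v}:=\Lambda_{\bar v}\circ\phi_{P_{\bar v}}$ with $f(\Lambda_{\bar v})=f(\phi_{N_{\bar v}})^{-1}$; the one phrase worth tightening in your writeup is that the inner twist ``changes the cocycle only by a coboundary''---more precisely, $\mathrm{Int}(t)$ lies in the kernel of $\mathrm{Res}$, so the restricted cocycle is literally unchanged (not merely cohomologous), which is exactly the equality-on-the-nose you need.
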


\begin{proof}
It suffices to find an isomorphism $\phi_{\bar v}$ such that the restriction of $\phi_{\bar v}$
to $\bT_\Q\times_\Q\bar\Q_\ell$ is $\phi_{M_{\bar v}}$.
By Proposition \ref{prop4}, there exists $P_{\bar v}\in\GL_k(\bar\Q_\ell)$ such that 
$$\phi_{P_{\bar v}}(\bG_\ell\times_{\Q_\ell}\bar\Q_\ell,\bT_\Q\times_\Q\bar\Q_\ell):=
(P_{\bar v}(\bG_\ell\times_{\Q_\ell}\bar\Q_\ell)P_{\bar v}^{-1},P_{\bar v}(\bT_\Q\times_\Q\bar\Q_\ell)P_{\bar v}^{-1})
=(\bG_{\Q_\ell}^{\mathrm{sp}}\times_{\Q_\ell}\bar\Q_\ell,\bT_{\Q_\ell}^{\mathrm{sp}}\times_{\Q_\ell}\bar\Q_\ell).
$$
Write $P_{\bar v}=N_{\bar v}M_{\bar v}$ in $\GL_k(\bar\Q_\ell)$. 
Then by Proposition \ref{prop4} again, $\phi_{M_{\bar v}}(\bG_\ell\times_{\Q_\ell}\bar\Q_\ell)$ and $\bG_{\Q_\ell}^{\mathrm{sp}}\times_{\Q_\ell}\bar\Q_\ell$ have the same formal bi-character 
$$\bT_{\Q_\ell}^{\mathrm{ssp}}\times_{\Q_\ell}\bar\Q_\ell\subset \bT_{\Q_\ell}^{\mathrm{sp}}\times_{\Q_\ell}\bar\Q_\ell\subset\GL_{k,\bar\Q_\ell}.$$
Since the algebraic monodromy groups satisfy Hypothesis \ref{A}, the root data of 
$(\phi_{M_{\bar v}}(\bG_\ell\times_{\Q_\ell}\bar\Q_\ell),\phi_{M_{\bar v}}(\bT_\Q\times_\Q\bar\Q_\ell))$ and 
$(\bG_{\Q_\ell}^{\mathrm{sp}}\times_{\Q_\ell}\bar\Q_\ell,\bT_{\Q_\ell}^{\mathrm{sp}}\times_{\Q_\ell}\bar\Q_\ell)$ are identical
by embedding $\bar\Q_\ell$ into $\C$ and applying Theorem \ref{rdr}.
Let this root datum be $\Psi_{\bar v}$. Then the isomorphism $\phi_{N_{\bar v}}$ between the two pairs
$(\phi_{M_{\bar v}}(\bG_\ell\times_{\Q_\ell}\bar\Q_\ell),\phi_{M_{\bar v}}(\bT_{\Q}\times_\Q\bar\Q_\ell))$ and 
$(\bG_{\Q_\ell}^{\mathrm{sp}}\times_{\Q_\ell}\bar\Q_\ell,\bT_{\Q_\ell}^{\mathrm{sp}}\times_{\Q_\ell}\bar\Q_\ell)$
induces an automorphism $f(\phi_{N_{\bar v}})$ of $\Psi_{\bar v}$.
By Theorem \ref{rdisom}(ii), there exists an automorphism $\Lambda_{\bar v}$ of $(\bG_{\Q_\ell}^{\mathrm{sp}}\times_{\Q_\ell}\bar\Q_\ell,\bT_{\Q_\ell}^{\mathrm{sp}}\times_{\Q_\ell}\bar\Q_\ell)$ such that the induced map $f(\Lambda_{\bar v})$ on $\Psi_{\bar v}$ is equal to
$f(\phi_{N_{\bar v}})^{-1}$. Therefore, $\phi_{\bar v}:=\Lambda_{\bar v}\circ\phi_{P_{\bar v}}$ is the desired isomorphism.
\end{proof}

\begin{customthm}{\ref{main}(ii)}
\textit{Let $\{\Phi_\ell\}_\ell$ be the system (\ref{1}) and $\bG_\ell$ the connected algebraic monodromy group 
of $\Phi_\ell^{\ss}$ for all $\ell$.
Suppose Hypothesis \ref{A} is satisfied.
Then there exists 
a connected quasi-split reductive group $\bG_\Q$ defined over $\Q$ such that for all sufficiently 
large $\ell$,}
\begin{equation*}
\bG_\ell\cong\bG_\Q\times_\Q\Q_\ell.
\end{equation*}
\end{customthm}

\begin{proof}
Let $\Omega_{\bar\Q}$ (resp. $\Omega_{\bar\Q_\ell}$) be the group defined in Proposition \ref{4.3} for $(\bG_{\Q}^{\mathrm{sp}},\bT_{\Q}^{\mathrm{sp}})$ (resp. $(\bG_{\Q_\ell}^{\mathrm{sp}},\bT_{\Q_\ell}^{\mathrm{sp}})$).
From now on we assume $\ell$ is sufficiently large and $\bar v$ is a valuation of $\bar\Q$ extending the $\ell$-adic valuation of $\Q$. Then the cocycle $(c_{\bar v,\sigma})$ in (\ref{e4}) belongs to $Z^1(\Q_\ell,\Omega_{\bar\Q_\ell})$
by Proposition \ref{prop5}. 
We view $(c_\sigma)$ (resp. $(c_{\bar v,\sigma})$) as a homomorphism from $\Gal_\Q$ to $\Aut_{\bar\Q_\ell}\bT^{\mathrm{sp}}_{\Q_\ell}$ 
(resp. $\Gal_{\Q_\ell}$ to $\Omega_{\bar\Q_\ell}$) since the Galois action on the target group is trivial.
Since the image of $(c_\sigma)$ is finite, it is unramified except at finitely many primes. 
Hence, its image is determined by the image of the Frobenius elements (i.e., the image of $c_\sigma\circ f_{\bar v}$) by the Chebotarev density theorem.
Since $c_{\bar v,\sigma}=h_{\bar v}\circ c_\sigma \circ f_{\bar v}$ (Corollary \ref{cor1}), $\mathrm{Im}(c_{\bar v,\sigma})\subset\Omega_{\bar\Q_\ell}$, and 
$h_{\bar v}:\Omega_{\bar\Q}\to\Omega_{\bar\Q_\ell}$ is an isomorphism (Lemma \ref{prop1}) for all $\bar v|\ell$ and sufficiently large $\ell$,
the image of cocycle $(c_\sigma)$ is contained in 
 $\Omega_{\bar\Q}$, i.e.,  $(c_\sigma)\in Z^1(\Q,\Omega_{\bar\Q})$. 
Hence by the diagram (\ref{ses3}), the cocycle $(c_\sigma)$ maps to the cohomology class 
$[\bar c_\sigma]\in H^1(\Q,\mathrm{Out}_{\bar\Q}\bG_\Q^{\mathrm{sp}})$ 
and corresponds to a unique connected reductive quasi-split group $\bG_\Q$ over $\Q$ by Proposition \ref{4.1} and Theorem \ref{4.2}.
Let $[\bar c_{\bar v,\sigma}]$\footnote{For $\ell\gg0$, this class is just the image 
of the class $[(\bG_\ell,\bT_\Q\times_\Q\Q_\ell)]\in H^1(\Q_\ell,\Omega_{\bar\Q_\ell})$ which is independent of $\bar v$.} 
be the cohomology class of the cocycle $(\bar c_{\bar v,\sigma})\in Z^1(\Q_\ell,\mathrm{Out}_{\bar\Q_\ell}\bG_{\Q_\ell}^{\mathrm{sp}})$. 
Since $\bG_\ell$ is quasi-split (Corollary \ref{maxcor}),
 $[\bar c_{\bar v,\sigma}]$
corresponds to $\bG_\ell$ by construction (Proposition \ref{prop5}), Proposition \ref{4.1}, and Theorem \ref{4.2}.
Since $h_{\bar v}\circ\bar c_\sigma\circ f_{\bar v} = \bar c_{\bar v,\sigma}$ (in $\mathrm{Out}_{\bar\Q_\ell}\bG_{\Q_\ell}^{\mathrm{sp}}$) by Corollary \ref{cor1} and both $\bG_\Q$ and $\bG_\ell$ are quasi-split, we obtain
$\bG_\ell\cong\bG_\Q\times_\Q\Q_\ell$ by Theorem \ref{4.2}.
\end{proof}

\begin{remark}
\begin{enumerate}[($i$)]
\item Suppose $\bT_\Q$ is the projection of the Frobenius torus $\bT'_\Q=\bT'_{\bar v,\ell'}$ for some prime $\ell'$ and some $\bar v\in\Sigma_{\bar K}$ (see the proofs of
Corollary \ref{FMTC3} and Theorem \ref{bicharQ}). Then $\bG_\ell$ contains a conjugation of $\bT_\Q\times_\Q\Q_\ell$ as a maximal torus and is an inner twist of 
the quasi-split group $\bG_\Q\times_\Q\Q_\ell$ for all prime $\ell$ not divisible by $\bar v$.
\item Assume for simplicity that $\bG_\ell$ is of type $A_n$ and is an inner twist of $\bG_\Q\times_\Q\Q_\ell$ for all $\ell$. 
Constructing a common $\Q$-form of $\bG_\ell$ for all $\ell$ amounts to solve for a $\Q$-central simple algebra $A$ 
with prescribed local invariants $\tau_\ell\in\Q/\Z$ corresponding to the inner twists
for all primes $\ell$. By the fundamental exact sequence of Brauer groups for $\Q$
\begin{equation*}
1\to \mathrm{Br}(\Q)\to \bigoplus_v\mathrm{Br}(\Q_v)\to \Q/\Z\to 1,
\end{equation*}
finding such an algebra $A$ is equivalent to showing that the sum of these local invariants at the finite places 
belongs to $\Z/2\Z$. Since the only thing we know is $\tau_\ell=0$ 
for all sufficiently large $\ell$ (Theorem \ref{maxcor}), finding a $\Q$-form for all $\ell$ needs extra information. 
\item It is reasonable to ask if the data $\bT_\Q\subset\GL_{k,\Q}$ (a $\Q$-form of formal character of $\bG_\ell\subset\GL_{k,\Q_\ell}$), 
the $\ell$-independence of absolute root datum (Theorem \ref{rdr}), and Tits's theory of descending representations \cite{Ti71} are enough to construct for all sufficiently large $\ell$ a common $\Q$-form of the embeddings $\bG_\ell\subset\GL_{k,\Q_\ell}$. We tried but did not succeed. 
\end{enumerate}
\end{remark}

\subsection{Proofs of Corollary \ref{newcor} and \ref{cor}}
Apply the constructions in Definition \ref{newgp} to $\Gamma_\ell\subset\bG_\ell(\Q_\ell)$,
 we obtain the morphisms $\pi_\ell^{\sc}:\bG_\ell^{\sc}(\Q_\ell)\to\bG_\ell^{\der}(\Q_\ell)$ 
and $\pi_\ell^{\ss}:\bG_\ell(\Q_\ell)\to\bG_\ell^{\ss}(\Q_\ell)$
and the groups $\Gamma_\ell^{\sc}$ and $\Gamma_\ell^{\ss}$ for all $\ell$.

\begin{customcor}{\ref{newcor}}
\textit{Let $\cG^{\sc}$ be
a semisimple group scheme  over $\Z[\frac{1}{N}]$ for some $N$ whose generic fiber is $\mathbf{G}_\Q^{\sc}$, where 
$\bG_\Q$ us in Theorem \ref{main}.
For all sufficiently large $\ell$, 
we have }
$$\Gamma_\ell^{\sc}\cong\cG^{\sc}(\Z_\ell).$$
\end{customcor}

\begin{proof}
Let $\bG_\Q^{\sc}$ be the universal covering group of $\bG_\Q^{\der}$. By Theorem \ref{main}(ii), we have
$\bG_\ell^{\sc}\cong\bG_\Q^{\sc}\times_\Q\Q_\ell$ for $\ell\gg0$. 
Let $\cG^{\sc}$ and $\cG^{\der}$ be 
 semisimple group schemes  over $\Z[\frac{1}{N}]$ for some $N$ whose generic fibers are
 $\mathbf{G}_\Q^{\sc}$ and $\bG_\Q^{\der}$ respectively.
The central isogeny
$$\pi_\Q^{\sc}:\bG_\Q^{\sc}\to \bG_\Q^{\der}$$ 
can be extended to a morphism of smooth affine group schemes over $\Z[\frac{1}{N'}]$ ($N'$ is some multiple of $N$)
\begin{equation}\label{Zmap}
\pi_{\Z[\frac{1}{N'}]}^{\sc}:\cG^{\sc}\times_{\Z[\frac{1}{N}]}\Z[\frac{1}{N'}]\to \cG^{\der}\times_{\Z[\frac{1}{N}]}\Z[\frac{1}{N'}].
\end{equation}
Since all hyperspecial subgroups of $\bG_\Q^{\sc}(\Q_\ell)\cong\cG^{\sc}(\Q_\ell)$ are isomorphic \cite[$\mathsection2.5$]{Ti79} and 
$$\Gamma_\ell^{\sc}\subset\bG_\Q^{\sc}(\Q_\ell)\cong\cG^{\sc}(\Q_\ell)$$ for sufficiently large $\ell$ is hyperspecial by Theorem \ref{max}, we obtain
$\Gamma_\ell^{\sc}\cong\cG^{\sc}(\Z_\ell)$ for $\ell\gg0$ by \cite[$\mathsection3.9.1$]{Ti79}.
\end{proof}

Let $\ell\geq 5$ be prime, $\bH_\ell$ a connected algebraic group defined over $\Q_\ell$, and $\Delta_\ell$ a compact subgroup of $\bH_\ell(\Q_\ell)$. Then by embedding $\bH_\ell$ into some $\GL_{n,\Q_\ell}$ and finding some $\Z_\ell$-lattice of $\Q_\ell^n$ invariant under $\Delta_\ell$, one obtains
a finite subgroup $\bar\Delta_\ell$ of $\GL_n(\F_\ell)$ by taking mod $\ell$ reduction. Then $\mathrm{Lie}_\ell\bar\Delta_\ell$ (Definition \ref{Lie})
is independent of the embedding $\bH_\ell\subset\GL_{n,\Q_\ell}$ and the mod $\ell$ reduction because the kernel of $\Delta_\ell\twoheadrightarrow\bar\Delta_\ell$ is pro-solvable. This allows us to make the following definition.

\begin{definition}\label{cf}
For any prime $\ell\geq 5$ and compact subgroup $\Delta_\ell$ of $\bH_\ell(\Q_\ell)$,
the composition factors of Lie type in characteristic
$\ell$ of $\Delta_\ell$, denoted by $\mathrm{Lie}_\ell\Delta_\ell$, is defined to be the multiset
$\mathrm{Lie}_\ell\bar\Delta_\ell$ (Definition \ref{Lie}), where the finite group $\bar\Delta_\ell$ is constructed above.
\end{definition}

\begin{lemma}\label{lem9}
Suppose $\ell\geq 5$. Then $\mathrm{Lie}_\ell\Gamma_\ell=\mathrm{Lie}_\ell\Gamma_\ell^{\sc}$.
\end{lemma}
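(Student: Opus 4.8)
The plan is to compare $\Gamma_\ell$ with $\Gamma_\ell^{\sc}$ by factoring the comparison through $\Gamma_\ell^{\ss}$ and through the derived group, showing at each stage that only pro-solvable or central discrepancies are introduced, which do not affect composition factors of Lie type in characteristic $\ell$. Recall from Definition \ref{newgp} that $\Gamma_\ell^{\ss}=\pi^{\ss}(\Gamma_\ell)$ and $\Gamma_\ell^{\sc}=(\pi^{\ss}\circ\pi^{\sc})^{-1}(\Gamma_\ell^{\ss})$; since $\pi^{\ss}\circ\pi^{\sc}$ is a central isogeny onto $\bG_\ell^{\ss}$ composed with the quotient by the radical, the groups $\Gamma_\ell^{\ss}$, $\Gamma_\ell$, and $\Gamma_\ell^{\sc}$ are three compact subgroups of $\ell$-adic Lie groups related by maps with ``small'' kernels and cokernels.

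First I would treat the map $\pi^{\ss}:\bG_\ell\to\bG_\ell^{\ss}$. Its kernel is the radical $\bR_\ell$ of $\bG_\ell$, which is a torus, so $\ker(\pi^{\ss})(\Q_\ell)$ is abelian; hence $\Gamma_\ell\to\Gamma_\ell^{\ss}$ has abelian, in particular pro-solvable, kernel when restricted to the compact group $\Gamma_\ell\cap\bR_\ell(\Q_\ell)$. By Definition \ref{cf} and the remark preceding it, passing to the mod $\ell$ reduction $\bar\Gamma_\ell\twoheadrightarrow\bar\Gamma_\ell^{\ss}$ the kernel is pro-solvable, so $\mathrm{Lie}_\ell\Gamma_\ell=\mathrm{Lie}_\ell\Gamma_\ell^{\ss}$ because $\mathrm{Lie}_\ell$ (Definition \ref{Lie}) is insensitive to solvable normal subgroups and quotients — it is additive on short exact sequences and vanishes on solvable groups, exactly as recorded in $\mathsection\ref{mod l}$. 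Next I would treat $\pi^{\sc}:\bG_\ell^{\sc}\to\bG_\ell^{\der}$: this is a central isogeny, so its kernel is a finite central (hence abelian) group scheme, and the induced map $\Gamma_\ell^{\sc}\to(\pi^{\sc})(\Gamma_\ell^{\sc})$ has central kernel, with the cokernel (measuring the failure of surjectivity onto $\Gamma_\ell^{\der}:=\Gamma_\ell\cap\bG_\ell^{\der}(\Q_\ell)$, or more precisely the discrepancy with the preimage of $\Gamma_\ell^{\ss}$) controlled by an abelian group of bounded exponent. The key point is that all of these discrepancies become pro-solvable, and in fact their mod $\ell$ images are solvable for $\ell\geq5$, so again $\mathrm{Lie}_\ell$ is unchanged.

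The step I expect to be the main obstacle is the bookkeeping in the middle: identifying $\Gamma_\ell^{\sc}$, which is defined as the full preimage $(\pi^{\ss}\circ\pi^{\sc})^{-1}(\Gamma_\ell^{\ss})$ inside $\bG_\ell^{\sc}(\Q_\ell)$ — a priori a noncompact-looking but actually compact group — with something that differs from $\Gamma_\ell$ only by pro-solvable pieces. One must check that $\pi^{\ss}\circ\pi^{\sc}$ restricted to $\Gamma_\ell^{\sc}$ surjects onto $\Gamma_\ell^{\ss}$ with pro-solvable kernel, and that $\Gamma_\ell^{\ss}$ in turn is obtained from $\Gamma_\ell$ by a pro-solvable quotient; combining these through the diagram of central isogenies and radical quotients, and invoking that $\mathrm{Lie}_\ell$ (as defined via composition factors) factors through these pro-solvable extensions because of the additivity on short exact sequences stated in $\mathsection\ref{mod l}$. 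Concretely I would argue: $\mathrm{Lie}_\ell\Gamma_\ell=\mathrm{Lie}_\ell\Gamma_\ell^{\ss}$ (radical is a torus, kernel abelian), and $\mathrm{Lie}_\ell\Gamma_\ell^{\sc}=\mathrm{Lie}_\ell\Gamma_\ell^{\ss}$ (the composite isogeny-then-radical-quotient $\bG_\ell^{\sc}\to\bG_\ell^{\ss}$ has kernel whose $\Q_\ell$-points, intersected with the compact group $\Gamma_\ell^{\sc}$, form a pro-solvable group, since the kernel is an extension of a torus by a finite central group scheme). Chaining the two equalities gives $\mathrm{Lie}_\ell\Gamma_\ell=\mathrm{Lie}_\ell\Gamma_\ell^{\sc}$, which is the lemma.
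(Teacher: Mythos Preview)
Your approach is essentially the paper's: factor through $\Gamma_\ell^{\ss}$, use that $\Gamma_\ell\to\Gamma_\ell^{\ss}$ has pro-solvable kernel (the radical is a torus), and that $\Gamma_\ell^{\sc}\to\Gamma_\ell^{\ss}$ has abelian kernel and cokernel. Two small slips to clean up: the kernel of the composite $\bG_\ell^{\sc}\to\bG_\ell^{\ss}$ is simply finite central (there is no torus component, since $\bG_\ell^{\sc}$ is already semisimple), and contrary to your final paragraph $\Gamma_\ell^{\sc}\to\Gamma_\ell^{\ss}$ need not be surjective --- it is the abelian \emph{cokernel} (which you correctly identified earlier) that does the work, exactly as in the paper's proof.
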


\begin{proof}
Since the kernel of  
$$\pi_\ell^{\ss}:\Gamma_\ell\twoheadrightarrow\Gamma_\ell^{\ss}$$
 is pro-solvable, we obtain
$\mathrm{Lie}_\ell\Gamma_\ell=\mathrm{Lie}_\ell\Gamma_\ell^{\ss}$.
Since the kernel and cokernel of 
$$\pi_\ell^{\ss}\circ\pi_\ell^{\sc}:\Gamma_\ell^{\sc}\to\Gamma_\ell^{\ss}$$
 are abelian, we obtain
$\mathrm{Lie}_\ell\Gamma_\ell^{\ss}=\mathrm{Lie}_\ell\Gamma_\ell^{\sc}$. We are done.
\end{proof}

\begin{customcor}{\ref{cor}}
\textit{Let $\cG^{\der}$ be
a semisimple group scheme  over $\Z[\frac{1}{N}]$ for some $N$ whose generic fiber is $\mathbf{G}_\Q^{\der}$, where 
$\bG_\Q$ is in Theorem \ref{main}.
For all sufficiently large $\ell$, 
we have }
$$\mathrm{Lie}_\ell\bar\Gamma_\ell=\mathrm{Lie}_\ell\cG^{\der}(\F_\ell).$$
\end{customcor}

\begin{proof}
Since the mod $\ell$ representation $\phi_\ell$ ($\mathsection\ref{mod l}$) is the semisimplification 
of a mod $\ell$ reduction of $\Phi_\ell$ and $\mathrm{Lie}_\ell\bar\Gamma=\emptyset$ for any finite solvable group $\bar\Gamma$,  we obtain 
\begin{equation}\label{Lie1}
\mathrm{Lie}_\ell\bar\Gamma_\ell=\mathrm{Lie}_\ell\Gamma_\ell=\mathrm{Lie}_\ell\Gamma_\ell^{\sc}
\end{equation}
for all $\ell$ by Definition \ref{cf} and Lemma \ref{lem9}.
Since $\Gamma_\ell^{\sc}\cong\cG^{\sc}(\Z_\ell)$ for $\ell\gg0$ by Corollary \ref{newcor}, the kernel of reduction map $\cG^{\sc}(\Z_\ell)\twoheadrightarrow\cG^{\sc}(\F_\ell)$ is pro-solvable, and the kernel and cokernel of
$\pi^{\sc}_{\Z[\frac{1}{N'}]}:\cG^{\sc}(\F_\ell)\to \cG^{\der}(\F_\ell)$ (\ref{Zmap}) are abelian for $\ell\gg0$, we obtain 
\begin{equation}\label{Lie2}
\mathrm{Lie}_\ell\Gamma_\ell^{\sc}=\mathrm{Lie}_\ell\cG^{\sc}(\Z_\ell)=\mathrm{Lie}_\ell\cG^{\sc}(\F_\ell)=\mathrm{Lie}_\ell\cG^{\der}(\F_\ell).
\end{equation}
We are done by (\ref{Lie1}) and (\ref{Lie2}).
\end{proof}

\section*{Acknowledgments} 
The idea for constructing the $\Q$-form $\bG_\Q$ is inspired by Larsen-Pink \cite[$\mathsection5$]{LP95} and Pink \cite[$\mathsection6$]{Pi98}.
I would like to thank Michael Larsen and Gabor Wiese for their comments on an earlier preprint. 
I am grateful to the anonymous referees for pointing out \cite{DG62},\cite{Ma09},\cite{Vi96} to simplify $\mathsection\ref{s5}$
and many helpful comments and suggestions which have greatly improved the readability of the paper.

\bigskip

\noindent Department of Mathematics \\
VU University - Faculty of Sciences \\
De Boelelaan 1081a\\
1081 HV Amsterdam\\
The Netherlands\\

\noindent Email address: \texttt{pslnfq@gmail.com}
\end{document}